\newcommand{\myAd}{\mathcal{A}\!\!d}
\newcommand{\upL}{\mathsf{L}}
\newcommand{\upd}{\mathrm{d}}
\newcommand{\ds}{\displaystyle}
\newcommand{\bphi}{\bm{\phi}}
\newcommand{\bkappa}{\bm{\kappa}}
\newcommand{\bom}{\bm{\sigma}}
\newcommand{\euu}{\mathrm{E}_u}
\newcommand{\eux}{\mathrm{E}_x}
\newcommand{\eubu}{\mathrm{E}_\mbu}
\newcommand{\eual}{\mathrm{E}_{\displaystyle{u^\alpha}}}
\newcommand{\euka}{\mathrm{E}_\kappa}
\newcommand{\eumu}{\mathrm{E}_\mu}
\newcommand{\eunu}{\mathrm{E}_\nu}
\newcommand{\eubka}{\mathrm{E}_{\bkappa}}
\newcommand{\euet}{\mathrm{E}_\eta}
\newcommand{\ode}{O$\Delta$E }
\newcommand{\odes}{O$\Delta$Es }
\newcommand{\odese}{O$\Delta$Es}
\newcommand{\es}{\mathrm{S}}
\newcommand{\iup}{\iota}
\newcommand{\id}{\mathrm{id}}
\newcommand{\mba}{\mathbf{a}}
\newcommand{\mbu}{\mathbf{u}}
\newcommand{\mbv}{\mathbf{v}}
\newcommand{\mbw}{\mathbf{w}}
\newcommand{\man}{\mathcal{M}}
\newcommand{\hth}{h_{\theta}}
\newcommand{\co}{\cos\hth}
\newcommand{\si}{\sin\hth}
\newcommand{\eul}{\mathrm{E}_{\ell}}
\newcommand{\eut}{\mathrm{E}_{\hth}}
\begin{document}
\title{Moving Frames and Noether's  Finite Difference Conservation Laws I.}
\shorttitle{Noether's Finite Difference Conservation Laws I}
\author{  %
 {\sc 
 E. L. Mansfield\thanks{Corresponding author. Email: e.l.mansfield@kent.ac.uk},  
A. Rojo-Echebur\'{u}a\thanks{Email: arer2@kent.ac.uk} and 
P. E. Hydon}\thanks{Email: p.e.hydon@kent.ac.uk}\\ \vskip2pt
SMSAS, University of Kent, Canterbury, CT2 7FS, UK\\\vskip6pt
{\sc and}\\ \vskip6pt 
{\sc L. Peng}\thanks{Email: l.peng@aoni.waseda.jp}\\  \vskip2pt 
Waseda Institute for Advanced Study, Waseda University, Tokyo, 169-8050, Japan
}

\shortauthorlist{E.L. Mansfield \emph{et al.}}

\maketitle

\begin{abstract}
{We consider the calculation of Euler--Lagrange systems of ordinary difference equations, including the difference Noether's Theorem, in the light of the recently-developed calculus of difference invariants and discrete moving frames. We introduce the difference moving frame, a natural discrete moving frame that is adapted to difference equations by prolongation conditions.

For any Lagrangian that is invariant under a Lie group action on the space of dependent variables, we show that the Euler--Lagrange equations can be calculated directly in terms of the invariants of the group action. Furthermore, Noether's conservation laws can be written in terms of a difference moving frame and the invariants. We show that this form of the laws can significantly ease the problem of solving the Euler--Lagrange equations, and
we also show how to use a difference frame to integrate Lie group invariant difference equations. In this Part I, we illustrate the theory by applications to Lagrangians invariant under various solvable Lie groups. The theory is also generalized to deal with variational symmetries that do not leave the Lagrangian invariant. 

Apart from the study of systems that are inherently discrete, one significant application is to obtain geometric (variational) integrators that have finite difference approximations of the continuous conservation laws embedded \textit{a priori}. This is achieved by taking an invariant finite difference Lagrangian in which the discrete invariants have  the correct continuum limit to their smooth counterparts.
We show the calculations for a discretization of the Lagrangian for Euler's elastica, and compare our discrete solution to that of its smooth continuum limit. }
{Noether's Theorem, Finite Difference, Discrete Moving Frames}
\end{abstract}
\tableofcontents

\section{Introduction}

Conservation laws are among the most fundamental attributes of a given system of partial differential equations. {{They constrain all solutions and have a topological interpretation as cohomology classes in the restriction of the variational bicomplex to solutions of the given system}} \citep{Vin}. Their importance has led to a major theme in geometric integration that seeks to construct finite difference approximations that preserve conservation laws in some sense. Approaches include methods that preserve symplectic or multisymplectic structures, energy, and other conservation laws. Variational integrators exploit the structure inherent in variational problems, where conservation laws are associated with symmetries.

Much of physics is governed by variational principles, with symmetries leading to conservation laws and Bianchi identities. Noether's first (and best-known) theorem relates an $R$-dimensional Lie group of symmetries of a given variational problem to $R$ linearly independent conservation laws of the system of Euler--Lagrange differential equations \citep{Noether}. A second theorem in her seminal paper deals with variational problems that have gauge symmetries, and there is now a bridging theorem that includes all intermediate cases \citep{HyMa}. {{For a translation into English of Noether's paper and an historical survey of the context and impact of Noether's theorems and subsequent generalizations, see \citet{KS}.}} Each of these theorems has now been adapted to difference equations \citep{Dorod,HyMa,Hydon}.

Noether's (first) Theorem may be used to derive conservation laws from a finite-dimensional Lie group of variational point symmetries that act on the space of independent and dependent variables. One can work in terms of the given variables, but for complex problems it is usually more efficient to factor out the Lie group action from the outset and work entirely in terms of invariants and the equivariant frame. Having solved a simplified problem for the invariants, one can then construct the solution to the original problem. This divide-and-conquer approach has recently been achieved, for differential equations, by using moving frame theory. These results were presented in \citet{GonMan} for all three inequivalent $SL(2)$ actions in the complex plane and in \citet{GonMan2} for the standard $SE(3)$ action.
Finally, in \citet{GonMan3} the calculations were extended to cases where the independent variables are not invariant under the group action, which is the case for many physically important models.

The theory and applications of Lie group based moving frames are now well established, and provide an invariant calculus to study differential systems that are either invariant or equivariant under the action of a Lie group.
Associated with the name of \'Elie \citet{Cartan},
who used \textit{rep\`eres mobile} to solve equivalence
problems in differential geometry, the ideas go back to earlier works, for example by \citet{cotton} and \citet{Darboux}. 

From the point of view of symbolic computation, a breakthrough
in the understanding of Cartan's methods for differential systems came in a series of papers
by \citet{FO2,FO1}, \citet{Omulti,Ojoint}, \citet{hubertAA,hubertAD,hubertAC} and \citet{hubertB,hubertA}, 
which provide a coherent, rigorous and constructive moving
frame method. The resulting differential invariant calculus is the subject of the textbook by \citet{Mansfield:2010aa}.
Applications include integration of 
Lie group invariant differential equations \citep{Mansfield:2010aa}, the calculus of variations
and Noether's Theorem
\citep[see for example][]{GonMan,GonMan2,kogan}, and integrable systems \citep[for example][]{ManKamp,M2,M1,M3}. 

Moving frame theory assumes that the Lie group acts on a continuous space. For spaces in which some variables are discrete, the theory must be modified.
The first results for the computation of discrete invariants using group-based moving frames were given by Olver (who called them 
joint invariants) in \citet{Ojoint};
modern applications to date include computer vision  \citep{Ogen} and numerical schemes
for systems with a Lie symmetry \citep{kimA,kimB,kimC,ManHy,RebVal}.
While moving frames for discrete applications as formulated
by Olver do give generating sets of discrete invariants, the recursion formulae for differential invariants
(which were so successful for the application
of moving frames to  calculus-based results) do not generalize well to joint invariants. In particular,
joint invariants do not seem to have computationally useful recursion formulae under the shift operator (which is defined below).
To overcome this  problem, \cite*{MBW} introduced the notion of a \textit{discrete moving frame}, which is essentially 
a sequence of frames. In that paper  discrete recursion formulae were proven for small computable generating sets of invariants,  called the \textit{discrete Maurer--Cartan invariants},
and  their \textit{syzygies} (that is,  their recursion relations) were investigated.  

Difference equations arise as models in their own right, not just as approximations to differential equations. Some have conservation laws; for instance, discrete integrable systems have infinite hierarchies of conservation laws \citep{MWX}. However, the geometry underlying finite difference conservation laws is much less well-understood than its counterpart for differential equations. Discrete moving frames are widely applicable to discrete spaces, but they do not incorporate the prolongation structure that is inherent in difference equations. This paper describes the necessary modification to incorporate this structure, difference moving frame theory, and applies it to ordinary difference equations with variational symmetries. This makes it possible to factor out the symmetries and write the Euler--Lagrange equations entirely in terms of invariant variables and the conservation laws  in terms of the invariants and the frame. 

We consider systems of ordinary difference equations (\odese) whose independent variable is $n\in\mathbb{Z}$, with dependent variables $\mathbf{u}=(u^1,\dots, u^q)\in \mathbb{R}^q$. A system of \odes is a given system of relations between the quantities $u^\alpha(n+j)$ for a finite set of integers $j$. The system holds for all $n$ in a given connected domain (interval), {{which may or may not be finite,}} so it is helpful to suppress $n$ and use the shorthand $u^\alpha_j$ for $u^\alpha(n+j)$ and $\mbu_j$ for $\mbu(n+j)$.

The (forward) shift operator $\es$ acts on functions of $n$ as follows:
\[
\es:n\mapsto n+1,\qquad \es:f(n)\mapsto f(n+1),
\]
for all functions $f$ whose domain includes $n$ and $n+1$. In particular, $$\es:u^\alpha_j\mapsto u^\alpha_{j+1}$$
on any domain where both of these quantities are defined.
The forward difference operator is $\es-\id$, where $\id$ is the identity operator:
\[
\id:n\mapsto n,\qquad \id:f(n)\mapsto f(n),\qquad \id:u^\alpha_j\mapsto u^\alpha_j.
\]

A variational system of \odes is obtained by extremizing a given functional,
$\mathcal{L}[\mbu]=\displaystyle{\sum_n}\, \upL(n,\mbu_0,\dots, \mbu_J)$, where the sum is taken over all $n$ in a given interval{{, which need not necessarily be bounded}}; the Lagrangian $\upL$ depends on only a finite number of arguments. The extrema are given by the condition
$$\displaystyle\frac{{\rm d}}{{\rm d}\epsilon}\Big\vert_{\epsilon=0} \sum_n \upL(n,\mbu_0+\epsilon \mbw_0,\dots, \mbu_J+\epsilon \mbw_J) =0$$
for all functions $\textbf{w}:\mathbb{Z}\rightarrow\mathbb{R}^q$. It is well known that the extrema satisfy the following system of Euler--Lagrange (difference) equations \citep{HyMan,Kup}:
\begin{equation}\label{basicEL} \eual(\upL):=\sum_{j=0}^J  \es_{-{j}}\!\left(\frac{\partial\:\! \upL}{\partial u^\alpha_j}\right)=0{{,\qquad\text{where}\ \es_{-j}=(\es^{-1})^j}}.\end{equation}
Each $\eual(L)$ depends only on $n$ and $\mbu_{-J},\dots,\mbu_J$, so the Euler--Lagrange equations are of order at most $2J$. In the following, we develop an invariantized version of
these equations, together with invariant conservation laws that stem from Noether's Theorem.

In Section \ref{prolspace}, the natural geometric setting for difference equations is discussed. Just as a given differential equation can be regarded as a subspace of an appropriate jet space, a given difference equation is a subspace of an appropriate difference prolongation space.

 Section \ref{introdiffcalc} is a brief review of the difference calculus of variations. The methods that we will develop emulate these calculations as far as possible, but using the invariant difference calculus. In Section \ref{intromovfram}, we give a short overview of continuous and discrete moving frames, and introduce the difference moving frame, which gives the geometric framework for our results. A running example is used from here on to illustrate how the theory is applied.
 
 In Section \ref{inveul}, we show how a difference moving frame can be used to calculate the difference Euler--Lagrange equations directly in terms of the invariants. This calculation yields boundary terms that can be transformed into the conservation laws, which require both invariants and the frame for their expression. Section \ref{moregrpsec} introduces the adjoint representation of the frame, enabling us (in Section \ref{Conlaws}) to state and prove key results on the difference conservation laws that arise via the difference analogue of Noether's Theorem.
 
Section \ref{SolveSec} shows how the difference moving frame may be used to integrate a difference system that is invariant under a Lie group action. Further, we show how the conservation laws and the frame together may be used to ease the integration process, whether or not one can solve for the frame.

In Section \ref{Divgen}, we generalize the difference frame version of Noether theory to include variational symmetries that do not leave the Lagrangian invariant. (Their counterparts for differential equations are sometimes called divergence symmetries.) Consequently, difference moving frames may be used to solve or simplify ordinary difference systems with any finite-dimensional Lie group of variational symmetries. This is illustrated in Section \ref{Exgen}.

The paper concludes with another use of difference moving frames: to create symmetry-preserving numerical approximations. Section \ref{Elastica} illustrates this for the Euler elastica, which is invariant under the Euclidean group action. Smooth Lagrangians which are invariant under the Euclidean action can be expressed in terms of the Euclidean curvature and arc length, and the Noether laws for these are the conservation of linear and angular momenta. {We demonstrate for this example that by taking a difference frame which converges, in some sense, to a smooth frame, then we obtain simultaneous convergence of the Lagrangian, the Euler--Lagrange system and all three conservation laws.   The specific difference Lagrangian we consider is a discrete analogue of that for Euler's elastica, and we show how our results compare with that of the smooth Euler--Lagrange equation, solved using the analogous  theory of smooth moving frames. In effect, we show how the design of the approximate Lagrangian can yield a discrete Euler--Lagrange system which is a variational integrator and which respects difference analogues of all three conservation laws.} 



\section{Difference prolongation spaces}\label{prolspace}

A given differential equation can be expressed geometrically as a {{variety}} in an appropriate jet space whose coordinates are the independent and dependent variables, together with sufficiently many derivatives of the dependent variables \citep{Opurp}. There is an analogous geometric structure for difference equations, but jet spaces and derivatives are replaced by difference prolongation spaces and shifts respectively. {{While equations may have singularities, techniques described in this paper are valid only away from these, and hence we do not consider such points here.}}

The difference prolongation spaces are obtained from the space of independent and dependent variables, $\mathbb{Z}\times\mathbb{R}^q$. Over each base point $n\in\mathbb{Z}$, the dependent variables take values in a continuous fibre $U\subset \mathbb{R}^q$, which has the coordinates $\mbu=(u^1,\dots,u^q)$. For simplicity, we shall assume that all structures on each fibre are identical; the necessary modifications when this does not hold are obvious but can be messy.

It is useful to regard $n$ as representing a given (arbitrary) base point and to prolong the fibre over $n$ to include the values of $\mbu$ on other fibres. For all sequences $\big(\mbu(m)\big)_{\!m\,\in\,\mathbb{Z}}$, let $\mbu_j$ denote $\mbu(n+j)$. Then the fibre over $n$ is $P^{(0,0)}_n(U)\simeq U$, with coordinates $\mbu_0$. The first forward prolongation space over $n$ is $P^{(0,1)}_n(U)\simeq U\times U$ with coordinates $z=(\mbu_0,\mbu_1)$. Similarly, the $J^{\mathrm{th}}$ forward prolongation space over $n$ is the product space $P^{(0,J)}_n(U)\simeq U\times\cdots\times U$ ($J+1$ copies) with coordinates $z=(\mbu_0,\mbu_1,\dots,\mbu_J)$. More generally, one can include both forward and backward shifts, obtaining the prolongation spaces
$P^{(J_0,J)}_n(U)\simeq U\times\cdots\times U$ ($J-J_{0}+1$ copies) with coordinates $z=(\mbu_{J_{0}},\dots,\mbu_J)$, where $J_{0}\leq 0$ and $J\geq 0$. 

Prolongation spaces are equipped with an ordering that enables sequences to be represented as points in the appropriate prolongation space. Consequently, a given difference equation, $\mathcal{A}(n,\mbu_{J_{0}},\dots,\mbu_J)=0$, corresponds to a variety in any prolongation space that contains the submanifold $P^{(J_0,J)}_n(U)$. This makes it possible to apply continuous methods to difference equations  {{(locally, away from singularities)}}.

Every prolongation space $P^{(J_0,J)}_n(U)$ is a {{submanifold}} of the \textit{total prolongation space over $n$}, $P^{(-\infty,\infty)}_n(U)$ with coordinates $z=(\dots,\mbu_{-2},\mbu_{-1},\mbu_0,\mbu_1,\mbu_2,\dots)$. As $n$ is a free variable, the same structures are repeated over each $n$. This yields the natural map
\[
\pi:P^{(-\infty,\infty)}_n(U)\longrightarrow P^{(-\infty,\infty)}_{n+1}(U),\qquad \pi:z\mapsto \hat{z};
\]
here the coordinates on $P^{(-\infty,\infty)}_{n+1}(U)$ are distinguished by a caret, so $\widehat{\mbu}_j$ denotes $\mbu(n+1+j)$ {{for all sequences $\big(\mbu(m)\big)_{\!m\,\in\,\mathbb{Z}}$. Consequently, each $\widehat{\mbu}_j$ is represented in $P^{(-\infty,\infty)}_n(U)$ by $\mbu_{j+1}=\es\mbu_j$, where we regard the shift $\es$ as an operator on $P^{(-\infty,\infty)}_n(U)$. Similarly, variables on any prolongation space over a point $m$ may be represented as equivalent variables in $P^{(-\infty,\infty)}_n(U)$ by applying the $(m-n)^{\mathrm{th}}$ power of the shift operator $\es$. We denote the $j^{\mathrm{th}}$ power of $\es$ by $\es_j$, so that $\mbu_j=\es_j\mbu_0$ for each $j\in\mathbb{Z}$.}}

For \odese, it is enough to use the restriction of $\es$ to finite prolongation spaces. To accommodate difference equations on a finite or semi-infinite interval, we add the constraint that $\mbu_{j+1}=\es \mbu_j$ is defined only if $n+j$ and $n+1+j$ are in the interval.

In the remainder of the paper, we treat $n$ as fixed, using powers of the shift operator $\es$ to represent structures on prolongation spaces over any base point $m$ as equivalent structures on all sufficiently large prolongation spaces over $n$. In \S\ref{intromovfram}, we will use this property to construct difference moving frames.

{Throughout, we work  formally, without considering convergence of sums or integrals.}

\section{The difference variational calculus}\label{introdiffcalc}

Consider a functional of the form
\begin{equation}\label{basicLag}\mathcal{L}[\mbu]=\sum \upL(n,\mbu_0,\mbu_1, \dots, \mbu_J),\end{equation}
where $\mbu_j=(u_j^1,\dots, u_j^q)\in \mathbb{R}^q$. Here and henceforth, the unadorned summation symbol denotes summation over $n$; the range of this summation is a given interval in $\mathbb{Z}$, which can be unbounded.  For sums over all other variables, we will use the Einstein summation convention as far as possible, to avoid a proliferation of summation symbols.
The \textit{variation} of $\mathcal{L}[\mbu]$ in the direction $\mbw$ is
\begin{equation}\label{varL}
\frac{{\rm d}}{{\rm d} \epsilon}\Big\vert_{\epsilon=0} \mathcal{L}[\mbu+\epsilon \mbw]=\sum  w^{\alpha}_j\,\displaystyle\frac{\partial\:\! \upL}{\partial u^{\alpha}_j }\,.
\end{equation}
Summing by parts, using the identity
\begin{equation}\label{sumpartdef} (\es_jf)\,g = f\, \es_{-j}g+\left(\es_j-\id\right)(f\, \es_{-j}g)\end{equation}
and pulling out the factor $(\es -\id)$ from $(\es_j-\id)$, we obtain
\begin{equation}\label{firstVar}
w^{\alpha}_j\,\displaystyle\frac{\partial\:\! \upL}{\partial u^{\alpha}_j }=
w^{\alpha}_0\,\es_{-{j}} \displaystyle\frac{\partial\:\! \upL}{\partial u^{\alpha}_{j} }+(\es -\id)A_\mbu(n,\mbw),
\end{equation}
where
\[
A_\mbu(n,\mbw)=\sum_{j=1}^J\sum_{l=0}^{j-1}\,\es_{\,l}\!\left\{
w^{\alpha}_0\,\es_{-{j}} \displaystyle\frac{\partial\:\! \upL}{\partial u^{\alpha}_{j} }\right\}.
\]
The sum over $n$ of the differences $(\es -\id)A_\mbu$ telescopes, contributing only boundary terms to the variation. So for all variations to be zero,  $\mbu$ must solve the Euler--Lagrange system of difference equations
\begin{equation}\label{ELeqnOne}
\eual(\upL) := \es_{-{j}} \frac{\partial\:\! \upL}{\partial u^{\alpha}_{{j}} }=0,\qquad  \alpha=1,\dots, q.
\end{equation}
Moreover, the boundary terms yield natural boundary conditions that must be satisfied if $\mbu$ is not fully constrained at the boundary.

\begin{example}\label{simpleExOne}    
As a running example, we will consider a functional with two dependent variables, $\mbu=(x,u)$, that is of the form 
$$\mathcal{L}[x,u] = \sum \upL(x_0,u_0,x_{1},u_{1},u_{2}). $$  
Setting $\mbw=(w^x, w^u)$, the variation is
$$\frac{{\rm d}}{{\rm d}\epsilon}\Big\vert_{\epsilon=0} \mathcal{L}[x+\epsilon w^x,u+\epsilon w^u]=\sum \big\{ w^x_0\,\eux(\upL) +w^u_0\,\euu(\upL) + (\es -\id)A_\mbu(n,\mbw)\big\}.$$
There are two Euler--Lagrange equations, one for each dependent variable:
$$ \eux(\upL):=\frac{\partial\:\! \upL}{\partial x_0}+\es_{-1}\frac{\partial\:\! \upL}{\partial x_{1}}\,=0,\qquad\euu(\upL):=\frac{\partial\:\! \upL}{\partial u_0}+\es_{-1}\frac{\partial\:\! \upL}{\partial u_{1}}
+\es_{-2}\frac{\partial\:\! \upL}{\partial u_{2}}=0.$$
The remaining terms come from $A_\mbu(n,\mbw)=A^x+A^u$, where 
$$A^x=w^x_0\,\es_{-1}\frac{\partial\:\! \upL}{\partial x_{1}}\,,\qquad A^u=w^u_0\,\es_{-1}\frac{\partial\:\! \upL}{\partial u_{1}} + \left(\es+\id\right)\left(w^u_0\,\es_{-2}\frac{\partial\:\! \upL}{\partial u_{2}}\,\right).
$$
\end{example}

So far, we have considered all variations $\mbw$, without reference to the Lagrangian. However, variations $\bphi$ that do not change the functional $\mathcal{L}[\mbu]$ are of special importance. They leave the Lagrangian $\upL$ invariant, up to a total difference term.

\begin{definition}\label{infpart1}
Suppose that a non-zero function $\bphi=(\phi^1(n,\mbu),\dots,\phi^q(n,\mbu))^T$ satisfies
\begin{equation}\label{invLagOne}\phi^{\alpha}_{j}(n,\mbu)\, \frac{\partial\:\! \upL}{\partial u^{\alpha}_{j} } = (\es -\id )B(n,\mbu),\qquad\text{where}\quad\phi^\alpha_j=\es_j\phi^\alpha_0,\end{equation}
for some $B(n,\mbu)$ (which may be zero).
Then the Lagrangian $\upL$ is said to have a \emph{variational symmetry}\footnote{{This is shorthand for a one-parameter local Lie group of variational symmetries, see \citet{Obook}.}} \emph{with characteristic} $\bphi$. The Lagrangian is invariant under this symmetry if $B=0$. \end{definition}

The relationship between variational symmetries and characteristics will be made clear in \S \ref{moregrpsec}. The next theorem explains why these symmetries are important.
\begin{theorem}[Difference Noether's Theorem]\label{Noether1} Suppose that a Lagrangian $\upL$ has a variational symmetry with characteristic $\bphi\ne \mathbf{0}$.
If $\mbu=\bar{\mbu}$ is a solution of the Euler--Lagrange system for $\upL$ then
\begin{equation} \label{conLawBA}\big(\,(\es-\id)\! \left\{A_\mbu(n,\bphi)-B(n,\mbu)\right\}\big)\big\vert_{\mbu=\bar{\mbu}}=0.
\end{equation} 
\end{theorem}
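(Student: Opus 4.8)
The plan is to combine the summation-by-parts identity \eqref{firstVar} with the variational symmetry condition \eqref{invLagOne}, specializing the arbitrary variation $\mbw$ to the characteristic $\bphi$. Since \eqref{firstVar} is a purely algebraic identity in the components $w^\alpha_j$ and the partial derivatives $\partial\upL/\partial u^\alpha_j$ (with the understood summation over $\alpha$ and $j$), it remains valid under this substitution, even though $\bphi$ depends on $\mbu$.

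First I would set $\mbw=\bphi$ in \eqref{firstVar}, giving
$$\phi^\alpha_j\,\frac{\partial\:\!\upL}{\partial u^\alpha_j} = \phi^\alpha_0\,\es_{-j}\frac{\partial\:\!\upL}{\partial u^\alpha_j} + (\es-\id)A_\mbu(n,\bphi),$$
and recognize, via the definition of the Euler--Lagrange operator \eqref{ELeqnOne}, that the first term on the right is $\phi^\alpha_0\,\eual(\upL)$. Next I would invoke the variational symmetry hypothesis \eqref{invLagOne} to replace the left-hand side by $(\es-\id)B(n,\mbu)$. Rearranging then produces the identity
$$\phi^\alpha_0\,\eual(\upL) = -(\es-\id)\{A_\mbu(n,\bphi)-B(n,\mbu)\},$$
which holds for every $\mbu$, not merely for solutions. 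Restricting to a solution $\mbu=\bar{\mbu}$ of the Euler--Lagrange system annihilates every $\eual(\upL)$, so the left-hand side vanishes and \eqref{conLawBA} follows at once.

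I expect no serious obstacle here: the argument is a short chain of substitutions once the two ingredients are assembled. The only delicate point is the legitimacy of inserting the $\mbu$-dependent characteristic $\bphi$ into an identity that was originally derived for a free test direction $\mbw$; this is justified because \eqref{firstVar} holds formally in the symbols $w^\alpha_j$ and therefore survives any specialization of them. It is worth emphasizing that the identity displayed above, \emph{before} restriction, is in fact the more informative statement, since it exhibits the combination $\phi^\alpha_0\,\eual(\upL)$ as an exact total difference; the conservation law \eqref{conLawBA} is simply its on-shell consequence.
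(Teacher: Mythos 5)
Your argument is correct and is essentially identical to the paper's proof, which likewise substitutes $\bphi$ for $\mbw$ in \eqref{firstVar}, identifies the resulting term $\phi^{\alpha}_0\,\eual(\upL)$, and equates the left-hand side with $(\es-\id)B(n,\mbu)$ via \eqref{invLagOne} before restricting to solutions. Your remark that the off-shell identity exhibiting $\phi^{\alpha}_0\,\eual(\upL)$ as an exact total difference is the more informative statement is a fair observation, but it does not change the route taken.
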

\begin{proof} Substituting $\bphi$ for $\mbw$ in (\ref{firstVar}) gives
$$\phi^{\alpha}(n,\mbu)\eual(\upL) + (\es-\id) A_\mbu(n,\bphi)=
\phi^{\alpha}_{j}(n,\mbu)\, \frac{\partial\:\! \upL}{\partial u^{\alpha}_{j} } = (\es -\id )B(n,\mbu).$$
The result follows immediately.
\end{proof}

The expression in Equation (\ref{conLawBA}) is a {\em conservation law} for the Euler--Lagrange system. As there is only one independent variable, the expression in braces is a first integral, so every solution of the Euler--Lagrange system satisfies
\[
\big\{A_\mbu(n,\bphi)-B(n,\mbu)\big\}\big\vert_{\mbu=\bar{\mbu}}=c,
\]
where $c$ is a constant.

\begin{table}
	\centering{
		\begin{tabular}{|p{0.5cm}|p{0.4cm}| p{9cm} |}
			\hline
			\vspace*{-6pt}$\phi^x$&\vspace*{-6pt}$\phi^u$&\vspace*{-6pt}First integral: $A_\mbu(n,\bphi)$\\[2pt]
			\hline 
			$3x$&$u$&$\phantom{\Bigg\vert}3 x_{0}\,\es_{-1}\ds\frac{\partial\:\! \upL}{\partial x_{1}} + u_{0}\,\es_{-1}\ds\frac{\partial\:\! \upL}{\partial u_{1}}$
			$+(\es+\id)\left(u_0\,\es_{-2}\ds\frac{\partial\:\! \upL}{\partial u_{2}}\right)$\\[15pt]
			\hline
			1&0&$\phantom{\Bigg\vert}\es_{-1}\ds\frac{\partial\:\! \upL}{\partial x_{1}}$
			\\ [12pt]\hline
			0&1&$\phantom{\Bigg\vert}\es_{-1}\ds\frac{\partial\:\! \upL}{\partial u_{1}} + (\es+\id) \es_{-2}\ds\frac{\partial\:\! \upL}{\partial u_{2}}\equiv\es_{-1}\ds\frac{\partial\:\! \upL}{\partial u_{2}}-\ds\frac{\partial\:\! \upL}{\partial u_{0}}$\\ \hline
		\end{tabular}
		\caption{Characteristics and first integrals for the Lagrangian (\ref{babylagrangian}).}\label{tableEG}}
\end{table}
\smallskip

\begin{example}(Example \ref{simpleExOne} cont.)\  
For instance, the Lagrangian
\begin{equation}\label{babylagrangian}
\upL(x_0,u_0,x_{1},u_{1},u_{2})= \frac{x_1-x_0}{\{(u_2-u_1)(u_1-u_0)\}^{3/2}}
\end{equation}
has three variational symmetries, all with $B=0$. Table \ref{tableEG} lists the corresponding first integrals for every Lagrangian $\upL(x_0,u_0,x_{1},u_{1},u_{2})$ that has these symmetries.

We will see in the sequel that the first symmetry arises from the invariance of the Lagrangian under the scalings $(x,u)\mapsto (\lambda^3 x, \lambda u)$, for $\lambda\in \mathbb{R}^{+}$, the second arises from invariance under translations in $x$, that is, $x\mapsto x+a$ for all $a\in\mathbb{R}$, and the third arises from invariance under translations in $u$, namely $u\mapsto u+b,\ b\in\mathbb{R}$.

Despite having three first integrals for the system of Euler--Lagrange equations, the expressions are unwieldy for the given Lagrangian \eqref{babylagrangian} and the system remains difficult to solve. We will show that the necessary insight into the solution set is obtained by using coordinates that are adapted to the three symmetries.
So for the rest of the running example, we will consider only those Lagrangians $\upL(x_0,u_0,x_{1},u_{1},u_{2})$ that have the same three symmetries. A major advantage of using symmetry-adapted coordinates is that one can deal with all such Lagrangians together.
\end{example}

To obtain adapted coordinates for general variational problems, we need the machinery of moving frames and, in particular, difference moving frames.

\section{Moving frames}\label{intromovfram}

This section outlines the basic theory of the moving frame and its extension to the discrete moving frame on an appropriate prolongation space, as illustrated by the running example. The discrete moving frame is essentially a sequence of moving frames. We introduce the \emph{difference moving frame}, which is equivalent to a discrete moving frame (on a prolongation space) that is subject to further prolongation conditions. It is analogous to the continuous moving frame on
a given jet space, adapted to difference equations that are invariant or equivariant under a Lie group action.

\subsection{Lie group actions, invariants and moving frames}

Given a Lie group $G$ that acts on a manifold $M$, the group action of $G$ on $M$ is a map 
\begin{equation}
G\times M \rightarrow M, \quad (g,z)\mapsto g\cdot z.
\end{equation}
For a left action, 
$$
g_1\cdot(g_2\cdot z) = (g_1g_2)\cdot z;
$$  
similarly, for a right action
$$
g_1\cdot (g_2\cdot z)=(g_2g_1)\cdot z.
$$
Given a left action $(g,z)\mapsto g\cdot z$, it follows that $(g,z)\mapsto g^{-1}\cdot z$  is a right action.
In practice both right and left actions occur, and the ease of the calculations can differ considerably, depending on the choice. In a theoretical development, however,  only one is needed, so we restrict ourselves in the following to left actions.
\smallskip 

\noindent\textbf{Remark on notation.} For ease of exposition, a tilde will be used to denote a transformed variable; for instance, $g\cdot z=\tilde{z}$. In this notation, $g$ is implicit. \smallskip

\begin{figure}[t]
	\begin{center}
		\includegraphics[trim=0 340 0 0,clip,scale=0.65]{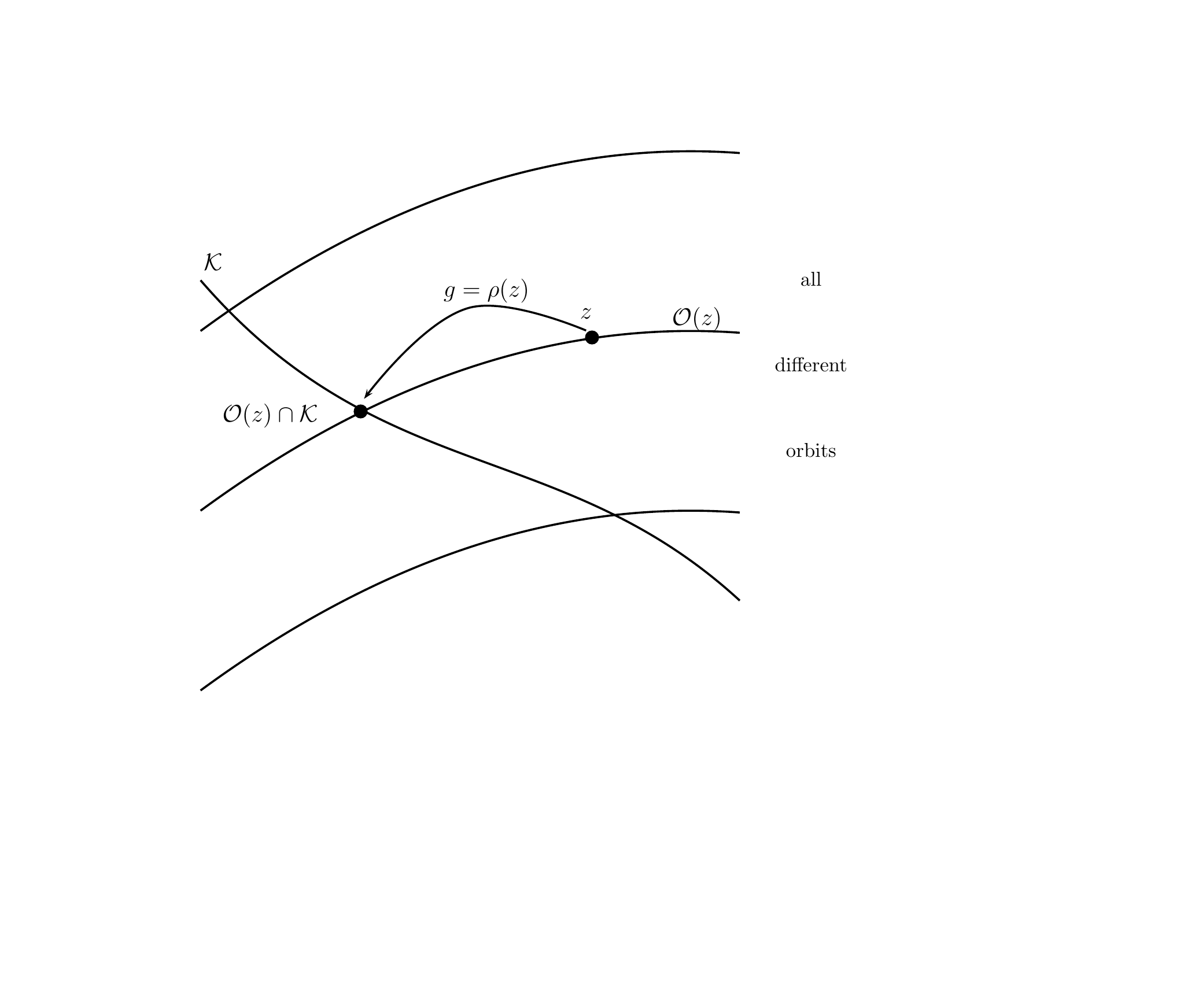}
		\caption{A moving frame defined by a cross-section}
		\label{fig:movingframe}
	\end{center}
\end{figure}

\noindent\textbf{Important assumption.}
In the following outline of moving frame theory, the action is assumed to be free and regular on $M$; see \citet{Mansfield:2010aa} for details. (If it is not, replace $M$ by a domain $\man$ on which the action is free and regular.) Consequently, there exists a cross section $\mathcal{K}\subset M$ that is transverse to the orbits $\mathcal{O}(z)$ and, for each $z\in M$, the set $\mathcal{K}\cap \mathcal{O}(z)$ has just one element, the projection of $z$ onto $\mathcal{K}$, as shown in Figure \ref{fig:movingframe}.

Using the  cross-section $\mathcal{K}$, a moving frame for the group action on {{a neighbourhood $\mathcal{U}\subset M$ of $z$}} can be defined as follows.
\begin{definition}[Moving Frame]
Given a smooth Lie group action $G\times M\rightarrow M$, a moving frame is an equivariant map $\rho: \mathcal{U}\subset M\rightarrow G$. Here $\mathcal{U}$ is called the domain of the frame.
\end{definition}
A left equivariant map satisfies $\rho(g\cdot z) = g\rho(z)$, and a right equivariant map satisfies $\rho(g\cdot z)=\rho(z)g^{-1}$. The frame is called left or right accordingly. 

In order to find the frame, let the cross-section $\mathcal{K}$ be given by a system of equations $\psi_r(z)=0$, for $r=1,\ldots,R,$ where $R$ is the dimension of the group $G$. One then solves the
so-called normalization equations,
\begin{equation}\label{frameEqA}
\psi_r(g\cdot z) = 0, \qquad r=1,\ldots, R,
\end{equation}
for $g$ as a function of $z$.
The solution is the group element $g=\rho(z)$ that maps $z$ to its projection on $\mathcal{K}$ (see Figure \ref{fig:movingframe}).
In other words, the frame $\rho$ satisfies
$$
\psi_r(\rho(z)\cdot z)=0, \qquad r=1,\ldots, R. 
$$
The conditions on the action above are those for the Implicit Function Theorem to hold \citet{Hirsch}, so the solution $\rho$ is unique. A consequence of uniqueness is that
$$
\rho(g\cdot z)=\rho(z) g^{-1},
$$
that is, the frame is  \textit{right equivariant}, as both $\rho(g\cdot z) $ and $ \rho(z) g^{-1}$ solve the equation $\psi_r\left(\rho(g\cdot z)\cdot \left(g\cdot z\right)\right)=0$.
A left frame, which satisfies $ \rho(g\cdot z)=g\rho(z)$, is obtained by taking the inverse of a right frame. In practice, the ease of calculation can differ considerably depending on the choice of parity.

\begin{example}(Example \ref{simpleExOne} cont.)\  
It is straightforward to check that the Lagrangian \eqref{babylagrangian} is invariant under the scaling and translation group action on $\mathbb{R}^2$ given by
$$ (x,u)\mapsto (\lambda^3 x +a , \lambda u +b),\qquad \lambda\in\mathbb{R}^+,\  a,b\in\mathbb{R};$$
the Lie group is the semi-direct product, $\mathbb{R}^+\ltimes\mathbb{R}^2$.
The action is not free on the space $\mathbb{R}^2$ over $n$, which has coordinates $(x_0,u_0)$. To achieve freeness,  one needs to work in a higher-dimensional continuous space. For instance, the action is free on the first forward prolongation space $P_n^{(0,1)}(\mathbb{R}^2)$, which has coordinates $(x_0,u_0,x_1,u_1)$. On this prolongation space, the action is given by
$$ \left(x_0,u_0,x_1,u_1\right)\mapsto \left(\lambda^3 x_0 +a , \lambda u_0 +b,\lambda^3 x_1 + a, \lambda u_1+b\right).$$
If we choose the normalization equations to be 
$g\cdot (x_0, u_0,x_1,u_1)=(0,0,*,1)$, where $*$ is unspecified, the values of the parameters for the frame group element are\footnote{{As $\lambda>0$, this is valid throughout the half-space $\mathcal{U}=\{(x_0, u_0,x_1,u_1)\in P_n^{(0,1)}(\mathbb{R}^2):u_1>u_0\}$. For the half-space $u_1<u_0$, the normalization $g\cdot (x_0, u_0,x_1,u_1)=(0,0,*,-1)$ would be appropriate.}}
$$\lambda=\frac{1}{u_1-u_0}\,, \qquad a=-\,\frac{x_0}{(u_1-u_0)^3}\,,\qquad b=-\,\frac{u_0}{u_1-u_0}\,.$$
A standard matrix representation for the generic group element is 
$$g(\lambda,a,b) =  \left(\begin{array}{ccc} \lambda^3 &0&a\\ 0&\lambda & b\\ 0&0&1\end{array}\right),\qquad\text{with}\quad \left(\begin{array}{c} g\cdot x\\ g\cdot u\\ 1\end{array}\right)=g(\lambda,a,b)\left(\begin{array}{c} x\\ u\\ 1\end{array}\right).$$
With this representation, which is faithful, the frame is 
$$\rho(x_0,u_0,x_1,u_1) =\left(\begin{array}{ccc}\ds\frac1{(u_1-u_0)^3} &0&-\,\ds\frac{x_0}{(u_1-u_0)^3}\\ 0&\ds\frac1{u_1-u_0} &  -\,\ds\frac{u_0}{u_1-u_0}\\ 0&0&1\end{array}\right).$$
The equivariance  is easily checked in matrix form: for $z=(x_0,u_0,x_1,u_1)$, 
$$\begin{array}{rcl}\rho(g\cdot z)
&=& \left(\begin{array}{ccc}\ds\frac1{\lambda^3(u_1-u_0)^3} &0&-\,\ds\frac{x_0+a/\lambda^3}{(u_1-u_0)^3}\\[10pt] 0&\ds\frac1{\lambda(u_1-u_0)} &  -\,\ds\frac{u_0+b/\lambda}{u_1-u_0}\\[10pt] 0&0&1\end{array}\right)\\[12pt]
&=&\left(\begin{array}{ccc}\ds\frac1{(u_1-u_0)^3} &0&-\,\ds\frac{x_0}{(u_1-u_0)^3}\\[12pt] 0&\ds\frac1{u_1-u_0} &  -\,\ds\frac{u_0}{u_1-u_0}\\[10pt] 0&0&1\end{array}\right)
\left(\begin{array}{ccc} \lambda^{-3}&0&-\,\ds\frac{a}{\lambda^3}\\[10pt] 0&\lambda^{-1} & -\,\ds\frac{b}{\lambda}\\[10pt] 0&0&1\end{array}\right)\\[12pt]
&=& \rho(z)  g(\lambda,a,b)^{-1}.\end{array} $$
\end{example}

Returning to the general theory, the requirement that frames are equivariant enables one to obtain invariants of the group action.

\begin{lemma}[Normalized Invariants]
Given a left or right action $G\times M \rightarrow M$ and a right frame $\rho$, then
 $\iup(z)=\rho(z)\cdot z$, for $z$ in the domain of the frame $\rho$, is invariant under the group action.
\end{lemma}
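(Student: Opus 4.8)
The plan is to verify the invariance assertion $\iup(g\cdot z)=\iup(z)$ directly, exploiting the one property of $\rho$ that is already in hand: right equivariance. Since $\rho$ is a right frame it satisfies $\rho(g\cdot z)=\rho(z)\,g^{-1}$ for every $g\in G$ and every $z$ (with $g\cdot z$) in the domain $\mathcal{U}$ of the frame. I would take this equivariance identity as the sole input and reduce everything to the defining composition rule of the action, so that no information about the cross-section $\mathcal{K}$ or the explicit form of $\rho$ is needed.

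The computation itself is a single substitution. Writing out $\iup$ at the transformed point and inserting right equivariance gives
\begin{equation*}
\iup(g\cdot z)=\rho(g\cdot z)\cdot(g\cdot z)=\bigl(\rho(z)\,g^{-1}\bigr)\cdot(g\cdot z).
\end{equation*}
For a left action the composition rule $h\cdot(k\cdot z)=(hk)\cdot z$ applies with $h=\rho(z)\,g^{-1}$ and $k=g$, so the right-hand side collapses to $\bigl(\rho(z)\,g^{-1}g\bigr)\cdot z=\rho(z)\cdot z=\iup(z)$, which is exactly the claimed invariance. If instead one works with a right action, the parity of the frame must be matched accordingly, since the composition rule reverses the order of the group factors; this is precisely why the theoretical development fixes the left-action convention from the outset, keeping the cancellation of $g^{-1}g$ intact.

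I do not expect any genuine computational obstacle here — the result is essentially a one-line cancellation. The only point requiring care is a domain check: for $\iup(g\cdot z)$ even to be defined, the transformed point $g\cdot z$ must lie in the domain $\mathcal{U}$ of $\rho$, so the identity is really an assertion about those $z$ and $g$ for which both $z$ and $g\cdot z$ sit in $\mathcal{U}$. Working locally, as assumed throughout, this holds for $g$ near the identity, and that is all that is needed; the freeness and regularity hypotheses guarantee $\rho$ is well defined and unique on such a neighbourhood, so no further justification is required.
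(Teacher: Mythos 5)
Your proof is correct and follows exactly the paper's own argument: apply right equivariance $\rho(g\cdot z)=\rho(z)g^{-1}$ and cancel $g^{-1}g$ using the composition rule of the left action. The additional remark about checking that $g\cdot z$ lies in the domain of the frame is a sensible refinement that the paper leaves implicit.
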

\begin{proof}First apply the group action to $z$; then, by definition,
\begin{equation}\label{proofIzInv}
\iup(g\cdot z)=\rho(g\cdot z)\cdot(g\cdot z) = \rho(z)\cdot g^{-1}g\cdot z =\rho(z)\cdot z = \iup(z),
\end{equation}
so $\iup(z)$ is an invariant function. \end{proof}

\smallskip
\begin{definition} The normalized invariants are the components of $\iup(z)$.\end{definition}

{{\begin{definition}\label{DefnGenSet} A set of invariants  is said to be a \emph{generating}, or \emph{complete}, set for an algebra of invariants if any invariant in the algebra can be written as a function of elements of the generating set.\end{definition}}}

We now state the Replacement Rule (see \citet{FO2}), from which it follows that the normalized invariants provide a set of generators for the algebra of invariants.

\begin{theorem}[Replacement Rule]\label{reprule}
If $F(z)$ is an invariant of the given action $G\times M\rightarrow M$
for a right moving frame $\rho$ on $M$ then $F(z)=F(\iup(z))$.
\end{theorem}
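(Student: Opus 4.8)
The plan is to derive the identity directly from the defining property of invariance, exploiting the fact that a moving frame assigns to each point $z$ a genuine group element $\rho(z)\in G$. The only ingredient needed is that $F$ is invariant, meaning $F(g\cdot z)=F(z)$ for \emph{every} $g\in G$ and every $z$ in the domain, together with the definition $\iup(z)=\rho(z)\cdot z$ of the normalized invariants.

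The key step is a point-dependent specialization of the group parameter. Since invariance holds for all $g\in G$, and in particular $\rho(z)$ is itself an element of $G$, I would substitute $g=\rho(z)$ into the invariance relation. This yields
\[
F(z)=F\bigl(\rho(z)\cdot z\bigr)=F\bigl(\iup(z)\bigr),
\]
which is precisely the assertion of the Replacement Rule. The substitution is legitimate because $\rho:\mathcal{U}\subset M\to G$ takes values in the group, so $\rho(z)$ is an admissible choice of $g$ at each $z$; the argument is pointwise in $z$ and requires no differentiation or expansion of $F$.

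I expect there to be no genuine obstacle here: the whole content of the result is the observation that evaluating the group action at the frame element $\rho(z)$ leaves an invariant unchanged, and this is immediate from the definition of invariance once one notices that $\rho(z)$ is a valid group element. The only points to keep track of are bookkeeping conditions, namely that $z$ lies in the domain $\mathcal{U}$ of the frame (so that $\rho(z)$ is defined) and, implicitly, that the action has been arranged to be free and regular as assumed throughout. Consequently the normalized invariants generate the algebra of invariants, since any invariant $F$ is recovered as a function of $\iup(z)$ alone.
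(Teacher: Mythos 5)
Your proof is correct and follows exactly the paper's argument: invariance gives $F(z)=F(g\cdot z)$ for all $g$, and specializing to $g=\rho(z)$ yields $F(z)=F(\iup(z))$ by the definition of the normalized invariants. The additional remarks about the domain of the frame are sensible bookkeeping but do not change the substance.
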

\begin{proof}
As $F(z)$ is invariant, $F(z)=F(g\cdot z)$ for all $g\in G$. Setting $g=\rho(z)$ and using the definition of $\iup(z)$ yields the required result.
\end{proof}

\begin{definition}[Invariantization Operator]\label{invOpDef} Given a right moving frame $\rho$, the map $z\mapsto \iup(z)=\rho(z)\cdot z$ is called the \emph{invariantization operator}. This operator extends to functions as
$f(z)\mapsto f(\iup(z))$, and $f(\iup(z))$ is called the \emph{invariantization} of $f$.\end{definition}

\noindent If $z$ has components $z^\alpha$, let $\iup(z^\alpha)$ denote the $\alpha^{\mathrm{th}}$ component of $\iup(z)$. 

\begin{example}(Example \ref{simpleExOne} cont.)\  
The action of the frame on $z=(x_0,u_0,x_1,u_1)\in P_n^{(0,1)}(\mathbb{R}^2)$ is
$$\rho(z) \cdot z  = \left(0,0,\ds\frac{x_1-x_0}{(u_1-u_0)^3}\,,1\right)$$
which is seen to be an invariant of the group action (constants are always invariants). With this same frame, higher forward prolongations of $\mathbb{R}^2$ yield more invariants.
For $z=\left( x_0,u_0,x_1,u_1, \dots, x_J,u_J\right)\in P_n^{(0,J)}(\mathbb{R}^2)$,
$$ \rho(z)\cdot z= \left(0,0, \ds\frac{x_1-x_0}{(u_1-u_0)^3}\,,1, \ds\frac{x_2-x_0}{(u_1-u_0)^3}\,,\ds\frac{u_2-u_0}{u_1-u_0}\,, \dots ,
\ds\frac{x_J-x_0}{(u_1-u_0)^3}\,,\ds\frac{u_J-u_0}{u_1-u_0}
\right).$$
Difference Euler--Lagrange equations typically involve both positive and negative indices $j$, so in general, one must prolong forwards and backwards. For instance, for elements $z=(x_{-1},u_{-1},x_0,u_0,x_1,u_1)\in P_n^{(-1,1)}(\mathbb{R}^2)$,
$$ \rho(z)\cdot z= \left(\ds\frac{x_{-1}-x_0}{(u_1-u_0)^3}\,,\ds\frac{u_{-1}-u_0}{u_1-u_0}\,,0,0, \ds\frac{x_1-x_0}{(u_1-u_0)^3}\,,1
\right),$$
and so on. These results are summarized by $$ \iup(x_j):=\rho\cdot x_j  = \frac{x_j-x_0}{(u_1-u_0)^3}\,,\qquad \iup(u_j):=\rho\cdot u_j =\frac{u_j-u_0}{u_1-u_0}\,,\qquad j\in\mathbb{Z}.$$
We now calculate recurrence relations for these invariants and show that all of them can be written in terms of two fundamental invariants,
 \begin{equation}\label{kappaandeta}\kappa =\iup(u_2)= \rho\cdot u_2,\qquad \eta=\iup(x_1)=\rho\cdot x_1, \end{equation}
  and their shifts.
For each $j\in\mathbb{Z}$,
$$ \es\big\{\iup(u_j)\big\}= \frac{u_{j+1}-u_1}{u_2-u_1}=\frac{\displaystyle\frac{u_{j+1}-u_0}{u_1-u_0}-\displaystyle\frac{u_1-u_0}{u_1-u_0}}{\displaystyle\frac{u_2-u_0}{u_1-u_0}-\displaystyle\frac{u_1-u_0}{u_1-u_0}}=\frac{\iup(u_{j+1})-1}{\kappa-1}\,.$$
We will later prove that each shift of an invariant is an invariant. So the last equality can be obtained more easily by using the Replacement Rule (Theorem \ref{reprule}):
\[
\es\big\{\iup(u_j)\big\}=\frac{u_{j+1}-u_1}{u_2-u_1}=\iup\left(\frac{u_{j+1}-u_1}{u_2-u_1}\right)=\frac{\iup(u_{j+1})-\iup(u_1)}{\iup(u_2)-\iup(u_1)}=\frac{\iup(u_{j+1})-1}{\kappa-1}\,.
\]
The calculation for $\es\{\iup(x_j)\}$ is similar. Together, these identities amount to
 \begin{equation}\label{examplebabyframeA:A} 
\iup(u_{j+1})=(\kappa -1)\,\es\big\{\iup(u_j)\big\} +1,\qquad \iup(x_{j+1})=(\kappa -1)^3\,\es\big\{\iup(x_j)\big\} +\eta.\end{equation}
This shows that the invariants with positive $j$ can be written in terms of $\kappa,\eta$ and their forward shifts. For convenience, let $\kappa_k=\es_k\kappa$ and $\eta_k=\es_k\eta$ for all $k\in\mathbb{Z}$.  
To find the invariants with negative indices $j$, invert \eqref{examplebabyframeA:A}:
$$\iup(u_{j-1})=\frac{\es_{-1}\{\iup(u_j)\}-1}{\kappa_{-1}-1},\qquad \iup(x_{j-1})=\frac{\es_{-1}\{\iup(x_j)\}-\eta_{-1}}{(\kappa_{-1}-1)^3}.$$
For instance,
$$ \iup(u_{-1})=\frac{-\,1}{\kappa_{-1}-1}\,,\qquad \iup(u_{-2})=\frac{-\,\kappa_{-2}}{(\kappa_{-2}-1)(\kappa_{-1}-1)}\,,$$
and so on. Note that all $\iup(u_{j})$, $\iup(x_{j})$ can be written in terms of $\eta$ and $\kappa$ and their shifts.

The invariants $\iup(x_j)$, $\iup(u_j)$ do not behave well under the
shift map, in the sense that $\es\{\iup(x_j)\}\ne \iup(x_{j+1})$ and $\es\{\iup(u_j)\}\ne \iup(u_{j+1})$.
Even though the shift map takes invariants to invariants,  writing shifts of invariantized variables in terms of shifts of $\eta$ and $\kappa$
involves complicated expressions. The discrete moving frame, defined next, will lead to the proper geometric setting that explains the origin of these expressions.
\end{example}

\subsection{Discrete moving frames}
A discrete moving frame is an analogue of a moving frame that is adapted to discrete base points. It amounts to a sequence of frames defined on a product manifold. More detail on discrete moving frames and their applications can be found in \citet{MBW} and \citet{GMBMAN}.

In this subsection, the manifold on which $G$ acts will be the Cartesian product manifold $\man  = M^N$. We assume that the action on $\man $ is free, taking the number of copies $N$ of the manifold ${M}$ to be as high as necessary.  This happens, for example, when the action is (locally) effective on subsets, see \citet{boutin} for a discussion of this and related issues; further, see \citet{Ojoint} for a pathological example where the product action is not free for any $N$.
Questions like the regularity and freeness of the action will refer to the diagonal action on the product; given the action $(g,z_j)\mapsto g\cdot z_j$ for $z_j\in { M}$, the diagonal action of $G$ on $z=(z_1,z_2,\dots, z_N)\in\man$ is
$$ g\cdot (z_1,z_2,\dots, z_N)\mapsto (g\cdot z_1, g\cdot z_2, \dots, g\cdot z_N).$$
\textbf{Important note}: \emph{Throughout this subsection, no assumptions are made about any relationship between the elements} $z_1,\dots,z_N$.

\begin{definition}[Discrete Moving Frames: \citet{MBW,GMBMAN}]
Let $G^N$ denote the Cartesian product of $N$ copies of the group $G$. 
A map
$$
\rho: M^N \rightarrow G^N,\qquad \rho(z)=(\rho_1(z),\dots,\rho_N(z))
$$
is a right discrete moving frame if 
$$
  \rho_k(g\cdot z)=\rho_k(z)g^{-1},\qquad k=1,\dots,N,
$$
and a left discrete moving frame if
$$\rho_k(g\cdot z)=g\rho_k(z),\qquad k=1,\dots,N.$$ 
\end{definition}
Obtaining a frame via the use of normalization equations yields a right frame. As the theory for right and left frames is parallel,   we  restrict ourselves to studying  right frames only. It is advisable when calculating examples, however, to check the parity of actions and frames, see \citet{Mansfield:2010aa} for the subtleties involved.

A discrete moving frame is a sequence of moving frames $(\rho_k)$ with a nontrivial intersection of domains which, locally, are uniquely determined by the cross-section $\mathcal{K}=(\mathcal{K}_1,\dots,\mathcal{K}_N)$ to the group orbit through $z$. The right moving frame component $\rho_k$ is the unique element of the group $G$ that takes $z$ to the cross section $\mathcal{K}_k$.
We also define for a right frame, the invariants
\begin{equation}\label{invariant}
I_{k,j}:= \rho_k(z)\cdot z_j.
\end{equation}
If $M$ is $q$-dimensional, so that $z_j$ has components $z_j^1,\dots,z_j^q$, the $q$ components of $I_{k,j}$ are the invariants
\begin{equation}
I^\alpha_{k,j}:= \rho_k(z)\cdot z^\alpha_j,\qquad \alpha=1,\dots q.
\end{equation}
These are  invariant by the same reasoning as for Equation (\ref{proofIzInv}).
For later use, let $\iup_k$ denote the invariantization operator with respect to the frame $\rho_k(z)$,
so that $$I_{k,j}=\iup_k(z_j),\qquad I_{k,j}^\alpha=\iup_k(z_j^\alpha).$$

\subsection{Difference moving frames}

The discrete moving frame is a powerful construction that can be adapted to any discrete domain. Typically, $\man$ represents the fibres $M$ over a sequence of $N$ discrete points. The geometric context may determine additional structures on $\man$.

From \S\ref{prolspace}, the fact that $n$ is a free variable allows us to replicate the same structures over each base point $m$, using powers of the natural map $\pi$. The shift operator enables these structures to be represented on prolongation spaces over any given $n$. This suggests that the natural moving frame for a given \ode has $\man=P_n^{(J_0,J)}(U)$ for some appropriate $J_0\leq 0$ and $J\geq 0$. Consequently, $N=J-J_0+1$; from here on, we replace the indices $1,\dots, N$ by $J_{0},\dots, J$. 

{{We now use $\mathcal{K}_k$ and $\rho_k$ to denote the cross-sections and frames on $\man$, respectively.}}
The cross-section over $n$, denoted $\mathcal{K}_0$, is replicated for all other base points $n+k$ if and only if the cross-section over $n+k$ is represented on $\man$ by
\begin{equation}\label{prolK}
\mathcal{K}_k=\es_k\mathcal{K}_0
\end{equation}
for all $k$. When this condition holds, $\rho_k=\es_k\rho_0$ (by definition) for all $k$; consequently, $\mathcal{K}_{k+1}=\es\mathcal{K}_k$ and $\rho_{k+1}=\es\rho_k$.

\begin{definition}
	A \emph{difference moving frame} is a discrete moving frame such that $\man$ is a prolongation space $P_n^{(J_0,J)}(U)$ and \eqref{prolK} holds for all $J_0\leq k\leq J$. 
\end{definition}
By definition, the invariants $I_{k,j}$  given by a difference moving frame satisfy
\begin{equation}\label{shiftoninvs} \es(I_{k,j})=I_{k+1,j+1},
\end{equation}
so every invariant $I_{k,j}$ can be expressed as a shift of $I_{0,j-k}$.

\begin{definition}[Discrete Maurer--Cartan invariants] \label{KmatDef} Given a right discrete moving frame $\rho$, the \emph{right discrete  Maurer--Cartan group elements} are
\begin{equation}\label{Kk}
K_k=\rho_{k+1}\rho_k^{-1}
\end{equation}
{{for $J_0\le k\le J-1$.}}
\end{definition}
As the frame is equivariant, each $K_k$ is invariant under the action of $G$. We call the components of the Maurer--Cartan elements the \emph{Maurer--Cartan invariants}. 

As $\rho_k$ is a frame for each $k$, the components of $\rho_k(z)\cdot z$ generate the set of all invariants by the Replacement Rule (Theorem \ref{reprule}). 
{{Moreover, for the two different frames $\rho_{k+1}$ and $\rho_k$, and for any invariant $F(z)$, the Replacement Rule gives
\[ F(z)=F(\rho_k(z)\cdot z)=F(\rho_{k+1}(z)\cdot z)=F(\rho_{k+1}(z)\rho_k^{-1}(z)\cdot \rho_k(z) \cdot z).\]
It can be seen that the action of the Maurer--Cartan element $K_k=\rho_{k+1}\rho_k^{-1}$ provides a mechanism for any invariant, written in terms of the components of the invariant $\rho_k(z)\cdot z$, to be expressed in terms of the
components of the invariant $\rho_{k+1}(z)\cdot z$. Such a mechanism is an 
example of a \textit{syzygy}, which we define next.}}

\begin{definition}[Syzygy]
A syzygy on a set of invariants is a relation between invariants that expresses functional dependency.
\end{definition}

Hence a syzygy on a set of invariants is a function of invariants,
which is identically zero when the invariants are expressed in terms of the underlying variables (in this case, $z\in \man $).

The key idea is to use the Maurer--Cartan group elements, which are well-adapted to studying difference equations, to express \textit{all} invariants in terms of a small generating set. Using \eqref{invariant} and \eqref{Kk} we have
\begin{equation}\label{Invrec1d}
  K_k\cdot I_{k,j} = {{ \rho_{k+1}\rho_{k}^{-1}\cdot \rho_k \cdot z_j = \rho_{k+1} \cdot z_j =}}\ I_{k+1,j}, 
\end{equation}
and iterating this, we have $K_{k+1}K_k\cdot I_{k,j} = I_{k+2,j}$, and so on, which leads to the following result.

\begin{theorem}[\citet{MBW}, Proposition 3.11]\label{MCrecThm}
Given a right discrete moving frame $\rho$, the components of $K_k$, together with the set of all \textit{diagonal invariants}, $I_{j,j}=\rho_j(z)\cdot z_j$, generate all other invariants.
\end{theorem}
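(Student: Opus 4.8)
The plan is to reduce the doubly-indexed family of normalized invariants $\{I_{k,j}\}$ to the diagonal invariants together with the Maurer--Cartan elements, and then appeal to the Replacement Rule. Concretely, I would first note that, by Theorem \ref{reprule}, the full collection $\{I_{k,j}\}$ already generates every invariant of the action (indeed the components of $\rho_k(z)\cdot z$ from any single frame suffice). It therefore remains only to show that each $I_{k,j}$ is a function of the components of the $K_l$ and of the diagonal invariants $I_{j,j}$.

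First I would invoke the recursion (\ref{Invrec1d}), $K_k\cdot I_{k,j}=I_{k+1,j}$, which raises the first index of an invariant by one while fixing the second; inverting it gives $I_{k,j}=K_k^{-1}\cdot I_{k+1,j}$, which lowers the first index. Using the left action on $\man$, these combine so that a product of Maurer--Cartan elements shifts the first index by any amount with $j$ held fixed. This is precisely the iteration already observed immediately after (\ref{Invrec1d}).

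Next I would telescope from the diagonal invariant $I_{j,j}$ out to an arbitrary $I_{k,j}$. For $k>j$ this yields the forward product
\[
I_{k,j}=\big(K_{k-1}K_{k-2}\cdots K_j\big)\cdot I_{j,j},
\]
while for $k<j$ it yields the backward product
\[
I_{k,j}=\big(K_k^{-1}K_{k+1}^{-1}\cdots K_{j-1}^{-1}\big)\cdot I_{j,j},
\]
with the case $k=j$ trivial. Since each $K_l$ is invariant and $G$ acts by explicit smooth maps, these formulas express the components of $I_{k,j}$ as functions of the components of the $K_l$ and of $I_{j,j}$, completing the reduction.

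The hard part will be essentially bookkeeping rather than mathematics: keeping the two directions of the telescoping (forward and backward along the first index) straight, and being clear that ``generate'' is meant \emph{as functions}, so that the action of a product of Maurer--Cartan elements on a diagonal invariant delivers the components of $I_{k,j}$ as explicit functions of the proposed generators. No genuine obstacle arises, because the telescoping is exactly the repeated application of (\ref{Invrec1d}); the content of the theorem is the passage from the two-index family $\{I_{k,j}\}$ to the one-index diagonal family together with the Maurer--Cartan elements, and the two displayed products accomplish this.
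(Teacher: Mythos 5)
Your proposal is correct and follows essentially the same route as the paper, which establishes that the $I_{k,j}$ generate all invariants via the Replacement Rule and then iterates the recursion $K_k\cdot I_{k,j}=I_{k+1,j}$ of (\ref{Invrec1d}) to reach any $I_{k,j}$ from the diagonal; your two telescoping products (forward with $K_{k-1}\cdots K_j$ and backward with $K_k^{-1}\cdots K_{j-1}^{-1}$) are exactly the iteration the paper sketches before the theorem statement, made explicit in both directions.
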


{{We refer to the difference identities, or syzygies, (\ref{Invrec1d}) as \textit{recurrence relations} for the invariants. It is helpful to extend slightly the notion of a generating set from Definition \ref{DefnGenSet}.

		\begin{definition} A set of invariants is a generating set for an algebra of  difference invariants if any difference invariant in the algebra can be written as a function of elements of the generating set and their shifts.
		\end{definition}
}}

For a right difference moving frame, the identities $I_{j,j}=\es_jI_{0,0}$ and $K_k=\es_kK_0$ hold, so Theorem \ref{MCrecThm} reduces to the following result.

\begin{theorem}\label{diffMCrecThm}
	Given a right difference moving frame $\rho$, the set of all invariants is generated by the set of components of $K_0=\rho_1\rho_0^{-1}$ and $I_{0,0}=\rho_0(z)\cdot z_0$.
\end{theorem}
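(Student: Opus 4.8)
The plan is to deduce this directly from Theorem \ref{MCrecThm}, which already furnishes a generating set consisting of the components of all the $K_k$ together with all the diagonal invariants $I_{j,j}=\rho_j(z)\cdot z_j$. The extra prolongation structure carried by a difference frame should then collapse this set down to shifts of $K_0$ and $I_{0,0}$ alone, so no new computation of invariants is required.

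First I would record the two identities that distinguish a difference frame from a general discrete frame. Because the prolongation condition \eqref{prolK} gives $\rho_{k+1}=\es\rho_k$, iterating yields $\rho_{j+k}=\es_k\rho_j$. Applying the shift entrywise in the faithful matrix representation of $G$, so that $\es$ respects group multiplication and inversion, then gives
\[
\es_k K_0 = \es_k\!\left(\rho_1\rho_0^{-1}\right) = \left(\es_k\rho_1\right)\!\left(\es_k\rho_0\right)^{-1} = \rho_{k+1}\rho_k^{-1} = K_k .
\]
Likewise, the recurrence \eqref{shiftoninvs}, namely $\es(I_{k,j})=I_{k+1,j+1}$, iterated $j$ times with $k=j=0$ gives $\es_j I_{0,0}=I_{j,j}$. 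Hence every Maurer--Cartan element $K_k$ is a shift of $K_0$, and every diagonal invariant $I_{j,j}$ is a shift of $I_{0,0}$.

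The theorem then follows by substituting these identities into the generating set of Theorem \ref{MCrecThm} and invoking the extended notion of a generating set for an algebra of difference invariants (the definition immediately preceding the statement), in which shifts of the generators are permitted. Every generator appearing in Theorem \ref{MCrecThm} is a shift of either $K_0$ or $I_{0,0}$; since any invariant is, by that theorem, a function of those generators, it is therefore a function of $K_0$, $I_{0,0}$ and their shifts. This is exactly the assertion that $K_0$ and $I_{0,0}$ generate the algebra of difference invariants.

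I expect the only point requiring care to be the verification that the shift operator acts as a homomorphism on the group-valued frame, so that $\es_k(\rho_1\rho_0^{-1})$ factors as $(\es_k\rho_1)(\es_k\rho_0)^{-1}$. This is where one must appeal to the faithful matrix representation, on which $\es$ acts componentwise and hence commutes with matrix multiplication and inversion; without a concrete representation this step is merely formal. Beyond that, the argument is a routine consequence of Theorem \ref{MCrecThm} and the defining property \eqref{prolK} of a difference frame, so I would keep the written proof short.
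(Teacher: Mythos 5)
Your argument is correct and is essentially the paper's own: the text derives Theorem \ref{diffMCrecThm} in exactly this way, by noting that for a difference frame the identities $K_k=\es_kK_0$ and $I_{j,j}=\es_jI_{0,0}$ follow from the prolongation condition \eqref{prolK} and Equation \eqref{shiftoninvs}, so that the generating set of Theorem \ref{MCrecThm} collapses to $K_0$, $I_{0,0}$ and their shifts under the extended notion of a generating set of difference invariants. Your closing worry is harmless but unnecessary: $\es$ is just the relabelling $n\mapsto n+1$ acting on functions of $n$, so it commutes with group multiplication and inversion whether or not a matrix representation is chosen.
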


Note that as $K_0$ is invariant, the Replacement Rule gives the following useful identity:
\begin{equation}\label{invk0}
K_0=\iup_0(\rho_1),
\end{equation}
where $\iup_0$ denotes invariantization with respect to the frame $\rho_0$.

\begin{example}(Example \ref{simpleExOne} cont.)\  
As the Lagrangian $\upL$ in \eqref{babylagrangian} is second-order, the Euler--Lagrange equations define a subspace of the prolongation space $\man=P_n^{(-2,2)}(\mathbb{R}^2)$, which we use for the remainder of this example. Early in this section, we found a continuous moving frame $\rho$ for $P_n^{(0,1)}(\mathbb{R}^2)$;
this can be used to construct a difference moving frame on $\man$, setting
\begin{equation}
\label{difframe1}
\rho_0
=\left(\begin{array}{ccc}\ds\frac1{(u_1-u_0)^3} &0&-\,\ds\frac{x_0}{(u_1-u_0)^3}\\ 0&\ds\frac1{u_1-u_0} &  -\,\ds\frac{u_0}{u_1-u_0}\\ 0&0&1\end{array}\right)
\end{equation}
and $\rho_k=\es_k\rho_0$.
It is helpful to review the recurrence relations obtained earlier in the light of Equation (\ref{Invrec1d}). By definition,
$$ \left(\begin{array}{c} I_{0,j}^x\\ I_{0,j}^u\\ 1\end{array}\right)=\iup_0\! \left(\begin{array}{c} x_j\\ u_{j}\\ 1\end{array}\right)=\rho_0\! \left(\begin{array}{c} x_j\\ u_{j}\\ 1\end{array}\right)$$
and therefore
\begin{equation}\label{examplebabyframeA:B}
\left(\begin{array}{c} \es I_{0,j}^x\\ \es I_{0,j}^u\\ 1\end{array}\right)=\rho_1\! \left(\begin{array}{c} x_{j+1}\\ u_{j+1}\\ 1\end{array}\right)
=\left(\rho_1 \rho_0^{-1}\right) \rho_0\! \left(\begin{array}{c} x_{j+1}\\ u_{j+1}\\ 1\end{array}\right)
=K_0 \left(\begin{array}{c} I_{0,j+1}^x\\ I_{0,j+1}^u\\1\end{array}\right).
\end{equation}
Calculating the matrix $K_0=\rho_1\rho_0^{-1}=\iup_0(\rho_1)$ yields
\begin{equation}\label{examplebabyframeA:C} K_0 = \left(\begin{array}{ccc} \ds\frac1{(\kappa-1)^3}& 0&-\,\ds\frac{\eta}{(\kappa-1)^3}  \\ 0& \ds\frac1{\kappa-1} &-\,\ds\frac1{\kappa-1} \\  0&0&1\end{array}\right),\end{equation}
where $\eta$ and $\kappa$ are defined in \eqref{kappaandeta}.
Clearly, equations (\ref{examplebabyframeA:A}) and (\ref{examplebabyframeA:B}) are consistent.

The Maurer--Cartan invariants for this example are the components of $K_0$ and their shifts.
By Theorem \ref{diffMCrecThm}, the algebra of invariants is generated by $\eta$, $\kappa$ and their shifts, because both components of $I_{0,0}=\rho_0\cdot (x_0,u_0)$ are zero.
\end{example}

A complete discussion of Maurer--Cartan invariants for discrete moving frames, with their recurrence relations and discrete syzygies, is given in \citet{GMBMAN}.
\subsection{Differential--difference invariants and the differential--difference syzygy}

We aim to obtain the Euler--Lagrange equations in terms of the invariants, and also the form of the conservation laws in terms of invariants and a frame.
A key ingredient of our method will be 
a {\em differential--difference syzygy} between differential and difference invariants, 
which will feature prominently in our formulae.

Given any smooth path $t\mapsto z(t)$ in the space $\man =M^N$, consider the induced group action on the path and its tangent. We extend the group action to the dummy variable $t$ trivially, so that $t$ is invariant.
The action is extended to the first-order jet space of $\man$ as follows:
$$g\cdot\frac{{\rm d} z(t)}{{\rm d}t} = \frac{{\rm d} \left(g\cdot z(t)\right)}{{\rm d}t}\,.$$
If the action is free and regular on $\man $, it will remain so on the jet space and we may use the same frame to find the first-order differential invariants
\begin{equation}\label{FOdiffinvariant}
I_{k,j;\,t}(t):=\rho_k(z(t))\cdot \frac{{\rm d}  z_j(t)}{{\rm d}t}\,.
\end{equation}
Let $I_{k,j}(t)$ denote the restriction of $I_{k,j}$ to the path $z(t)$. The frame depends on $z(t)$, so, in general,
$$I_{k,j;\,t}(t)\neq \frac{{\rm d}}{{\rm d}t} I_{k,j}(t).$$

The particular {{differential--difference}} syzygy that we will need to calculate the invariantized variation of the Euler--Lagrange equations
concerns the relationship between the $t$-derivative of the discrete invariants $I_{k,j}(t)$
and the differential--difference invariants $I_{k,j;\,t}(t)$. It takes the form
\begin{equation}\label{mainDDSyz}
\frac{{\rm d}}{{\rm d}t} \bkappa = \mathcal{H}\bom,
\end{equation}
where $\bkappa$ is a vector of generating  invariants, $\mathcal{H}$ is a linear difference operator with coefficients 
that are functions of $\kappa$ and its shifts, and $\bom$ is a vector of generating first order differential invariants
of the form \eqref{FOdiffinvariant}.

If the generating discrete invariants are known, the syzygies can be found by direct differentiation followed by the Replacement Rule (Theorem \ref{reprule}). Recurrence relations for the differential invariants are obtained 
in a manner analogous to those of the discrete invariants, as illustrated in the running example below. This allows one to write the syzygy in terms of a set of generating differential invariants.
Another method is to differentiate the Maurer--Cartan matrix as follows.
Given a matrix representation for the right frame $\rho_k$, restricted throughout the following to the path $z(t)$, apply the product rule to the definition of $K_k$ to obtain
\begin{equation}\label{DiffKeqn}
\frac{{\rm d}}{{\rm d}t} K_k=\frac{{\rm d}}{{\rm d}t}\,\left( \rho_{k+1}\rho_{k}^{-1}\right) = \left(\frac{{\rm d}}{{\rm d}t} \rho_{k+1}\right) \rho_{k+1}^{-1}  K_k -  K_k \left(\frac{{\rm d}}{{\rm d}t} \rho_{k} \right)\rho_{k}^{-1}.
\end{equation} 
This motivates the following definition.
\begin{definition}[Curvature Matrix]\label{Nndefn} The curvature matrix $N_k$ is given by
 \begin{equation}\label{curvMatDef}
N_k=\left(\frac{{\rm d}}{{\rm d}t}\, \rho_k\right) \rho_k^{-1}
\end{equation}
when $\rho_k$ is in matrix form.
\end{definition}
It can be seen that for a right frame,  $N_k$ is an invariant matrix that involves the first order differential invariants.
The above derivation applies to all discrete moving frames. For a difference frame, moreover, $N_k=\es_kN_0$ and (\ref{DiffKeqn}) simplifies to the set of shifts of a generating syzygy,
\begin{equation}\label{DiffKeqnN}\frac{{\rm d}}{{\rm d}t} K_0=(\es N_0) K_0-K_0N_0.\end{equation}
As $N_0$ is invariant, the Replacement Rule yields another useful identity:
\begin{equation}\label{invn0}
N_0=\iup_0\!\left(\frac{{\rm d}}{{\rm d}t}\, \rho_0\right).
\end{equation}

Equating components in (\ref{DiffKeqnN})  yields syzygies relating the derivatives of the Maurer--Cartan invariants to the first order differential invariants and their shifts.

Finally, we need the differential--difference syzygies for the remaining generating invariants, the diagonal invariants
(see Theorem \ref{diffMCrecThm}). For a linear (matrix) action,
\begin{equation}\label{diffdiffdiagsyz}
\frac{{\rm d}}{{\rm d}t} I_{0,0}(t)
= \left(\frac{{\rm d}}{{\rm d}t} \rho_0\right) \rho_0^{-1}\cdot  \left(\rho_0 \cdot z_0(t)\right) +\rho_0\cdot \frac{{\rm d}}{{\rm d}t} z_0(t)
=N_0 I_{0,0}(t) + I_{0,0;\,t}(t).
\end{equation}
For nonlinear actions, the techniques described in the text \citet{Mansfield:2010aa} may be modified to accommodate difference moving frames.

In all examples in this paper, the diagonal invariants $I^\alpha_{0,0}$ are normalized to be constants; nevertheless, there are examples where this need not hold. In some circumstances, it is necessary to chose a normalization that makes off-diagonal invariants constants, in which case some diagonal invariants may depend on $z(t)$.

\begin{example}(Example \ref{simpleExOne} cont.)\  
We now turn our attention to the differential invariants for our running example. Writing $x_j=x_j(t)$ and $u_j=u_j(t)$, etc., the action on the derivatives $x_j'={\rm d} x_j/{\rm d}t$, $u_j'={\rm d} u_j/{\rm d}t$ is induced by the chain rule, as follows:
$$g\cdot x_j' = \frac{{\rm d} \, \left(g\cdot x_j \right)}{{\rm d}\, \left(g\cdot t\right)}= \frac{{\rm d}\,\left(g\cdot x_j \right)}{{\rm d} t} = \lambda^3  x_j', $$
and similarly,  $$g\cdot u_j' = \lambda u_j'.$$
Define  
 \begin{equation}\label{difinvariantsexample}
I^x_{0,j;\,t}=\rho_0\cdot x_j'=\frac{x_j'}{\left(u_1-u_0\right)^3},\qquad I^u_{0,j;\,t}=\rho_0\cdot u_j'=\frac{u_j'}{u_1-u_0}.
\end{equation}
We first obtain  recurrence relations for the $I^x_{k,j;\,t}$ and $I^u_{k,j;\,t}$. As 
$$\es I^x_{0,j;\,t}=\es(\rho_0\cdot x_j')\ {{= \rho_1\cdot x_{j+1}'}}=\left(\rho_1\rho_0^{-1}\right)\rho_0\cdot x_{j+1}'=K_0\cdot I^x_{0,j+1;\,t},$$ and similarly for  $I^u_{0,j;\,t}$, it follows that
\begin{equation}\label{examplebabyframeA:D}
\es I^x_{0,j;\,t} = \frac{I^x_{0,j+1;\,t}}{\left(\kappa-1\right)^3},\qquad \es I^u_{0,j;\,t} = \frac{I^u_{0,j+1;\,t}}{\kappa-1}.
\end{equation}
In the same way, one can use the shift operator and $\rho_k\rho_0^{-1}=K_{k-1}K_{k-2}\cdots K_0$ to obtain all $I^x_{k,j;\,t}$, $I^u_{k,j;\,t}$ in terms of the generating Maurer--Cartan invariants, 
$$\sigma^x:=I^x_{0,0;\,t}=\iup_0(x_0')\ {{=\rho_0\cdot x_0'=\frac{x_0'}{\left(u_1-u_0\right)^3}}},$$
$$\sigma^u:=I^u_{0,0;\,t}=\iup_0(u_0')\ {{=\rho_0\cdot u_0'=\frac{u_0'}{u_1-u_0}}},$$ and their shifts.
We now obtain the differential--difference syzygies (\ref{DiffKeqnN}).
The simplest way to calculate $N_0$ is to use the identity \eqref{invn0}:
\begin{equation}\label{examplebabyframeA:E}
N_0=\iup_0\left(\displaystyle\frac{{\rm d}}{{\rm d}t} \rho_0\right) =
\left(\begin{array}{ccc} -3(\iup_0(u_{1}')-\iup_0(u_{ 0}'))&0&-\iup_0(x_{0}')\\ 0&-(\iup_0(u_{1}')-\iup_0(u_{ 0}')) & - \iup_0(u_{ 0}')\\ 0&0&0 \end{array}
\right).
\end{equation}
Other methods are detailed in \citet{Mansfield:2010aa}.
Now use the recurrence relations (\ref{examplebabyframeA:D}) to obtain the differential invariants in (\ref{examplebabyframeA:E})
in terms of $\sigma^x$, $\sigma^u$ and their shifts:
\begin{equation}\label{N0fin}
 N_0=\left(\begin{array}{ccc}-3\left((\kappa-1)\es\:\! \sigma^u-\sigma^u\right) & 0 & -\sigma^x\\
            0& -\left((\kappa-1)\es\:\! \sigma^u-\sigma^u\right) & -\sigma^u\\
            0&0&0
           \end{array}\right).
\end{equation}
Inserting (\ref{examplebabyframeA:C}) and (\ref{N0fin}) into
(\ref{DiffKeqnN}) yields, after equating components and simplifying,
\begin{equation}\label{examplebabyframeA:G}
\begin{array}{rcl}
\displaystyle\frac{{\rm d} \eta}{{\rm d}t} &=& \left[(\kappa-1)^3\, \es -\id\;\!\right]\sigma^x + 3\eta\left[\;\!\id-(\kappa-1)\;\!\es\,\right]\sigma^u,\\[12pt]
\displaystyle\frac{{\rm d} \kappa}{{\rm d}t} &=& \left( \kappa-1\right)\left[\;\!\id-\kappa\;\! \es +( \kappa_1 -1)\,\es_2\;\!\right]\sigma^u.
\end{array}
\end{equation}
Therefore, the differential--difference syzygy between the generating difference invariants, $\eta$ and $\kappa$, and the generating differential invariants, $\sigma^x$ and $\sigma^u$, 
can be put into the canonical form
$$ \displaystyle\frac{{\rm d}}{{\rm d}t} \left(\begin{array}{c} \eta \\ \kappa\end{array}\right) = \mathcal{H}\left(\begin{array}{c}\sigma^x \\ \sigma^u\end{array}\right), $$
where $\mathcal{H}$ is a linear difference operator whose coefficients depend only on the generating difference invariants and their shifts.
\end{example}

\section{The  Euler--Lagrange equations for a Lie group invariant Lagrangian}\label{inveul}
We are now ready to present our first main result, the calculation of the Euler--Lagrange equations, in terms of invariants, for a Lie group invariant difference Lagrangian.
We emulate the calculation of the Euler--Lagrange equations given in \S\ref{introdiffcalc}, but use the computational techniques developed above for difference invariants.

First, recall the summation by parts formula (\ref{sumpartdef}). This leads to the following similar definition.
\begin{definition}\label{adopdef} Given a linear difference operator $\mathcal{H}=c_j \es_j$,  the adjoint operator $\mathcal{H}^*$ is defined by
$$\mathcal{H}^*(F)=\es_{-j}(c_j F)$$
and the associated boundary term $A_{\mathcal{H}}$ is defined by $$  F\mathcal{H}(G)-\mathcal{H}^*(F)G = (\es-\id)(A_{\mathcal{H}}(F,G)),$$
for all appropriate expressions $F$ and $G$.
\end{definition}

Now suppose we are given a group action $G\times {M}\rightarrow {M}$ and that we have found a difference frame for this action. 
Any group-invariant Lagrangian $\upL(n,\mathbf{u}_0,\dots,\mathbf{u}_J)$ can be written, in terms of the generating invariants $\bkappa$ and their shifts $\bkappa_j=\es_j\bkappa$, as $L(n,\bkappa_0,\dots, \bkappa_{J_1})$ for some $J_1$; we adopt this notation from here on. For consistency, we drop the argument from the associated functional, setting
\[
\mathcal{L}=\sum \upL(n,\mbu_0,\dots, \mbu_{J})=\sum L(n,\bkappa_0,\dots, \bkappa_{J_1}).
\]

\begin{theorem}[Invariant Euler--Lagrange Equations]\label{mainThmELeqns}
Let $\mathcal{L}$ be a Lagrangian functional whose invariant Lagrangian is given in terms of the generating invariants as
$$\mathcal{L}=\sum L(n,\bkappa_0,\dots, \bkappa_{J_1}),$$
and suppose that the differential--difference syzygies are
$$ \frac{{\rm d} \bkappa}{{\rm d}t} = \mathcal{H}\bom.$$
Then (with $\cdot$ denoting the sum over all components)
\begin{equation}\label{ELinvresult}
\eubu(\upL)\cdot \mbu_0'=\big(\mathcal{H}^*\eubka(L)\big)\cdot\bom,
\end{equation}
where $\eubka(L)$ is the difference Euler operator with respect to $\bkappa$.
Consequently, the invariantization of the original Euler--Lagrange equations is
\begin{equation}\label{invEL}
\iup_0\big(\eubu(\upL)\big)=\mathcal{H}^*\eubka(L).
\end{equation}
\end{theorem}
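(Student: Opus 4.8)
The plan is to run the variation computation of \S\ref{introdiffcalc} one more time, but now restricting the variation field $\mbw$ to be tangent to the group orbit directions and re-expressing everything through the frame. Concretely, I would start from the first variation identity \eqref{firstVar}, summed over $n$,
\[
\frac{{\rm d}}{{\rm d}\epsilon}\Big\vert_{\epsilon=0}\mathcal{L}[\mbu+\epsilon\mbw]=\sum w^\alpha_0\,\eual(\upL),
\]
where the telescoping boundary term $(\es-\id)A_\mbu$ drops out under the sum. The key move is to choose the variation to be the infinitesimal flow along the path $t\mapsto z(t)$ used to define the differential invariants, i.e. take $\mbw=\mbu_0'={\rm d}\mbu_0/{\rm d}t$, so that the left-hand side becomes $\sum \eubu(\upL)\cdot\mbu_0'$, which is exactly the left side of \eqref{ELinvresult}.

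Next I would compute the \emph{same} variation after passing to invariants. Since $\upL$ is $G$-invariant it equals $L(n,\bkappa_0,\dots,\bkappa_{J_1})$, and differentiating $\mathcal{L}=\sum L$ along the path gives, by the discrete chain rule and summation by parts in the $\bkappa$-variables,
\[
\frac{{\rm d}}{{\rm d}t}\sum L=\sum \eubka(L)\cdot\frac{{\rm d}\bkappa}{{\rm d}t},
\]
again with the total-difference boundary terms telescoping away. Now substitute the differential--difference syzygy \eqref{mainDDSyz}, ${\rm d}\bkappa/{\rm d}t=\mathcal{H}\bom$, and move $\mathcal H$ across the inner product using the adjoint from Definition \ref{adopdef}: under the sum over $n$, $\eubka(L)\cdot\mathcal H\bom$ and $(\mathcal H^*\eubka(L))\cdot\bom$ differ only by a telescoping $(\es-\id)A_{\mathcal H}$ term. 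This yields $\sum(\mathcal H^*\eubka(L))\cdot\bom$, the right side of \eqref{ELinvresult}. Equating the two computations of the same $t$-derivative of $\mathcal L$ gives \eqref{ELinvresult} as an identity of summands once one argues the integrands agree, not merely their sums.

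Passing from the summed identity to the pointwise identity \eqref{ELinvresult}, and then to \eqref{invEL}, is where I expect the real work to lie. For \eqref{ELinvresult} one argues that $\bom$ (equivalently $\mbu_0'$) is an arbitrary smooth field, so the equality of sums for all such variations forces equality of the integrands; this is the standard fundamental-lemma step, valid because the boundary contributions were isolated as exact differences. For \eqref{invEL} the point is that $\mbu_0'=\rho_0^{-1}\cdot\bom$ componentwise (the frame linearly relates the raw tangent $\mbu_0'$ to the generating differential invariants $\bom=I_{0,0;\,t}$), so applying $\iup_0$ to $\eubu(\upL)$ and using invariance of the right-hand side lets one cancel the arbitrary invariant vector $\bom$ from both sides of \eqref{ELinvresult}, leaving the bare operator identity $\iup_0(\eubu(\upL))=\mathcal H^*\eubka(L)$. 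The main obstacle is bookkeeping: one must verify that every summation-by-parts step (both in the $\mbu_j$ variables producing $A_\mbu$ and in the $\bkappa_j$ variables producing the adjoint boundary term $A_{\mathcal H}$) genuinely telescopes, so that no spurious bulk terms survive and the cancellation of $\bom$ is legitimate. I would handle this by tracking the boundary terms explicitly through Definition \ref{adopdef} rather than discarding them prematurely, and only invoking the telescoping at the very end.
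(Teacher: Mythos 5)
Your proposal follows essentially the same route as the paper's proof: compute $\frac{{\rm d}}{{\rm d}t}\mathcal{L}$ once in the original variables and once via the chain rule in $\bkappa$, substitute the syzygy, pass $\mathcal{H}$ to its adjoint while tracking the exact-difference terms $A_{\bkappa}$ and $A_{\mathcal{H}}$, invoke the Fundamental Lemma to equate the bulk summands, and finally apply $\iup_0$ and compare components of $\bom$ (using that $\iup_0(\mbu_0')=\bom$) to obtain \eqref{invEL}. No gaps; your extra care about the telescoping of the boundary terms and the invertibility of the map $\mbu_0'\mapsto\bom$ only makes explicit what the paper leaves implicit.
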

\begin{proof} In order to effect the variation, set $\mbu=\mbu(t)$
and compare
\begin{equation}
\frac{{\rm d}}{{\rm d}t}\, \mathcal{L}  = \sum \big\{\eubu(\upL)\cdot \mbu_0'+(\es-\id)(A_\mbu)\big\}
\end{equation}
with the same calculation in terms of the invariants. This gives $\upd \mathcal{L}/\upd t=\sum \upd L/\upd t$, where
\begin{equation}\label{ELcalc1} \begin{array}{rcl}
\displaystyle\frac{{\rm d} L}{{\rm d}t}  &=& \ds\frac{\partial L}{\partial \kappa^\alpha_j}\, \ds\frac{{\rm d} \kappa^\alpha_j}{{\rm d}t}\\[12pt]
&=& \ds\frac{\partial L}{\partial \kappa^\alpha_j}\, \es_j \ds\frac{{\rm d}  \kappa^\alpha}{{\rm d}t}\\[10pt]
&=& \left( \es_{-j} \ds\frac{\partial L}{\partial \kappa^\alpha_j}\right) \ds\frac{{\rm d}  \kappa^\alpha}{{\rm d}t} + (\es -\id)(A_{\bkappa} )\\[10pt]
&=& \eubka(L)\cdot \ds\frac{{\rm d}  \bkappa}{{\rm d}t} +(\es -\id)(A_{\bkappa} )\\[10pt]
&=& \eubka(L)\cdot \mathcal{H}\bom +(\es -\id)(A_{\bkappa} )\\[10pt]
&=& \big(\mathcal{H}^*\eubka(L)\big)\cdot\bom + (\es -\id)\{A_{\bkappa} +A_{\mathcal{H}}\}.
\end{array}
\end{equation}
The divergence terms arising from the first and second summations by parts are $(\es -\id)A_{\bkappa} $ and $(\es -\id)A_{\mathcal{H}}$ respectively. (Note that $A_{\bkappa} $ is linear in the $\upd\kappa^\alpha/\upd t$ and their shifts,
while $A_{\mathcal{H}}$ is linear in the $\sigma^{\alpha}$ and their shifts.)
By the Fundamental Lemma of the Calculus of Variations, the identity (\ref{ELinvresult}) holds. To derive \eqref{invEL}, apply $\iup_0$ to \eqref{ELinvresult} and compare components of $\bom$.
\end{proof}

Consequently, the original Euler--Lagrange equations, in invariant form, are equivalent to
\[
\mathcal{H}^*\eubka(L)=0.
\]

\begin{example}(Example \ref{simpleExOne} cont.)\  
The invariant Lagrangian in our example is of the form
$$\mathcal{L}=\sum L(\eta, \kappa, \es\kappa).$$
Using $\eta_j=\es_j\eta$ and $\kappa_j=\es_j\kappa$ henceforth, we write Equation (\ref{examplebabyframeA:G}) as
\begin{equation}\label{exposEGtderivsinvs}\begin{array}{rcl}
\displaystyle\frac{{\rm d} \eta}{{\rm d}t} &=& \mathcal{H}_{11}\;\!\sigma^x + \mathcal{H}_{12}\;\!\sigma^u,\\[10pt]
\displaystyle\frac{{\rm d} \kappa}{{\rm d}t} &=& \mathcal{H}_{22}\;\! \sigma^u,\end{array}\end{equation}
with 
$$\begin{array}{rcl}
\mathcal{H}_{11}&=&(\kappa-1)^3\, \es -\id,\\
\mathcal{H}_{12}&=&3\eta\{\id-(\kappa-1)\,\es\},\\
\mathcal{H}_{22}&=& \left( \kappa-1\right)\{\id-\kappa \es +(\kappa_1 -1)\,\es_2\}.\end{array}$$
By Theorem \ref{mainThmELeqns}, the invariantized Euler--Lagrange equations are
$$ \mathcal{H}_{11}^*\euet(L)=0,\qquad \mathcal{H}_{12}^*  \euet(L)  + \mathcal{H}_{22}^* \euka(L)=0,$$
where
\[
\begin{aligned}
&  \mathcal{H}_{11}^*=(\kappa_{-1}-1)^{3}\,\es_{-1}-\id,\\
&\mathcal{H}_{12}^*=3\eta\,\id-3\eta_{-1}(\kappa_{-1}-1)\,\es_{-1}, \\
& \mathcal{H}_{22} ^*=(\kappa-1)\id-\kappa_{-1}(\kappa_{-1}-1) \es_{-1} + (\kappa_{-2}-1)(\kappa_{-1}-1)\es_{-2}.
\end{aligned}
\]
The particular Lagrangian \eqref{babylagrangian} amounts to $L=\eta(\kappa-1)^{-3/2}$, so
\[
\euet=(\kappa-1)^{-3/2},\qquad\euka=-\tfrac{3}{2}\,\eta(\kappa-1)^{-5/2}.
\]
Consequently, the invariantized Euler--Lagrange equations are
\begin{equation}\label{invelex1}
(\kappa_{-1}-1)^{3/2}-(\kappa-1)^{-3/2}=0,
\end{equation}
\begin{equation}\label{invelex2}
\tfrac{3}{2}\big\{\eta(\kappa-1)^{-3/2}-\eta_{-1}(\kappa_{-1}-1)^{-1/2}+\eta_{-1}(\kappa_{-1}-1)^{-3/2}-\eta_{-2}(\kappa_{-1}-1)(\kappa_{-2}-1)^{-3/2}\big\}=0.
\end{equation}
Assuming that $L$ is real-valued ($\kappa>1$), the general solution of \eqref{invelex1} is
\begin{equation}\label{kapsol}
\kappa=1+k_1^{2(-1)^n}=1+\tfrac{1}{4}\left[k_1+k_1^{-1}+(k_1-k_1^{-1})(-1)^n\right]^2,
\end{equation}
where $k_1$ is an arbitrary nonzero constant. Therefore \eqref{invelex2} simplifies to
\[
k_1^{3(-1)^{n+1}}\eta+\left(k_1^{3(-1)^n}-k_1^{(-1)^n}\right)\eta_{-1}-k_1^{5(-1)^{n+1}}\eta_{-2}=0,
\]
whose general solution is
\begin{align}\label{etasol}
\eta&=k_1^{3(-1)^n}\left\{k_2\left((n+1)k_1^{(-1)^{n+1}}-nk_1^{(-1)^n}\right)+k_3(-1)^n\right\},
\end{align}
where $k_2$ and $k_3$ are arbitrary constants.
\end{example}

\section{On infinitesimals and the adjoint action}\label{moregrpsec}
To state our results concerning the conservation laws, it is necessary to use the infinitesimal generators of a Lie group action on a manifold, together with the adjoint representation of the Lie group.

\begin{definition}\label{usualinfdefn}
Let $G\times { U} \rightarrow { U}$   be a smooth local Lie group action. If $\gamma(t)$ is a path in $G$ with $\gamma(0)=e$, the identity element in $G$, then
\begin{equation}\label{usualinfdefnEqn}
\mbv=\frac{{\rm d}}{{\rm d}t}\,\Big\vert_{t=0} \gamma(t)\cdot \mbu
\end{equation}
is called the infinitesimal generator of the group action at $\mbu\in U$, in the direction $\gamma\!\phantom{.}'(0)\in T_eG$,
where $T_eG$ is the tangent space to $G$ at $e$. In coordinates, the components of the infinitesimal generator are $\phi^\alpha=\mbv(u^\alpha)$, so
\[
\mbv=\phi^\alpha\frac{\partial}{\partial u^\alpha}\,.
\]
\end{definition}

The infinitesimal generator is extended to the prolongation space $\man =P^{(J_0,J)}_n(U)$ by the prolongation formula
\[
\mbv(u^\alpha_j)=\left.\frac{\textrm{d}}{\textrm{d}t}\right\vert_{t=0} \gamma(t)\cdot u^\alpha_{j}=\phi^\alpha_j=\es_j \phi^\alpha_0,\qquad J_0\leq j\leq J,
\]
\citep[see][]{Hydon}. In coordinates, the prolonged infinitesimal generator is
\[
\mbv=\phi^\alpha_{j}\frac{\partial}{\partial u^\alpha_{j}}\,.
\]

\begin{lemma} If a Lagrangian $\upL[\mbu]$ is invariant under the group action  $G\times \man \rightarrow \man $, the components of the infinitesimal generator of the group action
given by Definition \ref{usualinfdefn}
form the characteristic of a variational symmetry of $\upL[\mbu]$, as defined in Definition \ref{infpart1}.
\end{lemma}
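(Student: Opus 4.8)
The plan is to derive the characteristic condition of Definition~\ref{infpart1} by differentiating the invariance of $\upL$ along a one-parameter path in $G$. First I would record what invariance means once the action has been prolonged to $\man=P^{(J_0,J)}_n(U)$: it is the statement that $\upL(n,g\cdot\mbu_0,\dots,g\cdot\mbu_J)=\upL(n,\mbu_0,\dots,\mbu_J)$ for every $g\in G$ in the domain of the action, where the \emph{same} group element $g$ is applied to each fibre coordinate $\mbu_j$.

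Next I would choose a path $\gamma(t)\subset G$ with $\gamma(0)=e$ and prescribed $\gamma'(0)\in T_eG$, substitute $g=\gamma(t)$ into the invariance identity, and differentiate at $t=0$. The right-hand side is constant in $t$, so its derivative vanishes; the chain rule applied to the left-hand side produces $\sum_j (\partial\upL/\partial u^\alpha_j)\,\frac{\upd}{\upd t}\big\vert_{t=0}(\gamma(t)\cdot u^\alpha_j)$, with the Einstein convention also summing over $\alpha$. By Definition~\ref{usualinfdefn} and the prolongation formula, each factor $\frac{\upd}{\upd t}\big\vert_{t=0}(\gamma(t)\cdot u^\alpha_j)$ is exactly the prolonged infinitesimal-generator component $\phi^\alpha_j=\es_j\phi^\alpha_0$ in the direction $\gamma'(0)$. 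Hence the derivative of the invariance identity reads $\phi^\alpha_j\,\partial\upL/\partial u^\alpha_j=0$.

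Finally I would compare this identity with Equation~(\ref{invLagOne}). Since $0=(\es-\id)(0)$, the relation $\phi^\alpha_j\,\partial\upL/\partial u^\alpha_j=0$ is precisely the characteristic condition with $B=0$; thus $\bphi=(\phi^1,\dots,\phi^q)^T$ is the characteristic of a variational symmetry, indeed one under which $\upL$ is genuinely invariant rather than merely invariant up to a total difference. Non-triviality of $\bphi$ is secured by choosing $\gamma'(0)\neq 0$ along a direction in which the action is locally effective.

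The step requiring the most care is the identification in the second paragraph: one must invoke the prolongation formula of Definition~\ref{usualinfdefn} to recognise that differentiating the diagonal action coordinate-by-coordinate assembles exactly the shifted components $\phi^\alpha_j=\es_j\phi^\alpha_0$, and one must keep the double summation over both $\alpha$ and $j$ consistent with the convention of Definition~\ref{infpart1}. Beyond that the argument is a routine application of the chain rule; notably, the Fundamental Lemma of the Calculus of Variations is not needed here, because the boundary term $B$ can be taken to be zero outright.
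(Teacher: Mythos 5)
Your proposal is correct and follows essentially the same route as the paper's proof: differentiate the invariance identity $\upL(g\cdot\mbu_0,\dots,g\cdot\mbu_J)=\upL(\mbu_0,\dots,\mbu_J)$ along a path $\gamma(t)$ at $t=0$ to obtain $\phi^\alpha_j\,\partial\upL/\partial u^\alpha_j=0$, which is the characteristic condition of Definition~\ref{infpart1} with $B=0$. Your added remarks on the prolongation formula and on securing $\bphi\neq\mathbf{0}$ via local effectiveness are harmless elaborations of what the paper leaves implicit.
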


\begin{proof}  The Lagrangian $\upL$ is invariant, so $$\upL(\mbu_0, \mbu_1, \ldots, \mbu_J)=\upL(g\cdot \mbu_0, g\cdot \mbu_1, \ldots, g\cdot \mbu_J)$$ for all $g$. 
Thus
$$
0=\left.\frac{\textrm{d}}{\textrm{d}t}\right\vert_{t=0} \upL\left(\gamma(t)\cdot \mbu_0, \gamma(t)\cdot \mbu_1, \dots \right)=\mbv({\upL}) = \phi^\alpha_{j}\frac{\partial \upL}{\partial u^\alpha_{j}}.
$$
By Definition \ref{infpart1}, the components $\phi^\alpha$ of the infinitesimal generator are the components of the characteristic of a variational symmetry of $\upL$.
\end{proof}

Each infinitesimal generator is determined by $\gamma\!\phantom{.}'(0)\in T_eG$; the remainder of the path in $G$ is immaterial. However, $T_eG$ is isomorphic to the Lie algebra $\mathfrak{g}$, which is the set of right-invariant vector fields on $G$. Right-invariance yields a Lie algebra homomorphism from $\mathfrak{g}$ to the set $\mathcal{X}$ of infinitesimal generators of symmetries (see \citet{Obook} for details). If the group action is faithful, this is an isomorphism.

The $R$-dimensional Lie group $G$ can be parametrized by $\mba=(a^1, \dots , a^R)$ in a neighbourhood of the identity, $e$, so that the general group element is $\Gamma(\mba)$, where $\Gamma(\mathbf{0})=e$. Given local coordinates $\mbu=(u^1,\dots,u^q)$ on $U$, let $\widehat{\mbu}=\Gamma(\mba)\cdot\mbu$. By varying each independent parameter $a^r$ in turn, the process above yields $R$ infinitesimal generators,
\begin{equation}\label{infgen}
\mbv_r=\xi^\alpha_r(\mbu)\partial_{u^\alpha},\quad\text{where}\quad\xi^\alpha_r=\frac{\partial \hat{u}^\alpha}{\partial a^r}\Big|_{\mba=\mathbf{0}}.
\end{equation}
These form a basis for $\mathcal{X}$.

As $\mathcal{X}$ is homomorphic to $\mathfrak{g}$, the adjoint representation of $G$ on $\mathfrak{g}$ gives rise to the adjoint representation of $G$ on $\mathcal{X}$. Given $g\in G$, the adjoint representation $Ad_g$ is the tangent map on $\mathfrak{g}$ induced by the conjugation $h\mapsto gh g^{-1}$. The corresponding adjoint representation on $\mathcal{X}$  is expressed by a matrix, $\myAd(g)$, which is most conveniently obtained as follows\footnote{{See the Appendix for an alternative construction using Lie algebra structure constants.}}. Having calculated a basis for $\mathcal{X}$,
\[
\mbv_r=\xi^\alpha_r(\mbu)\,\partial_{u^\alpha},\qquad r=1,\dots,R,
\]
let $\widetilde\mbu=g\cdot\mbu$ and define
\[
\widetilde\mbv_r=\xi^\alpha_r(\widetilde\mbu)\,\partial_{\tilde u^\alpha},\qquad r=1,\dots,R.
\]
Now express each $\mbv_r$ in terms of $\widetilde\mbv_1,\dots,\widetilde\mbv_R$ and determine $\myAd(g)$ from the identity
\begin{equation}\label{addef}
(\mbv_1\ \cdots\ \mbv_R)=(\widetilde\mbv_1\ \cdots\ \widetilde\mbv_R)\!\:\myAd(g).
\end{equation}

\begin{example}(Example \ref{simpleExOne} cont.)\  
	For our running example, the group parameters are $\lambda,$ $a$ and $b$ with identity $(\lambda,a,b)=(1,0,0)$. Choosing $a^1=\ln(\lambda), a^2=a$ and $a^3=b$, so that the identity corresponds to $\mba=\mathbf{0}$, one obtains the following basis for $\mathcal{X}$:
\[
\mbv_1=3x\partial_{x} + u\partial_{u},\qquad\mbv_2=\partial_x,\qquad\mbv_3=\partial_u.
\]	
Recall that the action of a fixed group element $g$, parametrized by $(\lambda,a,b)$, gives 
\[
(\tilde x,\tilde u)=(\lambda^3 x +a , \lambda u +b).
\]
Therefore, by the standard change-of-variables formula,
\[
\mbv_1=3(\tilde x-a)\partial_{\tilde x}+(\tilde u-b)\partial_{\tilde u},\qquad \mbv_2=\lambda^3\partial_{\tilde x},\qquad \mbv_3=\lambda\partial_{\tilde u}.
\]
Consequently,
\[
(\mbv_1\ \mbv_2\ \mbv_3)=(\widetilde\mbv_1\ \widetilde\mbv_2\ \widetilde\mbv_3)\!\:\myAd(g),\quad\text{where}\quad \myAd(g)=
\left(\begin{array}{ccc} 1 & 0&0\\ -3a&\lambda^3&0\\-b&0&\lambda\end{array}\right).
\]
\end{example}

Regarding the infinitesimal generators as differential operators and applying the identity \eqref{addef} to each $\tilde u^\alpha$ in turn, one obtains
\begin{equation}\label{addefut}
(\mbv_1(\tilde u^\alpha)\ \cdots\ \mbv_R(\tilde u^\alpha))=(\xi^\alpha_1(\widetilde\mbu)\ \cdots\ \xi^\alpha_R(\widetilde\mbu))\!\:\myAd(g).
\end{equation}
This yields a useful matrix identity. Define the matrix of characteristics to be the $q\times R$ matrix
\begin{equation}\label{matrixInfsdefn}
\Phi(\mbu)=\big(\xi^\alpha_r(\mbu)\big).
\end{equation}
Then, by the chain rule, \eqref{addefut} amounts to
\begin{equation}\label{addefmat}
\left(\frac{\partial \widetilde{\mbu}}{\partial
	\mbu}\right)\Phi(\mbu)=\Phi(\widetilde\mbu)\!\:\myAd(g),
\end{equation}
where $\left(\partial \widetilde{\mbu}/\partial
	\mbu\right)$ is the Jacobian matrix.
This identity is extended to prolongation spaces with coordinates $z=(\mbu_{J_{0}},\dots,\mbu_J)$, where $J_{0}\leq 0$ and $J\geq 0$, as follows. Define the matrix of prolonged infinitesimals to be
\[
\Phi(z)=\left(
\begin{array}{c}
\Phi(\mbu_{J_0})\\
\vdots\\
\Phi(\mbu_{J})
\end{array}\right).
\]
The infinitesimal generators $\mbv_r$, prolonged to all variables in $z$, satisfy \eqref{addef}, where the tilde now denotes replacement of $z$ by $g\cdot z$. Applying this identity to $g\cdot z$ gives
\begin{equation}\label{infid}
\left(\frac{\partial (g\cdot z)}{\partial
	z}\right)\Phi(z)=\Phi(g\cdot z)\!\:\myAd(g).
\end{equation}

\section{Conservation laws}\label{Conlaws}

In general, the conservation laws are not invariant. However, as we will show,  they are equivariant; indeed, they can be written in terms of invariants and the frame.
Our key result is that for difference frames, the $R$ conservation laws can be written in the form
$$(\es-\id)\{ V(I)\!\:\myAd(\rho_0)\}=0$$
where $\myAd(\rho_0)$ is the adjoint representation of $\rho_0$ and $ V(I)=(V_1\ \cdots\ V_R)$ is a row vector of invariants.

In the standard (that is, not invariantized) calculation of the Euler--Lagrange equations and boundary terms, suppose that the dummy variable $t$ effecting the variation 
is a group parameter for $G$, under which the Lagrangian is invariant. Then
the resulting boundary terms yield conservation laws; this is the difference version of Noether's theorem. So it is useful to identify $t$ with a group parameter by considering the following path in $G$:
\begin{equation}\label{grpath}  
t\mapsto \gamma_r(t)=\Gamma\left(a^1(t),\dots, a^R(t)\right),\quad\text{where}\quad a^r(t)=t\quad\text{and}\quad a^l(t)=0,\ l\ne r;
\end{equation}
{{recall from \S\ref{moregrpsec} that $\mathbf{a}\mapsto \Gamma(\mathbf{a})$ expresses the general group element in terms of the coordinates $\mathbf{a}$.}}
On this path, each $(\mbu_0)'$  at $t=0$ is an infinitesimal generator, from \eqref{infgen}.

For the invariantized calculation, we follow essentially the same route to our result, identifying the dummy variable effecting the variation with each group parameter in turn. The proof of Theorem \ref{mainThmELeqns} uses the identity
\begin{equation}\label{invdt}
\frac{{\rm d}}{{\rm d}t}\,L\left(n,\bkappa,\dots, \es_{J_1}(\bkappa)\right) 
=\big(\mathcal{H}^*\eubka(L)\big)\cdot\bom + (\es -\id)\{A_{\bkappa} +A_{\mathcal{H}}\}.
\end{equation}
Recall that $A_{\bkappa} $ is linear in $\upd\kappa^\alpha/\upd t$ and their shifts, while $A_{\mathcal{H}}$ is linear in the $\sigma^{\alpha}$ and their shifts. As $t$ is a group parameter and each $\kappa^\alpha$ is invariant, 
$\upd \kappa^\alpha/\upd t =0$. Thus, \eqref{invdt} reduces to
\begin{equation}\label{invnoe1}
\big(\mathcal{H}^*\eubka(L)\big)\cdot\bom + (\es -\id)A_{\mathcal{H}}=0,
\end{equation}
so $(\es -\id)A_{\mathcal{H}}=0$ on all solutions of the invariantized Euler--Lagrange equations $\mathcal{H}^*\eubka(L)=0$. We now derive the conservation laws from this condition.

\begin{theorem}\label{thmi} Suppose that the conditions of Theorem  \ref{mainThmELeqns} hold. Write
	\[
	 {A}_{\mathcal{H}}=\mathcal{C}^{j}_{\alpha}\es_j(\sigma^{\alpha}),
	\]
where each $\mathcal{C}^{j}_{\alpha}$ depends only on $n, \bkappa$ and its shifts.
Let $\Phi^{\alpha}(\mbu_0)$ be the row of the matrix of characteristics corresponding to the dependent variable $u^{\alpha}_0$ and denote its invariantization by $\Phi^{\alpha}_0(I)=\Phi^{\alpha}(\rho_0\cdot\mbu_0)$. 
Then the $R$ conservation laws in row vector form amount to
\begin{equation}\label{conlawsfinala}
\mathcal{C}_{\alpha}^{j}\es_j\{\Phi^{\alpha}_0(I)\!\:\myAd\left(\rho_{0}\right)\}=0.
\end{equation}
That is, to obtain the conservation laws, it is sufficient to make the replacement
\begin{equation}\label{InvAdrepEqn}
\sigma^\alpha \mapsto \{\Phi^{\alpha}(g\cdot \mbu_0)\!\:\myAd(g)\} \big\vert_{g=\rho_0}.
\end{equation}
in $A_{\mathcal{H}}$.
\end{theorem}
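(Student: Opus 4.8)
The plan is to derive the $R$ individual laws from the single invariant identity (\ref{invnoe1}), which already asserts that, on solutions of $\mathcal{H}^*\eubka(L)=0$, the boundary term satisfies $(\es-\id)A_{\mathcal{H}}=0$ for \emph{any} path effecting the variation. Writing $A_{\mathcal{H}}=\mathcal{C}^{j}_{\alpha}\es_j(\sigma^\alpha)$, the coefficients $\mathcal{C}^{j}_\alpha$ are invariant (functions of $n,\bkappa$ and its shifts) and so are untouched by the choice of path; the entire content of the theorem is therefore to evaluate the generating differential invariants $\sigma^\alpha$ when the dummy variable $t$ is specialized to each group parameter in turn. Since both $\mathcal{C}^j_\alpha$ and $\sigma^\alpha$ are genuine invariant functions on the first-order jet, (\ref{invnoe1}) is an identity of invariant functions, and the laws are obtained simply by reading off the values of the coordinate functions $\sigma^\alpha$ on the orbit tangents.

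First I would take the path (\ref{grpath}), $t\mapsto\gamma_r(t)$, with $a^r(t)=t$ and the other parameters zero. By Definition \ref{usualinfdefn} and (\ref{infgen}), the velocity of $\mbu_0$ at $t=0$ along this path is the infinitesimal generator $\mbv_r$, whose components $\xi^\alpha_r(\mbu_0)$ form the $r$-th column of the matrix of characteristics $\Phi(\mbu_0)$. The induced action on a velocity is via the Jacobian $\partial(g\cdot\mbu_0)/\partial\mbu_0$, as is confirmed against (\ref{difinvariantsexample}) in the running example, so evaluating $\sigma^\alpha=\iup_0\big((u^\alpha_0)'\big)$ on this path gives the $\alpha$-component of $\big(\partial\widetilde\mbu_0/\partial\mbu_0\big)\mbv_r(\mbu_0)$ with $g=\rho_0$. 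Assembling these values into a row indexed by $r$ then yields the $\alpha$-row of $\big(\partial\widetilde\mbu_0/\partial\mbu_0\big)\Phi(\mbu_0)\big|_{g=\rho_0}$.

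The key step is to invoke the master identity (\ref{addefmat}), $\big(\partial\widetilde\mbu/\partial\mbu\big)\Phi(\mbu)=\Phi(\widetilde\mbu)\myAd(g)$ (whose prolonged version is (\ref{infid})), with $g=\rho_0$. Because $\widetilde\mbu_0=\rho_0\cdot\mbu_0=I_{0,0}$ is the diagonal invariant, the row collapses to $\Phi^\alpha(\rho_0\cdot\mbu_0)\myAd(\rho_0)=\Phi^\alpha_0(I)\myAd(\rho_0)$, which is exactly the substitution rule (\ref{InvAdrepEqn}). Making this replacement in $A_{\mathcal{H}}$ produces the row vector $\mathcal{C}^{j}_{\alpha}\es_j\{\Phi^\alpha_0(I)\myAd(\rho_0)\}$, and (\ref{invnoe1}) guarantees that $(\es-\id)$ annihilates each of its $R$ components on solutions; this is the conservation-law statement (\ref{conlawsfinala}). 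Finally, using $\es_j\myAd(\rho_0)=\myAd(\rho_j)=\myAd(K_{j-1}\cdots K_0)\myAd(\rho_0)$ for a difference frame, the common factor $\myAd(\rho_0)$ can be pulled to the right, recovering the announced form $(\es-\id)\{V(I)\myAd(\rho_0)\}=0$ with $V(I)$ invariant.

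I expect the main obstacle to be the justification carried out in the second step: verifying that restricting the path-independent identity (\ref{invnoe1}) to the group-orbit directions legitimately amounts to evaluating the differential invariants $\sigma^\alpha$ at the tangent vectors $\mbv_r$, while correctly tracking that the frame $\rho_0(z(t))$ is itself moving along the orbit. This is precisely where the adjoint representation earns its place, since identity (\ref{addefmat}) is the bridge relating the fixed generators $\mbv_r$ to their transported counterparts $\widetilde\mbv_r$; once that identity is in hand, the remaining manipulations are purely mechanical.
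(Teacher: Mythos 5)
Your proposal is correct and follows essentially the same route as the paper's proof: conflate the variation parameter $t$ with each group parameter $a^r$ via the path \eqref{grpath}, compute the resulting velocity by the chain rule, and apply the identity \eqref{addefmat}/\eqref{infid} at $g=\rho_0$ so that the replacement $\sigma^\alpha\mapsto\Phi^\alpha_0(I)\!\:\myAd(\rho_0)$ drops out, with \eqref{invnoe1} supplying the vanishing of $(\es-\id)A_{\mathcal{H}}$ on solutions. The subtlety you flag at the end is already resolved by the definition of $\sigma^\alpha$ (the frame is frozen as a generic $g$ before differentiating and only set to $\rho_0$ afterwards), which is exactly how the paper handles it.
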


\begin{proof}
	Recall that
	\begin{equation}\sigma^\alpha=\rho_0\cdot (u^\alpha_0)'=\left(\frac{{\rm d}}{{\rm d}t}\,g\cdot {u^\alpha_{0}}\right)\Big\vert_{g=\rho_0}.
	\end{equation}
	To obtain the conservation laws, conflate $t$ with the group parameter $a^r$, making the replacement
	\begin{equation}\rho_0\cdot (u_0^\alpha)' \mapsto \frac{{\rm d}}{{\rm d}t} \Big\vert_{t=0} \rho_0\cdot \gamma_r(t)\cdot u_0^\alpha\end{equation}
	in $A_{\mathcal{H}}$, where $\gamma_r(t)$ is the path defined in \eqref{grpath}. For any $g\in G$,
	\begin{equation}\label{CLrepl1}  \begin{array}{rcl}
	\ds\frac{{\rm d}}{{\rm d}t} \Big\vert_{t=0} \left(g\cdot \gamma_r(t)\cdot u_0^\alpha\right)&=&  \left(\ds\frac{\partial \left(g\cdot \gamma_r(t)\cdot u_0^\alpha\right)}
	{\partial \left(\gamma_r(t)\cdot u_j^\beta\right)}\right)\Bigg\vert_{t=0}\left(\ds\frac{{\rm d}}{{\rm d}t} \bigg\vert_{t=0} \gamma_r(t)\cdot u_j^\beta\right)\\[20pt]
	&=&\ds\frac{\partial \left(g\cdot u_0^\alpha\right)}{\partial  u_j^\beta}\left(\ds\frac{{\rm d}}{{\rm d}t} \bigg\vert_{t=0} \gamma_r(t)\cdot u_j^\beta\right).
	\end{array}
	\end{equation}
In matrix form, (\ref{CLrepl1}) amounts to the following (taking \eqref{infid} into account):
	\[
		\ds\frac{{\rm d}}{{\rm d}t} \Big\vert_{t=0} \left(g\cdot \gamma_r(t)\cdot u_0^\alpha\right) = \left(\ds\frac{\partial (g\cdot z)}{\partial z}\,\Phi(z)\right)_{(u_0^\alpha,r)}
	= \big(\Phi(g\cdot z)\!\:\myAd(g)\big)_{(u_0^\alpha,r)}\,,
	\]
where $(u_0^\alpha,r)$ denotes the entry in the row corresponding to $u_0^\alpha$ and the $r^{\mathrm{th}}$ column. Setting $g=\rho_0$, the required replacement is
	$$\sigma^\alpha \mapsto \big(\Phi(\rho_0\cdot z)\!\:\myAd(\rho_0)\big)_{(u_0^\alpha,r)}=\big(\Phi(\rho_0\cdot \mbu_0)\!\:\myAd(\rho_0)\big)^\alpha_r\,. $$
By using each parameter $a^r$ in turn, $\sigma^\alpha$ is replaced by a row vector,
	$$\sigma^\alpha \mapsto \Phi_0^\alpha(I)\!\:\myAd(\rho_0),$$
as required.
\end{proof}

By the prolongation formula $\es_j(\rho_0)=\rho_{j}$, the conservation laws amount to
\begin{equation}
(\es-\id)\left(\mathcal{C}^{\alpha}_j\big(\es_j\Phi^{\alpha}_{0}(I)\big)\!\:\myAd\left(\rho_{j}\right)\right)=0,
\end{equation}
As  $\myAd(\rho_j)\!\:\myAd(\rho_0)^{-1}=\myAd(\rho_j\rho_0^{-1})$ is invariant, this leads to the following corollary.

\begin{corollary}\label{goodCLs}
	The conservation laws for a difference frame may be written in the form
	\begin{equation}\label{CLsgoodform} (\es-\id)\{V(I)\!\:\myAd(\rho_0)\} =0\end{equation}
	where $ V(I)=(V_1\ \cdots\ V_R)$ is an invariant row vector. Specifically,
	\begin{equation}
	V(I)=\mathcal{C}_{\alpha}^j\big(\es_j\Phi^{\alpha}_{0}(I)\big)\!\:\myAd\left(\rho_j\rho_{0}^{-1}\right).
	\end{equation}
\end{corollary}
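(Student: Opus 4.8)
The plan is to start from the conservation-law identity furnished by Theorem~\ref{thmi} and to rearrange it by purely algebraic manipulations, using the defining prolongation property $\es_j\rho_0=\rho_j$ of a difference frame together with the homomorphism property of the adjoint representation. Concretely, Theorem~\ref{thmi} presents each of the $R$ laws as a total difference,
\[
(\es-\id)\bigl\{\mathcal{C}^{j}_{\alpha}\,\es_j\bigl(\Phi^{\alpha}_{0}(I)\,\myAd(\rho_0)\bigr)\bigr\}=0,
\]
so it suffices to recast the bracketed expression into the form $V(I)\,\myAd(\rho_0)$ with $V(I)$ invariant.

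First I would distribute the shift $\es_j$ across the product inside the braces. Since $\es_j$ acts as an algebra homomorphism on functions of the prolongation coordinates, it splits the product as $\bigl(\es_j\Phi^{\alpha}_{0}(I)\bigr)\bigl(\es_j\myAd(\rho_0)\bigr)$. Because the matrix $\myAd(g)$ is built solely from the entries of $g$ through the identity \eqref{addef}, applying $\es_j$ entrywise commutes with forming $\myAd$, and the difference-frame relation $\es_j\rho_0=\rho_j$ then yields $\es_j\myAd(\rho_0)=\myAd(\es_j\rho_0)=\myAd(\rho_j)$. This commutation is the one point that genuinely needs justification, and I regard it as the main (if mild) obstacle: one must confirm that $\es$ and the assignment $g\mapsto\myAd(g)$ commute, which they do precisely because $\myAd$ is a function of the matrix entries of its argument and $\es$ is applied to those entries.

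Next I would factor $\myAd(\rho_0)$ out on the right. Using that $\myAd$ is a representation of $G$, write $\myAd(\rho_j)=\myAd\!\left(\rho_j\rho_0^{-1}\right)\myAd(\rho_0)$, so the laws read $(\es-\id)\bigl\{\mathcal{C}^{j}_{\alpha}\bigl(\es_j\Phi^{\alpha}_{0}(I)\bigr)\myAd\!\left(\rho_j\rho_0^{-1}\right)\myAd(\rho_0)\bigr\}=0$. Setting $V(I)=\mathcal{C}^{j}_{\alpha}\bigl(\es_j\Phi^{\alpha}_{0}(I)\bigr)\myAd\!\left(\rho_j\rho_0^{-1}\right)$ gives exactly the claimed shape \eqref{CLsgoodform}.

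It then remains to verify that $V(I)$ is an invariant row vector, which I would do factor by factor. The coefficients $\mathcal{C}^{j}_{\alpha}$ depend only on $n$, $\bkappa$ and its shifts and are therefore invariant; each $\es_j\Phi^{\alpha}_{0}(I)$ is a shift of the invariant $\Phi^{\alpha}_{0}(I)=\Phi^{\alpha}(\rho_0\cdot\mbu_0)$, hence invariant, since a shift of an invariant is again an invariant (cf.\ \eqref{shiftoninvs}); and $\myAd\!\left(\rho_j\rho_0^{-1}\right)$ is invariant because equivariance gives $\rho_j(g\cdot z)\rho_0(g\cdot z)^{-1}=\rho_j(z)\rho_0(z)^{-1}$ and $\myAd$ of an invariant group element is an invariant matrix. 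A product of invariant factors is invariant, so $V(I)$ is invariant and the corollary follows.
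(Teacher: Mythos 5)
Your proposal is correct and follows essentially the same route as the paper: apply the prolongation property $\es_j\rho_0=\rho_j$ to rewrite $\es_j\{\Phi^{\alpha}_{0}(I)\,\myAd(\rho_0)\}$ as $\bigl(\es_j\Phi^{\alpha}_{0}(I)\bigr)\myAd(\rho_j)$, then factor via $\myAd(\rho_j)=\myAd\!\left(\rho_j\rho_0^{-1}\right)\myAd(\rho_0)$ and note that $\myAd\!\left(\rho_j\rho_0^{-1}\right)$ is invariant. Your additional justification that $\es_j$ commutes with $g\mapsto\myAd(g)$, and the factor-by-factor invariance check, simply make explicit what the paper leaves implicit.
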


\begin{corollary}
On any solution of the invariantized Euler--Lagrange equations,
\begin{equation}
V(I)\!\:\myAd\left(\rho_{0}\right)=c,
\end{equation}
for some constant row vector $c=(c_1\ \cdots\ c_R)$.
\end{corollary}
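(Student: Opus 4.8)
The plan is to read this off directly from Corollary \ref{goodCLs}, exploiting the fact that there is a single independent variable $n\in\mathbb{Z}$ ranging over a connected interval. First I would recall that, on any solution of the invariantized Euler--Lagrange equations $\mathcal{H}^*\eubka(L)=0$, Corollary \ref{goodCLs} supplies the conservation law in the form
\[
(\es-\id)\{V(I)\!\:\myAd(\rho_0)\}=0.
\]
Writing $W=V(I)\!\:\myAd(\rho_0)$ for the invariant row vector inside the brace, this identity says precisely that $\es W=W$; that is, $W$ takes the same value at $n$ and at $n+1$ for every $n$ at which both sides are defined.

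The second step is to interpret this shift-invariance. Because the summation domain is a connected interval in $\mathbb{Z}$, the relation $\es W=W$ propagates the common value across the whole interval, so each component of $W$ is constant in $n$; equivalently, each entry of $W$ is a first integral of the Euler--Lagrange system. This is exactly the observation already made immediately after Theorem \ref{Noether1}: with only one independent variable, a total difference that vanishes telescopes to a constant. Hence there exists a constant row vector $c=(c_1\ \cdots\ c_R)$, independent of $n$, with $V(I)\!\:\myAd(\rho_0)=c$, which is the assertion to be proved.

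I do not expect a substantive obstacle, since the statement is simply the elementary discrete integration of the conservation law of Corollary \ref{goodCLs}. The only points needing care are, first, the connectedness of the domain (so that constancy genuinely propagates across the interval rather than allowing distinct values on separate components), and second, the scope of the word ``constant'': the vector $c$ is constant in $n$ but will in general depend on the particular solution chosen and on the frame, since $\myAd(\rho_0)$ is assembled from $\rho_0$. It is worth flagging that the invariant piece $V(I)$ and the equivariant factor $\myAd(\rho_0)$ combine into a quantity that is itself neither invariant nor purely geometric, which is why the conserved object is the product $V(I)\!\:\myAd(\rho_0)$ rather than $V(I)$ alone.
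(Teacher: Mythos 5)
Your argument is correct and coincides with what the paper intends: the corollary is an immediate consequence of Corollary \ref{goodCLs}, since $(\es-\id)\{V(I)\!\:\myAd(\rho_0)\}=0$ on solutions means the bracketed quantity is a first integral, exactly as the paper already observed after Theorem \ref{Noether1} for the non-invariantized laws. Your remarks on connectedness of the domain and on the solution-dependence of $c$ are sensible but not substantively different from the paper's (unstated) reasoning.
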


As the conservation laws depend only on the terms arising from $A_{\mathcal{H}}$,  the laws can be calculated for all Lagrangians in the relevant invariance class, in terms
of the $\eubka (L)$, independently of the precise form that $L=L(n,\bkappa,\dots, \es_{J_1}\bkappa)$ takes.

\begin{example}(Example \ref{simpleExOne} cont.)\  
Redoing an earlier calculation, but keeping track of the terms in $A_{\mathcal{H}}$, 
\begin{align*}
 \displaystyle\frac{{\rm d}}{{\rm d}t} L(\eta, \kappa, \es \kappa) &=\iup_0\{\eux(L)\}\,\sigma^x + \iup_0\{\euu(L)\}\,\sigma^u+(\es-\id)A_{\bkappa}\\[10pt]
&+ (\es -\id) \left( \es_{-1} \{(\kappa-1)^{3}\;\! \euet(L)\}\: \sigma^x \right) \\[10pt]
&+ (\es -\id) \left( -\,\es_{-1}\{3\eta(\kappa-1) \euet(L)
+\kappa(\kappa-1) \euka(L)\}\: \sigma^u \right)\\[10pt]
&+ (\es_{2} -\id)\left( \es_{-2}\{(\kappa-1)(\kappa_1-1) \euka(L)\}\: \sigma^u\right),
\end{align*}
where
\[
A_{\bkappa}=\frac{{\rm d} \kappa}{{\rm d}t}\,\es_{-1}\! \left(\frac{\partial L}{\partial \es \kappa} \right).
\]
Hence the terms that contribute to the conservation laws come from
\begin{equation}\label{BTexposEg}
A_{\mathcal{H}}=\mathcal{C}_x^0\, \sigma^x+\mathcal{C}_u^0\,\sigma^u+\mathcal{C}_u^1\,\es(\sigma^u),
\end{equation}
where
\begin{align*}
\mathcal{C}_x^0&= \es_{-1} \{(\kappa-1)^{3} \euet(L)\},\\[12pt]
\mathcal{C}_u^0&=-\,\es_{-1}\{3\eta(\kappa-1) \euet(L) +\kappa(\kappa-1)\euka(L)\} + \es_{-2}\{(\kappa-1)(\kappa_1-1) \euka(L)\},\\[12pt]
\mathcal{C}_u^1&=\es_{-1}\left\{(\kappa-1)(\kappa_1-1) \euka(L)\right\}.
\end{align*}
For this running example,
\[
\myAd(\rho_0)
=\left(\begin{array}{ccc} 1 & 0 & 0 \\ \ds\frac{3x_0}{(u_1-u_0)^{3}} & \ds\frac{1}{(u_1-u_0)^{3}} & 0 \\ \ds\frac{u_0}{u_1-u_0}  & 0 & \ds\frac{1}{u_1-u_0} \end{array}\right)
\]
and the invariantized form of the matrix of characteristics restricted to the variables $x_0$ and $u_0$ is
\[
\Phi_0(I)=\left(\begin{array}{c}\Phi^x_0\\ \Phi^u_0\end{array}\right)= \iup_0 \begin{pmatrix} x_0 & 1 & 0 \\ u_0 & 0 &1 \end{pmatrix}=\begin{pmatrix} 0 & 1 & 0 \\ 0 & 0 &1 \end{pmatrix}.
\]
Therefore, by \eqref{conlawsfinala}, the conservation laws are of the form $(\es -\id)A=0$, where
\[ \begin{aligned}
A&= \mathcal{C}_x^0 (0\ 1\ 0)\!\:\myAd(\rho_0) + \mathcal{C}_u^0 (0\ 0\ 1)\!\:\myAd(\rho_0)
+ \mathcal{C}_u^1 \es\big\{(0\ 0\ 1)\!\:\myAd(\rho_0)\big\}\\[2pt]
&= \left[\mathcal{C}_x^0(0\ 1\ 0) + \mathcal{C}_u^0 (0\ 0\ 1) + \mathcal{C}_u^1  (0\ 0\ 1)\,\myAd(\rho_1\rho_0^{-1})
\right]\myAd(\rho_0);
 \end{aligned}
\]
the last equality writes the conservation laws in the form of Equation (\ref{CLsgoodform}). It remains only to write the matrix $\myAd(\rho_1 \rho_0^{-1})$ in terms of $\eta$ and $\kappa$\:\!:
\[\myAd(\rho_1 \rho_0^{-1})=\myAd(K_0)=\iup_0(\myAd(\rho_1))=\left(\begin{array}{ccc} 1 & 0 & 0 \\ \ds\frac{3\eta}{(\kappa-1)^{3}} & \ds\frac{1}{(\kappa-1)^{3}} & 0 \\ \ds\frac{1}{\kappa-1}  & 0 & \ds\frac{1}{\kappa-1} \end{array}\right).
\] 
More generally, once one has solved for the frame, each $\myAd(\rho_j \rho_0^{-1})$ can be written in terms of $\bkappa$ and its shifts by using invariantization followed by the recurrence formula.
Doing the calculation, we find that $A=V(I)\myAd(\rho_0)$, where
\[
 V(I)=\left(\begin{array}{c}\es_{-1}\big\{(\kappa\!-\!1) \euka(L)\big\}\\[5pt]
\es_{-1} \big\{(\kappa\!-\!1)^{3}\;\! \euet(L)\big\}\\[5pt]
-\es_{-1}\big\{3\eta(\kappa\!-\!1) \euet(L)+(\kappa\!-\!1)^2\;\! \euka(L)\big\} +\es_{-2}\big\{(\kappa\!-\!1)(\kappa_1\!-\!1) \euka(L)\big\}
\end{array}\right)^{\!\!T}.
\]
For the particular Lagrangian \eqref{babylagrangian}, the solutions \eqref{kapsol}, \eqref{etasol} of the invariantized Euler--Lagrange equations yield
\begin{align}\label{vsol}
V_1&=-\,\tfrac{3}{2}\,\eta_{-1}(\kappa_{-1}\!-\!1)^{-3/2}=-\tfrac{3}{4}k_2\!\left[k_1\!+\!k_1^{-1}\!+\!(k_1\!-\!k_1^{-1})(2n\!-\!1)(-1)^n\right]\!+\tfrac{3}{2}k_3(-1)^n,\nonumber\\
V_2&=(\kappa_{-1}\!-\!1)^{3/2}=k_1^{3(-1)^{n+1}},\\
V_3&=-\tfrac{3}{2}\left[\eta(\kappa\!-\!1)^{-3/2}+\eta_{-1}(\kappa_{-1}\!-\!1)^{-3/2}\right]= -3k_2k_1^{(-1)^{n+1}}.\nonumber
\end{align}
In the coordinates we have used, the first element of $(\es -\id)A=0$ is the conservation law due to the scaling invariance, the second is due to invariance under translation of $x$, and the third is due to translation  
of $u$.
\end{example}

\begin{table}
	\centering{
		\begin{tabular}{|p{0.5cm}|p{0.4cm}| p{9cm} |}
			\hline
			\vspace*{-6pt}$\phi^x$&\vspace*{-6pt}$\phi^u$&\vspace*{-6pt}Invariantized first integral: $V(I)=\iup_0\{A_\mbu(n,\bphi)\}$\\[2pt]
			\hline 
			$3x$&$u$&$\phantom{\Bigg\vert}\iup_0\left(\es_{-1}\ds\frac{\partial\:\! \upL}{\partial u_{2}}\right)$\\[15pt]
			\hline
			1&0&$\phantom{\Bigg\vert}\iup_0\left(\es_{-1}\ds\frac{\partial\:\! \upL}{\partial x_{1}}\right)$
			\\ [12pt]\hline
			0&1&$\phantom{\Bigg\vert}\iup_0\left(\es_{-1}\ds\frac{\partial\:\! \upL}{\partial u_{2}}-\ds\frac{\partial\:\! \upL}{\partial u_{0}}\right)$\\ \hline
		\end{tabular}
	\caption{Infinitesimals and invariantized first integrals for the Lagrangian (\ref{babylagrangian}).}\label{invfi}}
\end{table}
\smallskip

\begin{remark}\textbf{Calculation of the conservation laws}
	There is another way to calculate the laws for difference frames. By 
	Corollary \ref{goodCLs}, one can use symbolic software to calculate the conservation laws in the original variables, 
	and then use the Replacement  Rule, Theorem \ref{reprule},
	to obtain the invariantized first integrals $ V(I)$ (see Table \ref{invfi}, {which is obtained from Table \ref{tableEG} by applying $\iota_0$, taking the normalisation equations into account}). This is because the Replacement Rule sends $\rho_0$ to the identity matrix. 
	The recurrence formulae can then be used to write $ V(I)$ in terms of the generating invariants. 
	This means that the methods to solve for the extremals in the original variables, given in the next section, can still be used without having to 
	perform the more complex, invariantized summation by parts computation.
\end{remark}


\section{Solving for the original dependent variables $\mbu_0$, once the generating invariants are known}\label{SolveSec}

In this section we show how to find the solutions $\mbu_0$ to the original Euler--Lagrange equations, once the invariant Euler--Lagrange equations have been solved for the generating invariants $\kappa^\alpha$. The starting-point is that $\bkappa$ is a known function of $n$ and some arbitrary constants (which are determined if initial data are specified). There are three methods, depending on the information available. We use the running expository example to illustrate each method.

\subsection{How to solve for $\mbu_0$ from the invariants, knowing only the Maurer--Cartan matrix.}
This method can be used for any invariant difference system. Indeed, when the adjoint representation of the Lie group is trivial, it is the only available method. 
 
Assume that the Maurer--Cartan matrix $ K_0=\rho_{1}\rho_0^{-1}$ is known in terms of the generating invariants, so that it can be written in terms of $n$ (and some arbitary constants). This yields a system of recurrence relations for $\rho_0$, namely
\begin{equation}\label{recrelsrhoparams}\rho_{1} = K_0 \rho_0.\end{equation}

\begin{definition} The system (\ref{recrelsrhoparams}) is known as the set of \emph{Maurer--Cartan equations} for the frame $\rho$.
 \end{definition}

Once the Maurer--Cartan equations for $\rho_0$ have been solved, one can obtain $\mbu_0$ from
\begin{equation}\label{getufromrho} u_0^\alpha = \rho_0^{-1} \left( \rho_0\cdot u_0^\alpha\right)=\rho_0^{-1} I^{\alpha}_{0,0}\,;\end{equation}
the invariant $I^{\alpha}_{0,0}$ is known, either from the normalization equations or from the set of generating invariants already determined.
\begin{example}(Example \ref{simpleExOne} cont.)\  
From Equation (\ref{examplebabyframeA:C}), the Maurer--Cartan matrix is
\[
K_0=\left(\begin{array}{ccc} (\kappa-1)^{-3} & 0 & -\eta (\kappa-1)^{-3}\\ 0 & (\kappa -1)^{-1} & - (\kappa -1)^{-1}\\ 0&0&1\end{array}\right).
\]
Hence, setting $\lambda_k$, $a_k$ and $b_k$ to be the parameter values for the group element $\rho_k$, the set of Maurer--Cartan equations is
$$\left(\begin{array}{ccc} \lambda_{1}^3 &0&a_{1}\\ 0&\lambda_{1} & b_{1}\\0&0&1\end{array}\right) 
=K_0
\left(\begin{array}{ccc} \lambda_0^3 &0&a_0\\ 0&\lambda_0 & b_0\\0&0&1\end{array}\right),$$
which amount to three recurrence relations for the group parameters:
\begin{equation}\label{exabeqnssec7}\begin{array}{rcl}
\lambda_{1} &=& (\kappa-1)^{-1}\lambda_0,\\
a_{1}&=&(\kappa-1)^{-3} \left(a_0 -\eta\right),\\
b_{1}&=&(\kappa-1)^{-1}\left(b_0-1\right).
\end{array}\end{equation}
Now suppose that we know the general solution of these recurrence relations. The normalization equations give $\rho_0\cdot x_0=0$ and $\rho_0\cdot u_0=0$, so
$$\begin{array}{rclll}
x_0&=& \rho_0^{-1}\cdot (\rho_0\cdot x_0) &= \lambda_0^{-3}(\rho_0\cdot x_0 -a_0) &= -\lambda_0^{-3}{a_0},\\
u_0&=&\rho_0^{-1}\cdot (\rho_0\cdot u_0) &= \lambda_0^{-1}(\rho_0\cdot u_0 -b_0) &= -\lambda_0^{-1}{b_0}.
\end{array}$$
\end{example}

\subsection{Solving for $\mbu_0$ from the invariants and conservation laws when the adjoint representation is nontrivial}\label{methodforEulerElas}
This method works when the adjoint representation is not the identity representation.
The conservation laws give
\begin{equation}\label{onedclsolve}V(I)\!\:\myAd(\rho_0) =c\end{equation}
where $c$ is a constant row vector. The components $V_i$ depend only on $\bkappa$, and are therefore known functions of $n$.
As $\myAd(g)$ is known in terms of the group parameters, Equation (\ref{onedclsolve}) yields equations for these parameters.

If the adjoint action of the group on its Lie algebra is not transitive, the algebraic system of equations for the parameters may be under-determined. To complete the solution, it is then necessary to supplement this system with the Maurer-Cartan equations (\ref{recrelsrhoparams}). 
Even so, the algebraic equations coming from the conservation laws can ease, considerably, the problem of solving the Maurer--Cartan equations alone.
Once $\rho_0$ is known as a function of $n$, Equation (\ref{getufromrho}) yields $\mbu_0$, as before.

\begin{example}(Example \ref{simpleExOne} cont.)\  
For the running example, (\ref{onedclsolve}) is
$$(V_1\ V_2\ V_3)\left(\begin{array}{ccc} 1& 0 & 0\\ -3 a_0 & \lambda_0^3 & 0\\ -b_0&0&\lambda_0\end{array}\right)=(c_1\ c_2\ c_3).$$
We obtain immediately
$\lambda_0=c_3/V_3$ and hence, a first integral of the Euler--Lagrange equations:
 \begin{equation}\label{firstintrunexsec7}\frac{V_2}{(V_3)^3}=\frac{c_2}{(c_3)^3}\,.\end{equation}
The remaining equation is a linear expression for $a_0$ and $b_0$,
\begin{equation}\label{sec7part2third} 
3a_0V_2+b_0V_3-V_1+c_1=0.
\end{equation}
If one of the second and third equations of (\ref{exabeqnssec7}) can be solved, \eqref{sec7part2third} yields the remaining parameter.
\end{example}

\subsection{Solving for  $\mbu_0$ from $\bkappa$ from the conservation laws, and with a nontrivial adjoint representation of $\rho$ which is known as a function of $\mbu_0$}
In this case we consider the conservation laws
$V(I)\!\:\myAd(\rho_0)=c$, taking into account that $\rho_0(\mbu)$ is known as a function of the dependent variables. 
One can sometimes derive explicit equations for $\mbu$ which are simple to solve. We illustrate the possibilities in the running example.
\begin{example}(Example \ref{simpleExOne} cont.)\  
The conservation laws amount to
\begin{equation}\label{case3sec7mxeqn}(V_1\ V_2\ V_3)\left(\begin{array}{ccc} 1& 0 & 0\\[6pt] \ds\frac{3x_0}{(u_{1}-u_0)^{3}} & \ds\frac{1}{(u_{1}-u_0)^{3}} & 0\\[12pt] \ds\frac{u_0}{u_{1}-u_0}&0&\ds\frac{1}{u_{1}-u_0}\end{array}\right)=(c_1\ c_2\ c_3).
\end{equation}
We obtain once more the first integral (\ref{firstintrunexsec7})  and the simple recurrence relation
\begin{equation}\label{urr}
u_{1}-u_0=V_3/c_3.
\end{equation}
Once this is solved for $u_0$, one can obtain $x_0$ from the first column of (\ref{case3sec7mxeqn}).

For the Lagrangian \eqref{babylagrangian}, each $V_r$ is given \eqref{vsol} in terms of $n$ and $k_i,\ i=1,2,3$. The first integral \eqref{firstintrunexsec7} yields $c_3=-3k_2\:\!c_2^{1/3}$. Assuming that $k_2$ is nonzero, it is convenient to define $k_4=c_2^{-1/3}$; then the general solution of \eqref{urr} is
\[
u_0=\tfrac{1}{4}k_4\left[2(k_1\!+\!k_1^{-1})n+(k_1\!-\!k_1^{-1})(-1)^n+k_5\right],
\]
where $k_5$ is an arbitrary constant. Finally, the first column of \eqref{case3sec7mxeqn} gives
\[
x_0=k_4^3\left[k_2nk_1^{(-1)^n}-\tfrac{1}{2}k_3(-1)^n+k_6\right],
\]
where $k_6=c_1/3+k_2(k_1\!+\!k_1^{-1}\!+\!k_5)/4$ is the remaining arbitrary constant.
\end{example}


\section{Lagrangians invariant up to a divergence}\label{Divgen}

So far, we have considered only Lagrangians $\upL$ that are invariant under a Lie group $G$ of variational symmetries, that is, $g\cdot\upL=\upL$ for all $g\in G$. However, Noether's theorem merely requires the action $\mathcal{L}$ to be invariant. This broader definition of a Lie group of variational symmetries requires that for each $g\in G$, there exists a function $P_g$ of $n$ and a finite number of shifts of $\mbu_0$ such that
\[
g\cdot\upL=\upL+(\es-\id)P_g\,.
\]
Without loss of generality, we set $P_e=0$.
This useful generalization can be treated in the invariant framework by introducing a new dependent variable $\zeta$ such that
\begin{equation}\label{zetact}
g\cdot\zeta_j=\es_j\big(\zeta_0-P_g\big),\qquad  j\in\mathbb{Z}.
\end{equation}

\begin{lemma}
	With the above notation, the group action on $(\es-\id)\zeta$ defined by \eqref{zetact} is a left action. Furthermore, the modified Lagrangian
	\begin{equation}\label{lbar}
	\overline{\upL}=\upL+(\es-\id)\zeta_0
	\end{equation}
	is invariant under all $g\in G$.
\end{lemma}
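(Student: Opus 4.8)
The lemma bundles two claims: that \eqref{zetact} defines a left action on $(\es-\id)\zeta$, and that $\overline{\upL}$ is invariant. The plan is to treat them in that order, since the invariance assertion falls out quickly once the correct bookkeeping for $P_g$ is in place. Throughout I read $g\cdot\upL$ as $\upL$ evaluated at $g\cdot\mbu$, and I repeatedly use that the shift commutes with the action, $g\cdot\mbu_j=\es_j(g\cdot\mbu_0)$, which holds because all the fibre structures are identical (the assumption made in \S\ref{prolspace}).

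The invariance claim is a short direct computation. Since $\es$ commutes with the action, $g\cdot\{(\es-\id)\zeta_0\}=(\es-\id)(g\cdot\zeta_0)=(\es-\id)(\zeta_0-P_g)=(\es-\id)\zeta_0-(\es-\id)P_g$. Combining this with the defining relation $g\cdot\upL=\upL+(\es-\id)P_g$, the two $(\es-\id)P_g$ terms cancel and $g\cdot\overline{\upL}=\upL+(\es-\id)\zeta_0=\overline{\upL}$ for every $g\in G$.

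The left-action claim carries the real content. I would regard \eqref{zetact} together with the original action $\mbu\mapsto g\cdot\mbu$ as a single transformation $T_g$ of $(\mbu,\zeta)$ and verify $T_{g_1}\circ T_{g_2}=T_{g_1g_2}$. Composing the coordinate maps, the $\zeta$-component of $T_{g_1}\circ T_{g_2}$ is $\es_j\{\zeta_0-P_{g_2}(\mbu)-P_{g_1}(g_2\cdot\mbu)\}$, whereas that of $T_{g_1g_2}$ is $\es_j\{\zeta_0-P_{g_1g_2}(\mbu)\}$; the $\mbu$-components already agree by the left-action property $(g_1g_2)\cdot\mbu=g_1\cdot(g_2\cdot\mbu)$. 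Thus the claim reduces to the cocycle identity $P_{g_1g_2}(\mbu)=P_{g_2}(\mbu)+P_{g_1}(g_2\cdot\mbu)$. To establish this I would apply $\es-\id$ to the right-hand side, use $(\es-\id)P_g=g\cdot\upL-\upL$ together with the interchange of $\es$ and the action to obtain $(\es-\id)\{P_{g_2}(\mbu)+P_{g_1}(g_2\cdot\mbu)\}=\upL(g_1\cdot(g_2\cdot\mbu))-\upL(\mbu)$, and then invoke the left-action property of the $\mbu$-action once more to identify this with $\upL((g_1g_2)\cdot\mbu)-\upL(\mbu)=(\es-\id)P_{g_1g_2}(\mbu)$.

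The main obstacle, and the reason the lemma speaks of $(\es-\id)\zeta$ rather than $\zeta$, is that $P_g$ is pinned down by $g\cdot\upL=\upL+(\es-\id)P_g$ only up to $\ker(\es-\id)$, i.e.\ up to a constant in $n$. Consequently the cocycle identity, and with it $T_{g_1}\circ T_{g_2}=T_{g_1g_2}$, holds only modulo $\ker(\es-\id)$, so $T_g$ need not be a genuine left action on $\zeta$ itself. Applying $\es-\id$ annihilates exactly this ambiguity, which is why the action descends to an honest left action on $(\es-\id)\zeta$. I expect the care in the write-up to go into tracking this ambiguity consistently and into justifying the interchange of $\es$ with the action in the term $P_{g_1}(g_2\cdot\mbu)$; the algebra itself is routine.
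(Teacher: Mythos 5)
Your proposal is correct and follows essentially the same route as the paper: both arguments reduce the left-action claim to a cocycle identity for $P_g$ (the paper's version reads $(\es-\id)P_{hg}=(\es-\id)(P_h+h\cdot P_g)$), derived from the definition of $P_g$ together with the prolongation formula, and both verify the invariance of $\overline{\upL}$ by the same direct cancellation of the two $(\es-\id)P_g$ terms. Your explicit observation that $P_g$ is determined only modulo $\ker(\es-\id)$ --- so that the action descends to a genuine left action only on $(\es-\id)\zeta$ rather than on $\zeta$ itself --- is a point the paper leaves implicit, and your ordering of the cocycle identity is the one consistent with composing the point transformations in the standard order.
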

\begin{proof}
	Let $g,h\in G$. By definition, taking the left action of $G$ on $\mbu$ into account,
	\[
	(\es-\id)P_{hg}=(hg)\cdot\upL -\upL=h\cdot(\upL+(\es-\id)P_g)-\upL=(\es-\id)(P_h+h\cdot P_g),
	\]
	where the last equality is a consequence of the prolongation formula.
	Therefore
	\[
	(hg)\cdot\big((\es-\id)\zeta_0\big)=(\es-\id)(\zeta_0-P_{hg})=(\es-\id)(\zeta_0-P_h-h\cdot P_g)=h\cdot\big(g\cdot\big((\es-\id)\zeta_0\big)\big),
	\]
	which extends to a left action on all $(\es-\id)\zeta_j$ by the prolongation formula. Invariance of the modified Lagrangian follows:
	\[
	g\cdot\overline{\upL}=g\cdot\upL+g\cdot(\zeta_1-\zeta_0)=\upL+(\es-\id)P_g+(\es-\id)(\zeta_0-P_g)=\overline{\upL}\,.\qedhere
	\]
\end{proof}

As $\overline{\upL}$ and $\upL$ differ by a divergence, they yield the same Euler--Lagrange equations. Therefore $\overline{\upL}$ can be used to obtain the conservation laws by the difference moving frame method. Only the original dependent variables can be used for normalization, because $\zeta$ does not appear in the Euler--Lagrange equations. Moreover, $\zeta$ does not appear in $A_{\mathcal{H}}$ for the modified Lagrangian, so the expression for the conservation laws in terms of invariants and $\myAd(\rho_0)$ is unaffected by the modification. 

An example appears in the next section, \S\ref{Exgen}.

\section{Explicit dependence on $n$ changes nothing}\label{Exgen}

So far, we have considered examples whose Lagrangian and variational symmetries do not depend explicitly on $n$. However, the difference frame reduction takes place on a prolongation space over a single (arbitrary) base point $n$. Consequently, $n$ should be regarded in the calculations  as a parameter.

Consider the Lagrangian  
\[
\upL=\frac{(v_2-v_0)(v_3-v_1)}{(u_0-v_0)(u_1-v_1)}+2\ln(u_0-v_0).
\]
While this Lagrangian does not depend explicitly on $n$, there is a three-parameter Lie group of variational symmetries depending explicitly on $n$, $g\cdot(u,v)=(\tilde{u},\tilde{v})$, where
\begin{align}\label{twistact}
\tilde{u}&= \exp\left\{(-1)^n a_1\right\} u + a_2 + a_3(-1)^n=u\cosh a_1+a_2+(-1)^n\big\{u\sinh a_1+a_3\big\},\nonumber\\
\tilde{v}&= \exp\left\{(-1)^n a_1\right\} v + a_2 + a_3(-1)^n=v\cosh a_1+a_2+(-1)^n\big\{v\sinh a_1+a_3\big\}.
\end{align}
The infinitesimal generators $\mbv_r$ corresponding to $a_r$ are
\[
\mbv_1=(-1)^nu\,\partial_{u}+(-1)^nv\,\partial_{v},\qquad \mbv_2=\partial_{u}+\partial_{v},\qquad \mbv_3=(-1)^n\partial_{u}+(-1)^n\partial_{v}.
\]
So
\begin{equation}\label{Adex3}
(\mbv_1\ \cdots\ \mbv_R)=(\widetilde\mbv_1\ \cdots\ \widetilde\mbv_R)\!\;\myAd(g),\ \text{where}\ \myAd(g)=\begin{pmatrix}1 & 0  & 0\\ -a_3 & \cosh a_1&\sinh a_1\\ -a_2&\sinh a_1&\cosh a_1\end{pmatrix}\!.
\end{equation}
For this group action, the standard representation is not faithful, so we shall use the Adjoint representation, which is faithful.

In this example, $\upL$ is not invariant:
\[
g\cdot\upL=\upL+2a_1(-1)^n=\upL+(\es-\id)\left\{a_1(-1)^{n+1}\right\}.
\]
Accordingly, by the result in \S\ref{Divgen}, we define a new dependent variable $\zeta$ and the (invariant) modified Lagrangian $\overline{\upL}(u_0,v_0,\zeta_0,u_1,v_1,\zeta_1,v_2,v_3)$ such that
\[
\widetilde{\zeta_0}=\zeta_0+a_1(-1)^n,\qquad \widetilde{\zeta_1}=\zeta_1+a_1(-1)^{n+1},\qquad \overline{\upL}=\upL+(\es-\id)\zeta_0\,;
\]
here $\widetilde{\zeta_j}$ denotes $g\cdot\zeta_j$. The extension of the infinitesimal generators $\mbv_r$ to include their action on $\zeta$ is as follows:
\[
\mbv_1=(-1)^nu\,\partial_{u}+(-1)^nv\,\partial_{v}+(-1)^n\partial_\zeta,\quad \mbv_2=\partial_{u}+\partial_{v},\quad \mbv_3=(-1)^n\partial_{u}+(-1)^n\partial_{v}.
\]
These generators satisfy the same commutation relations as their unextended counterparts, so \eqref{Adex3} is unchanged.

We choose the moving frame $\rho_0$ defined by the normalization
\[
\rho_0\cdot u_{0}=1,\qquad \rho_0\cdot v_{0}=0,\qquad \rho_0\cdot v_{1}=0,
\]
which amounts to
\begin{align}\label{param}
a_1&=(-1)^{n+1}\ln(u_0-v_0),\nonumber\\ a_2&=-\tfrac{1}{2}\left[v_1(u_0-v_0)+v_0/(u_0-v_0)\right],\\ a_3&=\tfrac{1}{2}(-1)^n\left[v_1(u_0-v_0)-v_0/(u_0-v_0)\right]\nonumber.
\end{align}
Consequently, the frame (in the Adjoint representation) is
\[
\rho_0=\myAd(\rho_0)=\begin{pmatrix}1 & 0  & 0\\ -a_3 & \cosh a_1&\sinh a_1\\ -a_2&\sinh a_1&\cosh a_1,\end{pmatrix}
\]
where $a_1, a_2$ and $a_3$ are given by \eqref{param}; hence
\begin{equation}\label{sca1}
\cosh a_1=\frac{1}{2}\left[u_0-v_0+\frac{1}{u_0-v_0}\right],\quad \sinh a_1=\frac{(-1)^{n+1}}{2}\left[u_0-v_0-\frac{1}{u_0-v_0}\right].
\end{equation}
The fundamental {{difference}} invariants are
\begin{align*}
\kappa&=\rho_0\cdot u_1=(u_0-v_0)(u_1-v_1),\\
\mu&=\rho_0\cdot v_2=(v_2-v_0)/(u_0-v_0),\\
\nu&=\rho_0\cdot\zeta_0=\zeta_0-\ln(u_0-v_0).
\end{align*}
Therefore, in terms of these invariants, $\overline{\upL}$ amounts to
\begin{equation}\label{ex3lag}
L=\mu\mu_1+\ln\kappa+\nu_1-\nu_0,
\end{equation}
and 
\begin{equation}\label{TwistEgKinv}
\myAd(K_0) =\iup_0(\myAd(\rho_1))= \left(\begin{array}{ccc}
1 &0 & 0\\[12pt]
\frac{(-1)^n}{2}\,\kappa\mu& \frac12\left( \kappa +\kappa^{-1}\right)&\frac{(-1)^n}{2}\left( \kappa - \kappa^{-1}\right)\\[12pt]
\frac12\:\!\kappa\mu& \frac{(-1)^n}{2}\left( \kappa - \kappa^{-1}\right)&\frac12\left( \kappa +\kappa^{-1}\right)
\end{array}
\right).
\end{equation}

We now construct the differential--difference syzygies. Noting that
\[
\iup_0(u_j')=(u_0-v_0)^{(-1)^{j-1}}u_j',\qquad\iup_0(v_j')=(u_0-v_0)^{(-1)^{j-1}}v_j',\qquad\iup_0(\zeta_j')=\zeta_j',
\]
the fundamental differential--difference invariants are
\begin{equation}\label{TwistEeDiffInvs}
\sigma^u =\iup_0(u_0') =  \frac{u_0'}{u_0-v_0}\,,\qquad  \sigma^v =\iup_0(v_0') = \frac{u_0'}{u_0-v_0}\,,\qquad \sigma^\zeta=\zeta_0'.
\end{equation}
Consequently,
\begin{equation}\label{TwistEgdiffInvRR}
\iup_0(u_1')=\kappa\,\es\:\!\sigma^u,\quad\ \iup_0(v_1')=\kappa\,\es\:\!\sigma^v,\quad\ 
\iup_0(v_2')=\kappa^{-1}\kappa_1\,\es_2\sigma^v,
\quad\iup_0 (\zeta_1')=\es\:\!\sigma^\zeta.
\end{equation}
and thus the differential--difference syzygies needed to calculate the Euler--Lagrange equations are
\begin{equation}\label{TwistEqHeqn1}
\begin{array}{rcl}
\ds\frac{{\rm d}}{{\rm d}t} \left(\begin{array}{c} \kappa  \\ \mu\\ \nu \end{array}\right)
&=& \left(\begin{array}{ccc} \mathcal{H}_{11} & \mathcal{H}_{12}&0\\ \mathcal{H}_{21} & \mathcal{H}_{22}&0\\
-1& 1&1\end{array}\right)
\left(\begin{array}{c} \sigma^u\\ \sigma^v\\ \sigma^\zeta \end{array}\right)
\end{array}
\end{equation}
where
\begin{equation}\label{TwistEqHeqn2}
\mathcal{H}_{11}= \kappa(\es+\id),\quad \mathcal{H}_{12}=-\mathcal{H}_{11},\quad
\mathcal{H}_{21}=  -\mu\,\id,\quad
\mathcal{H}_{22}=\kappa^{-1}\kappa_1 \es_2+(\mu-\!1)\:\!\id.
\end{equation}
Therefore
\begin{align}\label{ex3var}
\frac{\upd L}{\upd t}=&\{\mathcal{H}_{11}^*\euka(L)+\mathcal{H}_{21}^*\eumu(L)-\eunu(L)\}\,\sigma^u+\{\mathcal{H}_{12}^*\euka(L)+\mathcal{H}_{22}^*\eumu(L)+\eunu(L)\}\,\sigma^v\nonumber\\
&+\eunu(L)\,\sigma^\zeta+(\es-\id)(A_{\bkappa}+A_{\mathcal{H}}),
\end{align}
where
\[
A_{\mathcal{H}}=\left[\es_{-1}(\kappa\euka(L))\right]\!(\sigma^u-\sigma^v)+(\es+\id)\left\{\es_{-2}\!\left[\kappa^{-1}\kappa_1\eumu(L)\right]\sigma^v\right\}
\]
and, for the particular Lagrangian \eqref{ex3lag}, $A_{\bkappa}=\mu_{-1}\mu'+\nu\!\phantom{.}'$.

The invariantized Euler--Lagrange equations are obtained from the coefficients of $\sigma^u,\sigma^u$ and $\sigma^\zeta$ in \eqref{ex3var}, ignoring terms in $A_{\bkappa}$ and $A_{\mathcal{H}}$. By construction, the coefficient of $\sigma^\zeta$ gives $\eunu(L)=0$; the remaining Euler--Lagrange equations simplify to
\begin{align}
0&=(\es_{-1}+\id)\{\kappa\euka(L)\}-\mu\eumu(L)=2-\mu(\mu_{-1}+\mu_1),\label{ex3eu}\\
0&=\es_{-2}\{\kappa^{-1}\kappa_1\eumu(L)\}-\eumu(L)=\left((\kappa_{-2})^{-1}\kappa_{-1}\es_{-2}-\id\right)(\mu_{-1}+\mu_1).\label{ex3ev}
\end{align}
The general (real-valued) solution of \eqref{ex3eu} is
\begin{equation}\label{ex3musol}
\mu=k_2^{(-1)^n}\left(1-(k_1)^2\right)^{-\lfloor n/2\rfloor}\left(1+k_1^{(-1)^{n+1}}\right)^n,\qquad k_1\neq\pm1,\ k_2\neq 0.
\end{equation}
To complete the solution, the following identities are useful:
\[
\mu_2=\frac{1+k_1^{(-1)^{n+1}}}{1+k_1^{(-1)^n}}\,\mu,\qquad \mu\mu_1=1+k_1(-1)^n.
\]
The first of these enables \eqref{ex3ev} to be solved:
\begin{equation}\label{ex3kapsol}
\kappa=c\left(1-(k_1)^2\right)^{-1} \left(1+k_1^{(-1)^{n+1}}\right),		
\end{equation}
where the (non-zero) constant $c$ will be determined later.

The conservation laws come from $A_{\mathcal{H}}$ which, for the Lagrangian \eqref{ex3lag}, amounts to 
\begin{equation}\label{TwistAHterms}
A_{\mathcal{H}}=\sigma^u-\sigma^v+(\es+\id)\left\{(\kappa_{-2})^{-1}\kappa_{-1}(\mu_{-3}+\mu_{-1})\,\sigma^v\right\}.
\end{equation}
Note: by construction, $\zeta$ and its invariantization do not contribute to the conservation laws; $\nu\!\phantom{.}'$ appears in $A_{\bkappa}$, but not in $A_{\mathcal{H}}$. 

One can complete the solution of the problem using \eqref{TwistAHterms}, but it is better to use the Euler--Lagrange equations \eqref{ex3eu} and \eqref{ex3ev} to simplify $A_{\mathcal{H}}$ first; this gives equivalent conservation laws with
\[
A_{\mathcal{H}}=\sigma^u-\sigma^v+(\es+\id)\left(2\mu^{-1}\sigma^v\right).
\]

To calculate the replacements for $\sigma^u$ and $\sigma^v$, we need the matrix of characteristics, restricted to the original dependent variables:
\begin{equation}\label{OmegTwEg}
\Phi(u_0,v_0)=\bordermatrix{ & a_1 & a_2 & a_3\cr
	u_0 & (-1)^n u_0 & 1 & (-1)^n\cr
	v_0 & (-1)^n v_0& 1& (-1)^n\cr}.
\end{equation}
The invariantization of this matrix gives the replacements
\begin{equation}\label{omegaReplacTwEg1}
\sigma^u \mapsto \left((-1)^n\ 1\ (-1)^n\right)\myAd(\rho_0),\qquad 
\sigma^v \mapsto \left(0\ 1\ (-1)^n\right)\myAd(\rho_0).
\end{equation}
The replacement for $\es\sigma^v$ is
\begin{equation}\label{omegaReplacTwEg3}
\es\sigma^v \mapsto \left(0\ 1\ (-1)^{n+1}\right)\myAd(\rho_1)=\left(0\ 1\ (-1)^{n+1}\right)\myAd(K_0)\myAd(\rho_0).
\end{equation}
Collecting terms, the conservation laws are 
\begin{equation}\label{TwistEgCLs}
(c_1\ c_2\ c_3)=\left((-1)^n\quad\ \frac{2}{\mu}+\!\frac{2}{\kappa\mu_1}\quad\ \frac{2(-1)^n}{\mu}+\!\frac{2(-1)^{n+1}}{\kappa\mu_1}
\right)\myAd(\rho_0).
\end{equation}
Bearing in mind that $\kappa=(u_0-v_0)(u_1-v_1)$, \eqref{TwistEgCLs} amounts to
\begin{align}
c_1&=(-1)^n\left[1-(\es-\id)\left\{\frac{2v_0}{\mu(u_0-v_0)}\right\}\right],\label{ex3c1}\\[5pt]
c_2&=(\es+\id)\left\{\frac{2}{\mu(u_0-v_0)}\right\},\quad
c_3=(\es+\id)\left\{\frac{2(-1)^n}{\mu(u_0-v_0)}\right\}.\label{ex3c23}
\end{align}
The general solution of \eqref{ex3c23} is
\begin{equation}\label{ex3uvsol}
u_0-v_0=\frac{4}{\mu(c_2+c_3(-1)^n)}\,,\qquad c_2^2-c_3^2\neq 0,
\end{equation}
where $\mu$ is given by \eqref{ex3musol}. Therefore, the value of the undetermined constant in \eqref{ex3kapsol} is $c=16/(c_2^2-c_3^2)$ and the general solution of \eqref{ex3c1} is
\begin{equation}\label{ex3vsol}
v_0=\frac{2n+c_1(-1)^n+k_3}{\mu(c_2+c_3(-1)^n)}\,,
\end{equation}
where $k_3$ is an arbitrary constant. This yields $u_0$ from \eqref{ex3uvsol}, completing the solution of the Euler--Lagrange equations in the original variables.


\section{Application to the study of Euler's elastica}\label{Elastica}
As a final example, we study a discrete variational problem analogous to that of the smooth Euler elastica, 
\begin{equation}
\label{lagran}
\mathcal{L}=\int \kappa^2 {\rm d}s,\qquad \kappa=\frac{u_{xx}}{\left(1+u_x^2\right)^{3/2}},\qquad {\rm d}s=\sqrt{1+u_x^2}\,{\rm d}x,
\end{equation}
where $\kappa$ is the Euclidean curvature and $s$ is the Euclidean arc length. Converting to derivatives with respect to arc length $s$, the Euler--Lagrange equation is
$$\kappa_{ss}+\frac12\kappa^3=0,$$
for which a first integral was found by Euler in his masterpiece \citet{Euler} (see also \citet{levien} for a mathematical history of the problem).

{{The  aim is to design  the discrete  Lagrangian in such a way that not only the discrete Euler--Lagrange equations, but also all the discrete conservation laws  become, in an appropriate continuum limit, the smooth Euler--Lagrange equations and conservation laws of a variational problem.  Our method involves taking a difference frame for $SE(2)$ which has for its continuum limit the smooth $SE(2)$ frame, see 
\citet{Mansfield:2010aa,GonMan2}; by matching this heart of the two calculations, one for the smooth and one for the difference variational problem, we match not only the Euler--Lagrange equations but {all three} conservation laws, as all the relevant formulae align allowing convergence to be proven readily.}}

 {{By contrast, Ge's famous no-go theorem (see \citet{GeMars})  states that a symplectic integrator, possibly after reduction so that only the conservation of energy remains, cannot exactly preserve the smooth energy without computing the exact solution. }}
 Conservation of energy in the smooth cases arises when a Lagrangian is invariant
under translations in the independent variable. When computing a difference analogue, the independent variable must appear as a discrete dependent variable and the difference Lagrangian must be invariant under translation in this, so that conservation of energy in the smooth case becomes
a conservation of a linear momentum in the difference analogue. For our example here, it is translation in $x$ in the smooth case which is incorporated.

{{That our method of works in general is an open conjecture. To evidence this conjecture, we calculate all the relevant quantities in detail.}}

The Euclidean group of rotations and translations in the plane acts on curves $(x, u(x))$ as 
\[
\left( 
\begin{array}{c}
x\\u \end{array}
\right)
\mapsto
R_{\theta}\left( 
\begin{array}{c}
x\\u \end{array}
\right) +\left( 
\begin{array}{c}
a\\b \end{array}
\right)=\left( 
\begin{array}{c}
\tilde{x}\\ \tilde{u} \end{array}
\right),\qquad R_\theta=\left( 
\begin{array}{rr}
\cos\theta&-\sin\theta\\ \sin\theta&\cos\theta \end{array}
\right).
\]
Choosing the normalization equations to be
\begin{equation}\label{smoothNeqs}
\tilde{x}=0,\qquad \tilde{u}=0,\qquad \widetilde{u_x}=0,
\end{equation}
 we obtain a smooth frame, denoted by $\widehat{\rho}$, namely
\[
\widehat{\rho}=\left(\begin{array}{cc} R_{\theta} & -R_{\theta} \left(\begin{array}{c} x\\u\end{array}\right)\\0&1\end{array}\right),
\]
where $R_{\theta}$ is the rotation matrix with $\sin\theta = -u_x/\sqrt{1+u_x^2}$, $\cos\theta = 1/\sqrt{1+u_x^2}$. This frame  satisfies
\begin{equation}
\label{maurer1}
\widehat{\rho}_s \widehat{\rho}^{\:-1}= \frac{1}{\sqrt{1+u_x^2}}\, \widehat{\rho}_x \widehat{\rho}^{\:-1}=\left(\begin{array}{ccc} 0 & \kappa & -1 \\ -\kappa & 0 &0 \\ 0 & 0 & 0\end{array} \right).
\end{equation}
With this frame, the conservation laws for the Lagrangian (\ref{lagran}) are, in terms of the 
moving frame $\widehat{\rho}$  
\citep[see][]{GonMan2,Mansfield:2010aa}:
$$(-\kappa^2\ -\!2\kappa_s\,\ 2\kappa)\underbrace{
\left(\begin{array}{ccc} x_s & u_s & xu_s-ux_s\\ -u_s & x_s & xx_s+uu_s\\ 0&0&1\end{array}\right)}_{\myAd(\widehat{\rho}\,)}
=( c_1\ c_2\ c_3).$$
Using the identity $x_s^2+u_s^2=1$, this amounts to
\begin{equation}\label{EEconlaws}(-\kappa^2\ -\!2\kappa_s\,\ 2\kappa)=( c_1\ c_2\ c_3)\left(\begin{array}{ccc} x_s & -u_s & u\\ u_s & x_s & -x\\ 0&0&1\end{array}\right).\end{equation}
The same identity gives a first integral for the Euler--Lagrange equation,
\begin{equation}\label{EEfirstint} \kappa^4+4\kappa_s^2=c_1^2+c_2^2.\end{equation}
Eliminating $x_s$ from the first two columns of \eqref{EEconlaws} gives
 \begin{equation}\label{EExseq} u_s=\frac{1}{c_1^2+c_2^2}\, \left( 2 c_1 \kappa_s-c_2\kappa^2\right).\end{equation}
By solving (\ref{EEfirstint}), (\ref{EExseq}) and the third column of (\ref{EEconlaws}) (to determine $x$), we obtain the smooth solution in Figure \ref{EulerSols}, {once the constants of integration $c_1$ and $c_2$ are determined}.

The idea is to take a difference frame with matching normalization equations,  and to take the discrete analogues of curvature and arc length to be those playing the same role when compared to the smooth
Maurer--Cartan invariants given in Equation (\ref{maurer1}). We now explain these remarks.
Consider the action of $\es E(2)$ in the plane where the points $\mbu_j$  have coordinates $(x_j,u_j)$,
and take the frame
\[
\rho_{0}=
\left(
\begin{array}{cc}
R_{\theta_{0}} & -R_{\theta_{0}}\left(\begin{array}{c} x_0\\u_0\end{array}\right) \\
0 & 1
\end{array}
\right),
\]
(using the standard representation) such that  the normalization equations are $$\rho_0\cdot \mbu_0=(0,0), \qquad \rho_0\cdot (\mbu_1-\mbu_0)=(*,0).$$ In other words, $R_{\theta_{0}}$ is the rotation matrix that sends $ \mbu_{1}- \mbu_0$ to a row vector with a zero second component, so that
$\sin \theta_0=-(u_{1}-u_0)/\ell$ and $\cos\theta_0=(x_{1}-x_0)/\ell$, where $\ell=| \mbu_{1}- \mbu_0|$. These discrete normalization equations match, in some sense,  the 
normalization equations (\ref{smoothNeqs}) for the smooth frame. Then 
\[
K_0=\rho_{1}\rho_0^{-1}=\left(\begin{array}{cc} R_{\hth} & -R_{\theta_{1}}\left(\begin{array}{c} x_1-x_0\\u_1-u_0\end{array}\right)\\0&1\end{array}\right)
=\left(\begin{array}{cc} R_{\hth} & -R_{\hth}\left(\begin{array}{c} \ell\\0\end{array}\right)\\0&1\end{array}\right),
\]
where $\hth=\theta_{1}-\theta_{0}$. Therefore the generating invariants are $\hth$ and $\ell$.

In order to see the discrete analogues of curvature and arc length, we consider $\widehat{\rho}_x\widehat{\rho}^{\:-1}$ to be approximated by 
\[\left(\widehat{\rho}(x+h_x)-\widehat{\rho}(x)\right)\widehat{\rho}(x)^{\:-1}/h_x=\left(\widehat{\rho}(x+h_x)\widehat{\rho}(x)^{\:-1}-\mbox{Id}\right)/h_x,\] where $\text{Id}$ is the identity matrix, and
$\widehat{\rho}(x+h_x)\widehat{\rho}(x)^{\:-1}$ to be approximated by $K_0$ when $x=x_0$ and $h_x=x_{1}-x_0$.

Observing that the component of the first row and second column of the matrix $K_{0}-\mbox{Id}$ is $-\sin\hth$ and that, to first order in $\hth$, the component of the first row and third column of the matrix 
$K_{0}-\mbox{Id}$ is $-\ell$,
we take the discrete analogue of ${\rm d}s$ to be $\ell$ and the discrete analogue of $\kappa$ to be
$$\overline{\kappa}=-\ell^{-1}\sin\hth.$$
Hence, we consider the variational problem
\[
\mathcal{L}=\sum \ell^{-1}\sin^2\hth,
\]
with $\ell$ and $\hth$ as the fundamental invariants. The differential--difference syzygies are
\begin{align*}
\ell\!\!\:\phantom{|}'&=\cos\hth\,\es\sigma^x+\sin\hth\,\es\sigma^u-\sigma^x,\\
\hth'&=(\es-\id)\left(\ell^{-1}[\sin\hth\,\es\sigma^x-\cos\hth\,\es\sigma^u+\sigma^u]\right),
\end{align*}
where $\sigma^x=\iup_0(x_0')$ and $\sigma^u=\iup_0(u_0')$.
Applying the theory developed in this paper, the invariantized Euler--Lagrange equations are
 \[
 \begin{array}{l}
\{\es_{-1}(\cos\hth)\,\es_{-1}-\id\}\eul (L) + \left\{\es_{-1}\!\left(\ell^{-1}\sin\hth\right)(\es_{-2}\!-\!\es_{-1})\right\}\eut (L)=0,\\[5pt]
\{\es_{-1}(\sin{\hth})\,\es_{-1}\}\eul (L) +\left\{\ell^{-1}(\es_{-1}\!-\!\id)-\es_{-1}\!\left(\ell^{-1}\cos{\hth}\right)(\es_{-2}\!-\!\es_{-1})\right\}\eut (L)=0,
\end{array}
\]
where
\[
\eut (L)=\frac{\partial L}{\partial \hth}=\ell^{-1}\sin(2\hth), \qquad \eul (L)=\frac{\partial L}{\partial \ell}=-\ell^{-2}\sin^2\hth.
\]
These equations are then solved for $\ell$ and $\hth$.
We note that (the shifts of) these Euler--Lagrange equations can be written in the form,
\[ \left(\begin{array}{cc} \cos\hth & -\sin \hth\\ \sin \hth & \cos\hth\end{array}\right) \left(\begin{array}{c} \ell^{-1}\left(\es_{-1} -\id\right) \eut(L)\\ \eul\end{array}\right) = \es \left(\begin{array}{c}  \ell^{-1}\left(\es_{-1} -\id\right) \eut(L)\\ \eul\end{array}\right).\]
The boundary terms can be written in the form
\[
A_{\mathcal{H}}= \mathcal{C}_x^0 \,\iup_{0}(x_{0}')+\mathcal{C}_u^0 \,\iup_{0}(u_{0}')+\mathcal{C}^1_x\, \es(\iup_{0}(x_{0}')) + \mathcal{C}_u^1\, \es(\iup_{0}(u_{0}')),
\]
where
\[
 \begin{array}{rcl}
\mathcal{C}_x^0&=& \es_{-1} \left\{ \co\, \eul(L)-\,\ell^{-1}\si \eut(L) +\ell^{-1}\si\,\es_{-1}\left(\eut(L) \right) \right\},\\[5pt] 
\mathcal{C}_u^0&=&\es_{-1} \left\{ \si \eul(L)+ \left(\es\left(\ell^{-1}\right)+\ell^{-1}\co-\,\ell^{-1}\co\,\es_{-1}\right)\eut(L) \right\},\\[5pt]
\mathcal{C}_x^1&=&\ell^{-1}\si\, \es_{-1}\left\{\eut(L) \right\},\\[5pt]
\mathcal{C}_u^1&=&-\,\ell^{-1}\co\, \es_{-1}\left\{\eut(L) \right\}.
\end{array}
\]
The infinitesimal vector fields are
$$ \mathbf{v}_a=\partial_{x_0},\qquad \mathbf{v}_b=\partial_{u_0},\qquad \mathbf{v}_{\theta}= -u_0\partial_{x_0}+x_0\partial_{u_0},$$
so that
$$\Phi(\mbu_0)=\begin{pmatrix}1& 0& u_0\\ 0 & 1&-x_0\end{pmatrix},\qquad \Phi(I)=\begin{pmatrix}1& 0& 0\\ 0 & 1&0\end{pmatrix},$$
and
$$\myAd\big(g(\theta,a,b)\big)=\begin{pmatrix}\cos\theta & -\sin\theta  & b\\ \sin\theta & \cos\theta&-a\\ 0&0&1\end{pmatrix}.
$$

Applying the replacement (\ref{InvAdrepEqn}) and simplifying and collecting terms, the conservation laws can be written in terms of the row vector of invariants as follows
\begin{equation}\label{EulerElasticaCLs}
(V_1\ V_2\ V_3)\left(
\begin{array}{cc}
R_{\theta_{0}} & J R_{\theta_0}\mbu_0 \\
0 & 1
\end{array}
\right)=(c_1\ c_2\ c_3),\qquad J=\left(\begin{array}{cc} 0&-1\\ 1&0\end{array}\right),
\end{equation}
where
\[
\begin{array}{l}
V_1=\es_{-1}(\cos\hth\eul (L)) + \left\{\es_{-1}\left(\ell^{-1}\sin\hth\right)(\es_{-2}-\es_{-1})\right\}\eut (L),\\[5pt]
V_2=\es_{-1}(\sin\hth\eul (L)) -\left\{
\es_{-1}\left(\ell^{-1}\co \right)(\es_{-2} - \es_{-1})\right\}\eut (L),\\[5pt] 
V_3=-\es_{-1}\left(\eut(L) \right).
 \end{array} 
\]

Using Maple, we solve the discrete Euler--Lagrange equations for the invariants as an initial data problem. 
Note that (\ref{EulerElasticaCLs}) implies that $$\left(V_1\right)^2 +\left(V_2\right)^2=\left(c_1\right)^2 +\left(c_2\right)^2,$$ which gives a first integral of the discrete Euler--Lagrange equations.
Further, there is a linear relation for $x$ and $u$, in terms of the invariants; using the methods of \S\ref{methodforEulerElas},
the solution in terms of the original variables can be obtained in a straightforward manner. The initial data give the values of the constants $c_1$, $c_2$.
We have used these constants and the initial values $(x_0,u_0)=(0,1)$ to obtain the initial data for the smooth solution.
The discrete equations require one more initial datum than the smooth, so that more than one discrete solution will have the same constants and starting point, and hence more than one discrete solution can approximate a given smooth one. In Figure \ref{EulerSols}, we compare two discrete solutions with differing initial step sizes, both
approximating the single smooth solution. Magnifications of these solutions, {verification that conserved quantities are indeed conserved, and relative errors of each discrete solution to the smooth solution} are given Figures \ref{FigAmps} and \ref{FigConsRelErr}.

\begin{figure}[t]
	\begin{center}
		\begin{tabular}{c}
			\includegraphics[scale=0.5]{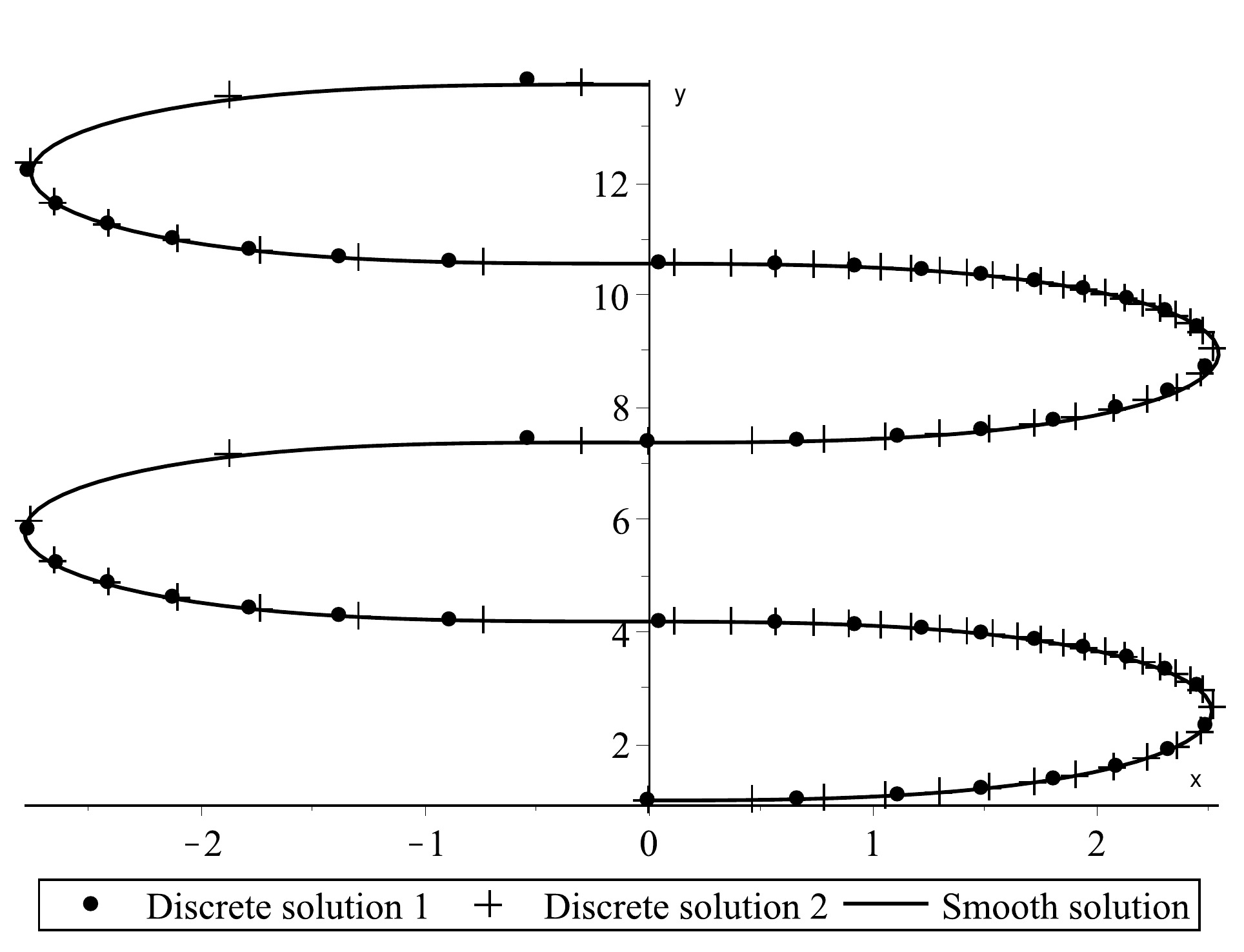}
		\end{tabular}
	\end{center}
	\caption{\label{EulerSols} A plot of  {{an extract}} of 847 {{points}} of the discrete solution for certain initial data and {{an extract}} of 507 {{points}} of the discrete solution for a variation of the previous initial data. This is compared with an accurate numerical solution of  {{the third column of \eqref{EEconlaws}, \eqref{EEfirstint} and \eqref{EExseq}}},  using a Fehlberg fourth-fifth order Runge-Kutta method with degree four interpolant, with uniform step $0.1$. {{The conservation laws are used in the solution in order to match the initial data.}}}\label{FigElastica1}
\end{figure}

\begin{figure}[!h]
	\begin{tabular}{cc}
		\hspace*{-12pt}\includegraphics[scale=0.38]{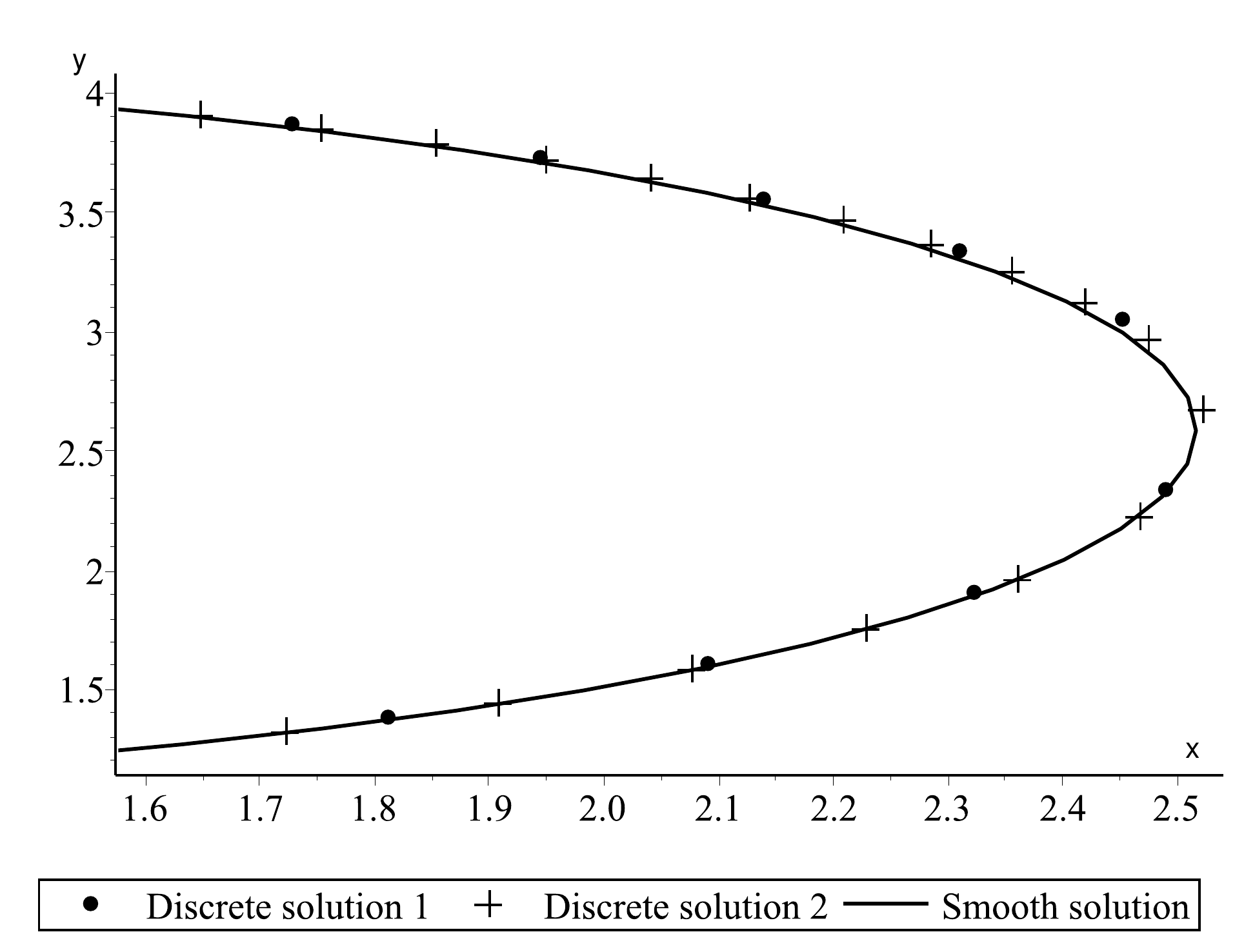} & \includegraphics[scale=0.38]{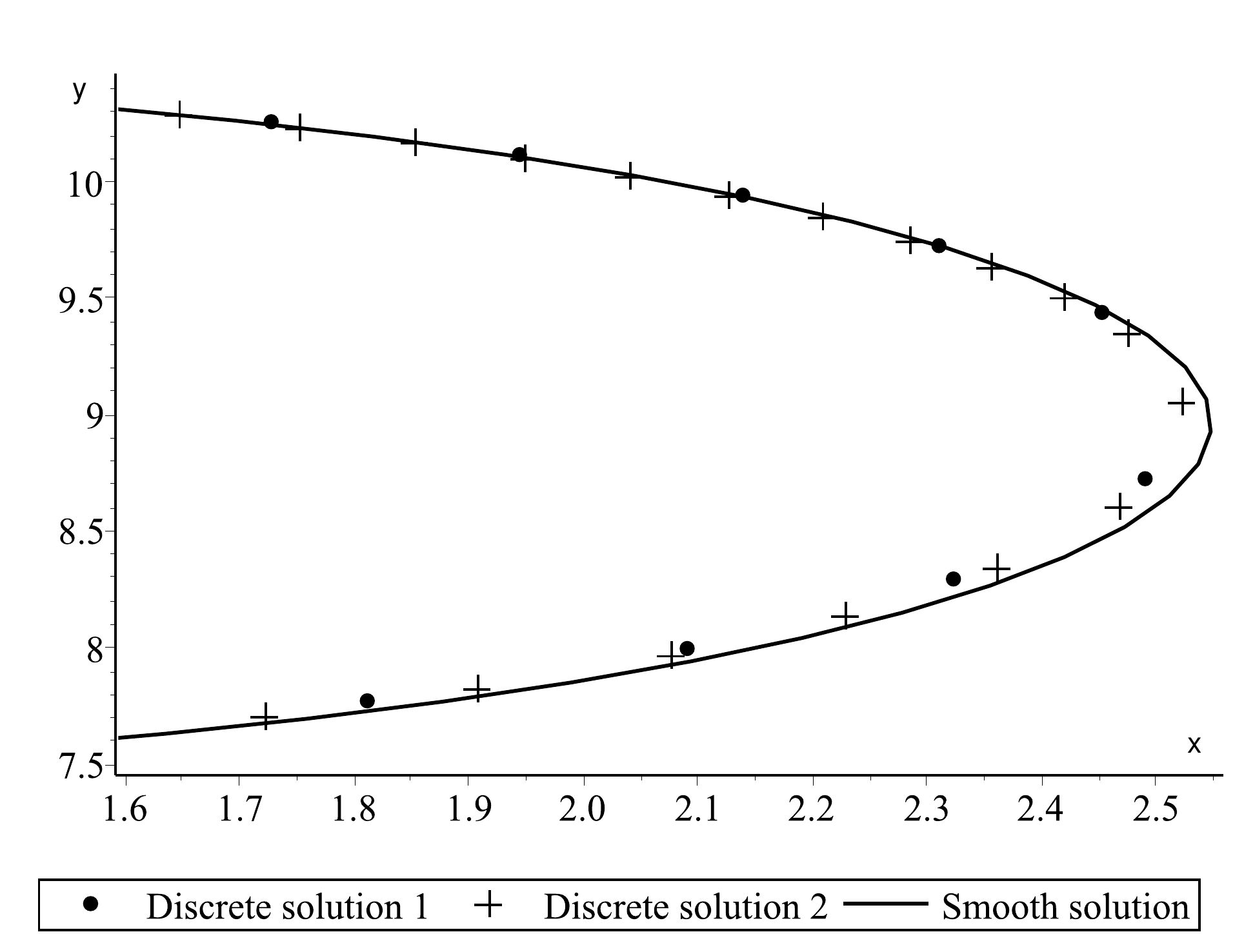}\\
		{ {\scriptsize  (a)}} & { {\scriptsize  (b)}} \\
	\end{tabular}
	\caption{\label{FigAmps}Plots (a) and (b) magnify two regions of Figure 2.}\label{FigElastica2}
\end{figure}

\begin{figure}[t]
	\begin{center}
		\begin{tabular}{cc}
			\includegraphics[scale=0.35]{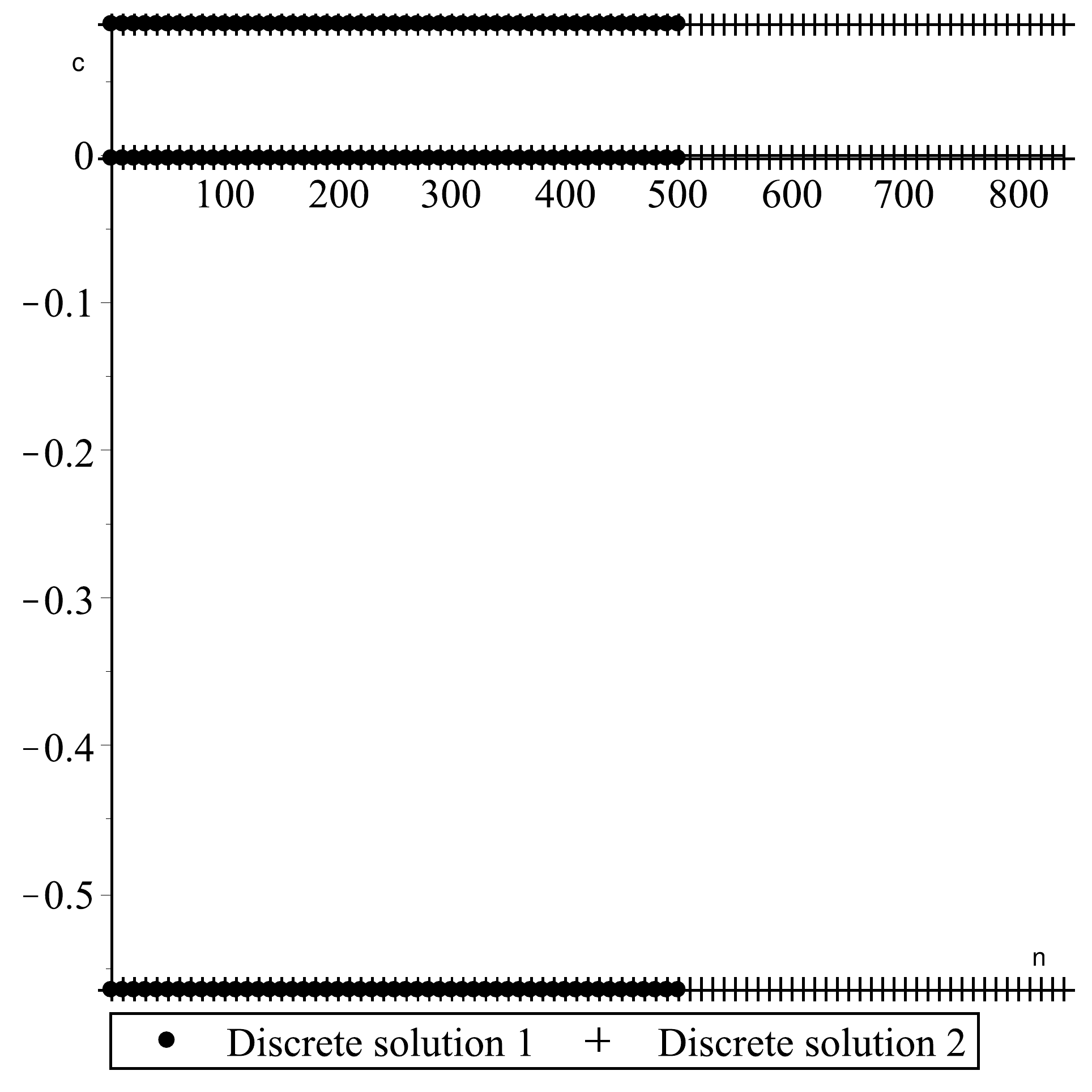} & \includegraphics[scale=0.35]{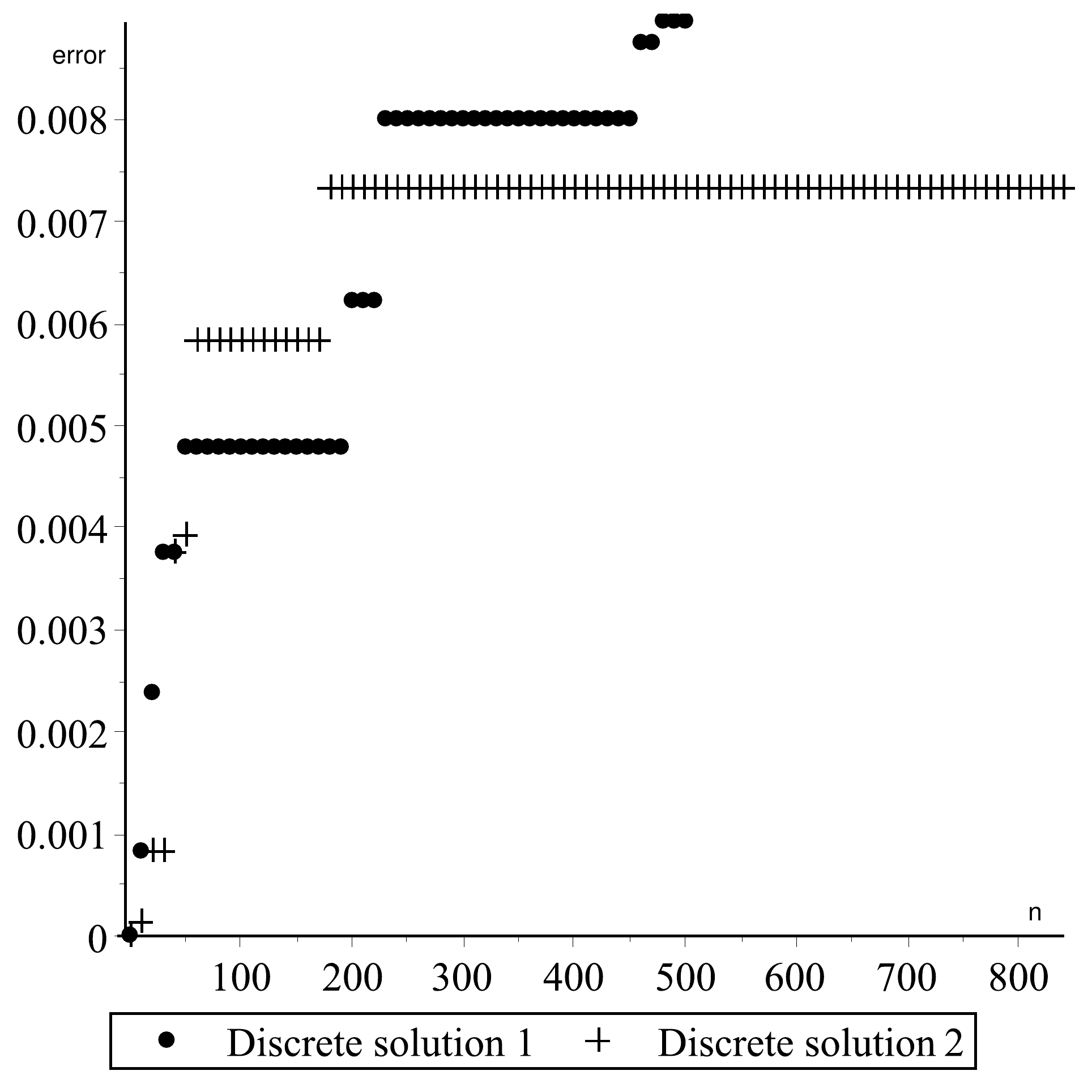}\\
			{ {\scriptsize  (a)}} & { {\scriptsize  (b)}} \\
		\end{tabular}
	\end{center}
	\caption{\label{FigConsRelErr} (a) {{Graph showing the three conserved quantities are indeed conserved (are constants). The values of the constants in the conservation laws obtained 
		in the discrete case were used to calculate the initial data in the integration of the smooth case.}} 
		(b) The norm relative error between {the  solution 
		of the smooth problem and each solution of the discrete problem}.}\label{FigElastica3}
\end{figure}

{{A routine albeit lengthy calculation shows that the Euler--Lagrange equations and the conservation laws converge to the smooth counterparts
in the continuum limit. }}
The relative success of this simple example shows that this approach to obtaining symmetry-preserving variational integrators via the difference moving frame merits further research. More sophisticated methods to derive discrete Lagrangians using interpolation are also being explored \citep{GMBMAN}.

\section{Conclusions}\label{conclusions}

In this paper, we have introduced difference moving frames and some applications. In particular, for a discrete Lagrangian with a Lie group of variational symmetries, a difference moving frame expresses the Euler--Lagrange equations in terms of the invariants and Noether's
conservation laws in terms of the frame and a vector of invariants. This makes explicit the equivariance of the conservation laws. The difference frame formulation can allow one to solve for the solutions in terms of the original variables by a divide-and-conquer approach: first the invariantized Euler--Lagrange equations are solved, then the conservation laws and frame are used to construct the complete solution. It is worth noting that one need not use the full Lie group to do this; a Lie subgroup may do the job more simply. For instance, using the translation subgroup in our running example would have been equally effective, with slightly simpler calculations. 

We have developed the Noether theory for difference frames to cover all variational symmetries, not just those that leave the given Lagrangian invariant. It also covers symmetries whose generators are $n$-dependent and Lagrangians that vary with $n$.

{{We have shown that, by matching the smooth and difference frames for the smooth and discretized problems respectively, it is possible to create symmetry-preserving variational integrators which can approximate the full set of conservations}}. Optimizing the use of the difference frame in these approximations is a subject for further research.

Part I of this paper has concentrated on relatively straightforward Lie group actions. In Part II, we turn to actions of the semisimple Lie group $\es L(2)$.

\section*{Acknowledgements}
The authors would like to thank the SMSAS at the University of Kent and the EPSRC (grant EP/M506540/1) for funding this research.

\bibliographystyle{agsm}

\clearpage
\appendix

{{
\section*{Appendix A.\  From structure constants to $\myAd(g)$}

There is an alternative way to construct the matrix $\myAd(g)$ from the basis
\[
\mbv_r=\xi^\alpha_r(\mbu)\,\partial_{u^\alpha},\qquad r=1,\dots,R,
\]
for the Lie algebra of infinitesimal generators. The Lie algebra is closed, giving rise to the commutator relations
\[
[\mbv_i,\mbv_j]\equiv \mbv_i\mbv_j-\mbv_j\mbv_i=c_{ij}^k\mbv_k,\qquad i,j=1,\dots,R,
\]
where $c_{ij}^k$ are the structure constants. Define the $R\times R$ matrices $C_j$ by
\[
(C_j)_i^k=c_{ij}^k.
\]

Every infinitesimal generator $\mbv=a^r\mbv_r$ can be exponentiated to produce a one-parameter local Lie group of transformations $\exp(\varepsilon\mbv)$. These act on an arbitrary smooth function $F(\mbu)$ as follows:
\[
\exp(\varepsilon\mbv)F(\mbu)=F(\widehat\mbu(\varepsilon)),\qquad\text{where}\quad \widehat\mbu(\varepsilon)=\exp(\varepsilon\mbv)\mbu\equiv\sum_{m=0}^\infty\frac{\varepsilon^m}{m!}\,\mbv^m(\mbu).
\]
One can obtain $\widehat{\mbu}(\varepsilon)$ by solving the initial value problem
\[
\frac{\upd \widehat{u}^{\,\alpha}(\varepsilon)}{\upd \varepsilon}=a^r\xi_r^\alpha(\widehat{\mbu}(\varepsilon)),\quad\alpha=1,\dots,q;\qquad \widehat{\mbu}\big|_{\varepsilon=0}=\mbu.
\]

\begin{theorem}\label{adthm}
	Suppose that $g\cdot \mbu=\exp(a^r\mbv_r)\mbu$. Then $\myAd(g)=\exp(a^rC_r)$.
\end{theorem}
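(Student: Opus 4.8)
The plan is to recognise the statement as the classical identity $\mathrm{Ad}(\exp X)=\exp(\mathrm{ad}_X)$, realised in the matrix conventions of \eqref{addef}--\eqref{addefut}, and to prove it by reducing everything to a single derivative at the identity. First I would introduce the one-parameter subgroup $g_\varepsilon=\exp(\varepsilon\,a^r\mbv_r)$, so that $g_1=g$ and $\widehat{\mbu}(\varepsilon)=g_\varepsilon\cdot\mbu$ is the flow solving the initial value problem stated above the theorem; in particular $\frac{\mathrm d}{\mathrm d\varepsilon}\big|_{\varepsilon=0}\widehat u^{\,\alpha}=a^s\xi^\alpha_s(\mbu)$. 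Since $\varepsilon\mapsto g_\varepsilon$ is a one-parameter subgroup and $\myAd$ respects products on this abelian family (as follows from \eqref{addef}, equivalently \eqref{infid}, by composing two group elements and applying the chain rule), the curve $\varepsilon\mapsto\myAd(g_\varepsilon)$ is a one-parameter subgroup of $GL(R,\mathbb{R})$ and is therefore automatically $\myAd(g_\varepsilon)=\exp(\varepsilon A)$ with $A=\frac{\mathrm d}{\mathrm d\varepsilon}\big|_{\varepsilon=0}\myAd(g_\varepsilon)$. The whole problem thus collapses to computing $A$ and identifying it with $a^rC_r$; evaluating at $\varepsilon=1$ then finishes the proof.

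The heart of the argument is to differentiate the defining identity \eqref{addefut}, written for $g=g_\varepsilon$ as $\mbv_r(\widehat u^{\,\alpha}(\varepsilon))=\xi^\alpha_k(\widehat{\mbu}(\varepsilon))\,\myAd(g_\varepsilon)^k_r$, at $\varepsilon=0$. Because $\widehat u^{\,\alpha}$ is a function of $\mbu$, the derivation $\mbv_r$ (acting in $\mbu$) commutes with $\frac{\mathrm d}{\mathrm d\varepsilon}$, so the left-hand side differentiates to $\mbv_r(a^s\xi^\alpha_s)=a^s\mbv_r(\xi^\alpha_s)$. On the right-hand side, using $\myAd(g_0)=\mathrm{Id}$, one term comes from the $\varepsilon$-dependence of $\xi^\alpha_k(\widehat{\mbu})$ and gives $a^s\mbv_s(\xi^\alpha_r)$, while the other is $\xi^\alpha_k A^k_r$. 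Equating the two sides and recognising the combination $\mbv_r(\xi^\alpha_s)-\mbv_s(\xi^\alpha_r)=[\mbv_r,\mbv_s](u^\alpha)=c_{rs}^k\,\xi^\alpha_k$ yields $\xi^\alpha_k A^k_r=a^s c_{rs}^k\,\xi^\alpha_k$ for every $\alpha$.

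To extract $A$ I would invoke the linear independence of the generators $\mbv_k=\xi^\alpha_k\partial_{u^\alpha}$, which holds on the domain where the action is free so that $\mathcal X$ is genuinely $R$-dimensional: the displayed relation says $\sum_k\big(A^k_r-a^sc_{rs}^k\big)\mbv_k=0$, forcing $A^k_r=a^sc_{rs}^k$. Comparing with $(C_j)_i^k=c_{ij}^k$, this is precisely $A=a^rC_r$, and substituting $\varepsilon=1$ into $\myAd(g_\varepsilon)=\exp(\varepsilon A)$ gives $\myAd(g)=\exp(a^rC_r)$. The computation itself is short; the points that require care are the interchange of $\mbv_r$ with $\frac{\mathrm d}{\mathrm d\varepsilon}$, the cancellation of the $\xi^\alpha_k$ via linear independence, and the index/ordering bookkeeping — in particular writing the bracket as $[\mbv_r,\mbv_s]$ so that the structure constants appear as $c_{rs}^k$ and match $(C_s)^k_r$, leaving no stray sign. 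The only ingredient beyond the direct calculation is the multiplicativity of $\myAd$ on $\{g_\varepsilon\}$ that makes $\varepsilon\mapsto\myAd(g_\varepsilon)$ a one-parameter subgroup, and I expect that to be the main thing to pin down cleanly.
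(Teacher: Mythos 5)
Your argument is correct, and it reaches the result by a recognizably different route from the paper's. The paper works along the whole curve: writing $\mbv_i=A_i^l(\varepsilon)\widehat{\mbv}_l(\varepsilon)$ and conjugating by the flow, it shows that $\myAd(g(\varepsilon))$ satisfies the linear matrix ODE $\tfrac{\upd}{\upd\varepsilon}\myAd(g(\varepsilon))=a^j\myAd(g(\varepsilon))C_j$ with $\myAd(g(0))=\text{Id}$, and concludes by uniqueness, since $\exp(\varepsilon a^jC_j)$ solves the same initial value problem. You instead invoke the multiplicativity of $\myAd$ to see that $\varepsilon\mapsto\myAd(g_\varepsilon)$ is a one-parameter matrix subgroup, hence equal to $\exp(\varepsilon A)$ with $A$ its derivative at the identity, so the entire computation collapses to one differentiation of \eqref{addefut} at $\varepsilon=0$. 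That computation is sound: the interchange of $\mbv_r$ with $\tfrac{\upd}{\upd\varepsilon}$ is just equality of mixed partials, and the resulting relation $\xi^\alpha_kA^k_r=a^s\bigl(\mbv_r(\xi^\alpha_s)-\mbv_s(\xi^\alpha_r)\bigr)=a^sc_{rs}^k\xi^\alpha_k$ gives $A=a^sC_s$ in the paper's convention $(C_s)_r^k=c_{rs}^k$ once the $\xi^\alpha_k$ are cancelled; note that the paper's proof silently performs the analogous cancellation of $\mbv_lF(\mbu)$, so both arguments rest on the same linear-independence (faithfulness) hypothesis, which you at least state explicitly. As for what each approach buys: the paper's ODE argument avoids having to establish that $\myAd$ is a homomorphism, at the cost of the conjugation identity $\exp(-\varepsilon\mbv)\,\mbv_i\exp(\varepsilon\mbv)F=A_i^l(\varepsilon)\mbv_lF$ and its $\varepsilon$-derivative; your argument front-loads the homomorphism property --- which is a one-line consequence of \eqref{addef} (or \eqref{infid}) and the chain rule, and which the paper needs anyway to prove its Corollary --- and then requires only a derivative at the identity. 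The one point you flagged as needing to be pinned down, the multiplicativity of $\myAd$ on $\{g_\varepsilon\}$, is indeed the only missing lemma, and it is routine.
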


\begin{proof}
Let $\mbv=a^r\mbv_r$ and consider the action of the one parameter local Lie group $\exp(\varepsilon\mbv)$. Define
\[
g(\varepsilon)\cdot\mbu=\widehat\mbu(\varepsilon)=\exp(\varepsilon\mbv)\mbu
\]
and denote the components of $\myAd(g(\varepsilon))$ by $A_i^l(\varepsilon)$.
The identity \eqref{addef} yields $$\mbv_i=A_i^l(\varepsilon)\widehat{\mbv}_l(\varepsilon),$$ where $\widehat{\mbv}_l(\varepsilon)$ is $\mbv_l$ with $\mbu$ replaced by $\widehat{\mbu}(\varepsilon)$. If $F(\mbu)$ an arbitrary smooth function, 
\[
\exp(-\varepsilon\mbv)\,\mbv_i\exp(\varepsilon\mbv)F(\mbu)=\exp(-\varepsilon\mbv)\left\{A_i^l(\varepsilon)\widehat{\mbv}_l(\varepsilon)F(\widehat\mbu(\varepsilon))\right\}=A_i^l(\varepsilon)\mbv_lF(\mbu).
\]
Consequently,
\[
\frac{\upd}{\upd\varepsilon}\left\{A_i^l(\varepsilon)\right\}\mbv_lF(\mbu)
=\exp(-\varepsilon\mbv)\,[\mbv_i,\mbv]\exp(\varepsilon\mbv)F(\mbu)
=a^jc_{ij}^kA_k^l(\varepsilon)\mbv_lF(\mbu),
\]
and so $\myAd(g(\varepsilon))$ satisfies the initial value problem
\[
\frac{\upd}{\upd\varepsilon}\,\myAd(g(\varepsilon))=a^j\,\myAd(g(\varepsilon))C_j,\qquad \myAd(g(0))=\text{Id}.
\]
The $R\times R$ matrix $\exp(\varepsilon a^jC_j)$ satisfies the same initial value problem, so the standard uniqueness theorem for ODEs proves that
\[
\myAd(g(\varepsilon))=\exp(\varepsilon a^jC_j).
\]
Set $\varepsilon=1$ to conclude the result.
\end{proof}

\begin{corollary}\label{adcor}
	Suppose that $$g\cdot \mbu=\exp(a_1^{r_1}\mbv_{r_1})\exp(a_2^{r_2}\mbv_{r_2})\cdots\exp(a_M^{r_M}\mbv_{r_M})\mbu.$$ Then $\myAd(g)=\exp(a_1^{r_1}C_{r_1})\exp(a_2^{r_2}C_{r_2})\cdots\exp(a_M^{r_M}C_{r_M})$.
\end{corollary}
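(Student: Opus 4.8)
The plan is to reduce the corollary to Theorem \ref{adthm} by exploiting the fact that $g\mapsto\myAd(g)$ is a homomorphism. Concretely, the key intermediate claim I would isolate and prove first is that, for all $g_1,g_2\in G$,
\[
\myAd(g_1 g_2)=\myAd(g_1)\,\myAd(g_2).
\]
This is just the statement that the intrinsic adjoint map $Ad:G\to GL(\mathfrak{g})$ is a Lie group homomorphism, transcribed into the fixed basis $\mbv_1,\dots,\mbv_R$ of $\mathcal{X}$; what makes the transcription legitimate is that $\myAd(g)$ depends only on $g$ and not on the base point $\mbu$, since the structure constants $c_{ij}^k$ are constant on $\mathfrak{g}$.

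To keep the argument self-contained I would verify the homomorphism property directly from the defining identity \eqref{addef}, writing $\widetilde{\mbv}_l=\mbv_l\big|_{g\cdot\mbu}$ so that \eqref{addef} reads $\mbv_i=\widetilde{\mbv}_l\,\myAd(g)^{\,l}_{\,i}$ componentwise. First apply this with $g_2$ to obtain $\mbv_i=\mbv_l\big|_{g_2\cdot\mbu}\,\myAd(g_2)^{\,l}_{\,i}$; then apply \eqref{addef} for $g_1$ at the base point $g_2\cdot\mbu$ to get $\mbv_l\big|_{g_2\cdot\mbu}=\mbv_m\big|_{g_1g_2\cdot\mbu}\,\myAd(g_1)^{\,m}_{\,l}$. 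Substituting the second into the first and comparing with \eqref{addef} written for the product $g_1 g_2$ yields exactly the matrix product above. The one point that genuinely requires care, and which I expect to be the main obstacle, is precisely the re-evaluation in this second step: it relies on \eqref{addef} holding at the arbitrary base point $g_2\cdot\mbu$ with the \emph{same} matrix $\myAd(g_1)$, i.e.\ on the base-point independence of $\myAd$. Once that is granted, only routine index bookkeeping remains.

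With the homomorphism property in hand the corollary is immediate. Set $g_k$ to be the group element with $g_k\cdot\mbu=\exp(a_k^{r_k}\mbv_{r_k})\mbu$, so that $g=g_1 g_2\cdots g_M$. An $M$-fold application of the homomorphism property gives
\[
\myAd(g)=\myAd(g_1)\,\myAd(g_2)\cdots\myAd(g_M),
\]
and Theorem \ref{adthm} applied to each factor gives $\myAd(g_k)=\exp(a_k^{r_k}C_{r_k})$. Substituting these expressions completes the proof.

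An alternative route, which avoids invoking the homomorphism property as a black box, would be to mimic the proof of Theorem \ref{adthm} directly by differentiating $\prod_{k}\exp(\varepsilon_k a_k^{r_k}C_{r_k})$ in each parameter $\varepsilon_k$ in turn and matching against the conjugation-derivative computation there. This, however, essentially re-derives the homomorphism property one factor at a time and is less economical, so I would present the homomorphism argument as the main proof and relegate this variant to a remark.
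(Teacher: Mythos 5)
Your proposal is correct and follows essentially the same route as the paper, whose entire proof is the one-line assertion that the adjoint representation preserves left multiplication; you simply make explicit the verification of the homomorphism property $\myAd(g_1g_2)=\myAd(g_1)\myAd(g_2)$ from the defining identity before applying Theorem \ref{adthm} to each factor. Your careful check that the identity holds at the shifted base point $g_2\cdot\mbu$ with the same matrix is a welcome addition that the paper leaves implicit.
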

\begin{proof}
	The adjoint representation preserves left multiplication.
\end{proof}

For every $R$-dimensional local Lie group of transformations, the action of every $g$ in some neighbourhood of the identity can be written as $\exp(a^r\mbv_r)$ for some $a^r$, which are called canonical coordinates of the first kind. So if $\widetilde{\mbu}=g\cdot\mbu$ is written in this form, Theorem \ref{adthm} enables us to write down $\myAd(g)=\exp(a^rC_r)$ immediately.

Commonly, $\widetilde{\mbu}=g\cdot\mbu$ is written as a product of exponentiated generators, in which case Corollary \ref{adcor} is applicable. In particular, the action of every $g$ in some neighbourhood of the identity can be written as $\exp(a^R\mbv_R)\cdots\exp(a^1\mbv_1)$; here $a^r$ are canonical coordinates of the second kind.

\begin{example}(Example \ref{simpleExOne} cont.)\  
	The group action $g\cdot(x,u)=(\tilde{x},\tilde{u})$ in the running example can be written in terms of canonical coordinates of the second kind as
	\[
	(\tilde{x},\tilde{u})=\left(e^{3a^1}\!x+a^2,\,e^{a^1}\!u+a^3\right)=\exp(a^3\mbv_3)\exp(a^2\mbv_2)\exp(a^1\mbv_1)(x,u),
	\]
	where $(a^1,a^2,a^3)=(\ln(\lambda),a,b)$ and
	\[
	\mbv_1=3x\partial_{x}+u\partial_{u},\qquad\mbv_2=\partial_x,\qquad\mbv_3=\partial_u.
	\]
	The only nonzero structure constants are $c_{12}^2=-c_{21}^2=-3$ and $c_{13}^3=-c_{31}^3=-1$, so Corollary \ref{adcor} yields
	\[
	\myAd(g)=\exp\!\begin{pmatrix}0&\!0&\!0\\0&\!0&\!0\\-a^3&\!0&\!0\end{pmatrix}\!\exp\!\begin{pmatrix}\!0&\!0&\!0\\\!-3a^2&\!0&\!0\\\!0&\!0&\!0\end{pmatrix}\!\exp\!\begin{pmatrix}0&\!0&\!0\\0&\!3a^1&\!0\\0&\!0&\!a^1\end{pmatrix}\!=\!\begin{pmatrix}\!1&\!0&\!0\\\!-3a^2&\!e^{3a^1}&\!0\\\!-a^3&\!0&\!e^{a^1}\end{pmatrix}\!,
	\]
	in agreement with the more straightforward approach of Section \ref{moregrpsec}.
\end{example}
}}

\end{document}